  \newcounter{mnote}
  \let\oldmarginpar\marginpar
    \renewcommand\marginpar[1]{\-\oldmarginpar[\raggedleft\footnotesize #1]%
    {\raggedright\footnotesize #1}}
\newtheorem{theorem}{Theorem}[section]
\newtheorem{lemma}[theorem]{Lemma}
\newtheorem{corollary}[theorem]{Corollary}
\newtheorem{remark}[theorem]{Remark}
\newcommand{\bs}{\boldsymbol}
\renewcommand{\div}{\operatorname{div}}
\numberwithin{equation}{section}
\begin{document}
\title[$H^m$-nonconforming VEM]{Nonconforming Virtual Element Method for $2m$-th Order Partial Differential Equations in $\mathbb R^n$
 }
\author{Long Chen}%
\address{Department of Mathematics, University of California at Irvine, Irvine, CA 92697, USA}%
\email{chenlong@math.uci.edu}%
\author{Xuehai Huang}%
\address{School of Mathematics, Shanghai University of Finance and Economics, Shanghai 200433, China}%
\email{huang.xuehai@sufe.edu.cn}%

\thanks{The second author was supported by the National Natural Science Foundation of China Project 11771338, the Fundamental Research Funds for the Central
Universities 2019110066, and Zhejiang Provincial
Natural Science Foundation of China Project LY17A010010.}

\subjclass[2010]{
65N30;   %%  Finite elements, Rayleigh-Ritz and Galerkin methods, finite methods;
65N12;   %%  Stability and convergence of numerical methods;
65N22;   %%  Solution of discretized equations
}

\begin{abstract}
A unified construction of the $H^m$-nonconforming virtual elements of any order $k$ is developed on any shape of polytope in $\mathbb R^n$ with constraints $m\leq n$ and $k\geq m$.
As a vital tool in the construction, a generalized Green's identity for $H^m$ inner product is derived.
The $H^m$-nonconforming virtual element methods are then used to approximate
solutions of the $m$-harmonic equation.
After establishing a bound on the jump related to the weak continuity, the optimal error estimate of the canonical interpolation, and the norm equivalence of the stabilization term,
the optimal error estimates are derived for the $H^m$-nonconforming virtual element methods.
\end{abstract}
\maketitle

%\tableofcontents

\section{Introduction}

We intend to construct $H^m$-nonconforming virtual elements of order $k\in\mathbb N$ on a very general polytope $K\subset\mathbb R^n$ in any dimension and any order with constraints $m\leq n$ and $k\geq m$.
Since an $m$th order derivative of polynomial degree $m-1$ or less would be zero,
the constraint $k\geq m$ is required in constructing $H^m$-nonconforming or conforming virtual elements to ensure that the virtual element spaces possess meaningful approximation in $H^m$-seminorm. %When $m>n$, the generalized Green's identity for $H^m$ inner product, the key tool in this paper, will involve the derivative terms on zero-dimensional subsurfaces, i.e. nodes.
Due to a technical reason, we will restrict to the case $m\leq n$ in this paper and postpone the case $m>n$ in future works.
The virtual element was described as a generalization of the finite element on a general polytope in~\cite{BeiraoBrezziCangianiManziniEtAl2013,BeiraoBrezziMariniRusso2014}, thus it is helpful to recall the definition of the finite element first.

A finite element on $K$ was defined as a triple $(K, V_K, \mathcal N_K)$ in~\cite{Ciarlet1978}, where $V_K$ is the finite-dimensional space of shape functions, and $\mathcal N_K$ the set of degrees of freedom (d.o.f.). The set $\mathcal N_K$ forms a basis of $(V_K)'$ the dual space of the space of shape functions.
The shape functions of the finite element are usually polynomials, and their basis functions being dual to the degrees of freedom $\mathcal N_K$ have to be explicitly constructed for the implementation, which is painful for high order cases (either $k,m,$ or $n$ is large).

We can also represent the virtual element as a triple $(K, \mathcal N_K, V_K)$. Here we reorder $V_K$ and $\mathcal N_K$ to emphasize that the set of the degrees of freedom $\mathcal N_K$ is crucial in the construction of the virtual element, and the space of shape functions $V_K$ is virtual.
Indeed after having the degrees of freedom $\mathcal N_K$, we may attach different spaces. The space of shape functions $V_K$ is only required to include all
polynomials of the total degree up to $k$ for the approximation property. Different from the finite element, one advantage of the virtual element is that the basis functions of $V_k$ are not explicitly required in the implementation. When forming the linear system of the virtual element method, the computation of all the to-be-required quantities can be transferred to the computation using the degrees of freedom.
%, as a result we cannot even see the non-polynomial functions in $V_K$.

Construction of $H^m$-conforming or nonconforming elements is an active topic in the field of the finite element methods in recent years.
%\mnote{How about the case $n=1$? Add a sentence like: for $n=1$, it is spline.}
Some $H^m$-conforming finite elements with polynomial shape functions were designed on the simplices in~\cite{ArgyrisFriedScharpf1968,Zenisek1970,BrambleZlamal1970,AlfeldSirvent1991,Zhang2009a} and on the hyperrectangles in~\cite{Zhang2010,HuZhang2015a,HuHuangZhang2011}.
Recently an $H^m$-conforming virtual element for polyharmonic problems with arbitrary $m$ in two dimensions was introduced and
studied in \cite{AntoniettiManziniVerani2018a}.
%The $H^m$-conforming finite elements on the simplices are usually much more complicated than the nonconforming ones.
For general $m$, nonconforming elements on the simplices are easier to construct than conforming ones.
In~\cite{WangXu2013,WangXu2006}, Wang and Xu constructed the minimal $H^m$-nonconforming elements in any dimension with constraint $m\leq n$. Recently Wu and Xu extended these minimal $H^m$-nonconforming elements to $m=n+1$ by enriching the space of shape functions with bubble functions in~\cite{WuXu2019}, and to arbitrary $m$ and $n$ by using the interior penalty technique in~\cite{WuXu2017}. In two dimensions, Hu and Zhang designed the $H^m$-nonconforming elements on the triangle for any $m$ in~\cite{HuZhang2017}.
On the other hand, the $H^2$-conforming virtual element, the $C^{0}$-type $H^2$-nonconforming virtual element and the fully $H^2$-nonconforming virtual element on the polygon with any shape in two dimensions were developed in ~\cite{BrezziMarini2013},~\cite{ZhaoChenZhang2016} and ~\cite{AntoniettiManziniVerani2018,ZhaoZhangChenMao2018}, respectively.
In~\cite{Wang2019}, a nonconforming Crouzeix-Raviart type, i.e, $H^1$-nonconforming finite element was advanced on the polygon.

Although the $H^m$-conforming virtual element has been devised for $n=2$ in \cite{AntoniettiManziniVerani2018a} for arbitrary $m$, generalization to dimension $n>2$ seems nontrivial. While the $H^m$-nonconforming virtual element can be constructed in a universal way for all $n\geq m$ and allows unified error analysis.

We shall construct the $H^m$-nonconforming virtual element in any order on the polytope with any shape in any dimension (with constraints $k\geq m$ and $m\leq n$). The vital tool is the following generalized Green's identity for the $H^m$ space
\begin{align}
(\nabla^mu, \nabla^mv)_K = &((-\Delta)^mu, v)_K \notag\\
&+ \sum_{j=1}^m\sum_{F\in\mathcal F^j(K)}\sum_{\alpha\in A_{j}\atop|\alpha|\leq m-j}\Big (D^{2m-j-|\alpha|}_{F, \alpha}(u), \frac{\partial^{|\alpha|}v}{\partial\nu_{F}^{\alpha}}\Big )_F, \label{eq:HmGreenIntro}
\end{align}
which is proved by the mathematical induction and integration by parts.
Here $\mathcal F^j(K)$ is the set of all $(n-j)$-dimensional faces
of the polytope $K$, $A_{j}$ the set consisting of all $n$-dimensional multi-indexes $\alpha=(\alpha_1, \cdots, \alpha_n)$ with $\alpha_{j+1}=\cdots=\alpha_n=0$,  $D^{2m-j-|\alpha|}_{F, \alpha}(u)$ some $(2m-j-|\alpha|)$-th order derivatives of $u$ on $F$, and $\frac{\partial^{|\alpha|}v}{\partial\nu_{F}^{\alpha}}$ the multi-indexed normal derivatives on $F$.

Imagining $u$ in the Green's identity~\eqref{eq:HmGreenIntro} as a polynomial of degree $k$ temporarily, we acquire the degrees of freedom $\mathcal N_k(K)$ from the right hand side of the Green's identity~\eqref{eq:HmGreenIntro}. And the space $V_k(K)$ of shape functions is defined inherently by requiring the first terms in the inner product to be in polynomial spaces. Namely the right hand side of~\eqref{eq:HmGreenIntro} provides a natural duality of $V_k(K)$ and $\mathcal N_k(K)$. As a result we construct the fully $H^m$-nonconforming virtual element $(K, \mathcal N_k(K), V_k(K))$ completely based on the Green's identity~\eqref{eq:HmGreenIntro}.
If $K$ is a simplex and $k=m$, the virtual element $(K, \mathcal N_k(K), V_k(K))$ is reduced to the nonconforming finite element in~\cite{WangXu2013}, hence we generalize the nonconforming finite element in~\cite{WangXu2013} to high order $k>m$ and arbitrary polytopes.
In two dimensions, we also recover the fully $H^2$-nonconforming virtual element in~\cite{AntoniettiManziniVerani2018,ZhaoZhangChenMao2018}.

After introducing the local $H^m$ projection $\Pi^K$ and a stabilization term using d.o.f., we propose $H^m$-nonconforming virtual element methods for solving the $m$-harmonic equation. %Under the assumption that the inverse inequality, the norm equivalent of the stabilization term, and the optimal interpolation error estimates hold,
We assume the mesh $\mathcal T_h$ admits a virtual quasi-uniform triangulation, and each element in $\mathcal T_h$ is star-shaped.
A bound on the jump $\llbracket\nabla_h^sv_h\rrbracket$ is derived using the weak continuity and the trace inequality, with which we show the discrete Poincar\'e inequality for the global virtual element space.
The optimal error estimate of the canonical interpolation $I_hu$ is achieved
after establishing a Galerkin orthogonality of $u - I_hu$.
By employing the bubble function technique which was frequently used in proving the efficiency of the a posteriori error estimators, the inverse inequality for polynomials, the generalized Green's identity and the trace inequality, we acquire the norm equivalence of the standard stabilization using $l^2$ inner products of degree of freedoms on $\ker(\Pi^K)$.
The optimal error estimates are derived for the $H^m$-nonconforming virtual element methods by further estimate the consistency error.

The shape functions of the virtual element spaces are not explicitly
known; in particular, the output of the method is a vector of degrees of freedom and not
an explicit function. In order to represent explicitly the solution, one employs
some suitable polynomial projector, which is typically piecewise defined and discontinuous over the polytopal decomposition.
%, and only approximates the virtual solution.
%For this reason, it may
%appear that constructing $H^m$-nonconforming spaces has somehow a
%limited scope, since at the end of the day the ``real'' output, i.e., the projected version of the
%solution, is not even $H^1$-nonconforming.
%However, as mentioned in Remarks~\ref{rmk:mwx} and \ref{rmk:wx}, the polynomial projector is exactly $H^m$-nonconforming when the element $K$ is a simplex and $k=m$, which implies the piecewise-defined polynomial projector actually possesses some continuity.
However, since the degrees of freedom in the interior of each element for the virtual elements can be eliminated by the static condensation,
similarly as the hybridizable discontinuous Galerkin methods,
the virtual element methods possess fewer globally decoupled degrees of freedom than the usual discontinuous Galerkin methods. Furthermore, the nonconforming virtual element can be constructed in a universal way which allows unified error analysis and is employed for theoretical purposes, independently of the way one wants to represent the solution.

The rest of this paper is organized as follows. In Section 2, we present some notations and the construction of the fully $H^1$- and $H^2$-nonconforming virtual elements. The general fully $H^m$-nonconforming virtual element is designed in Section 3. The corresponding $H^m$-nonconforming virtual element method and its error estimate are shown in Section 4 and Section 5 respectively. A conclusion is given in Section 6. In the Appendix, we prove the norm equivalence and give a remark on the implementation.

\section{Preliminaries}

\subsection{Notation}
Assume that $\Omega\subset \mathbb{R}^n~(n\geq 2)$ is a bounded
polytope.
For any nonnegative integer $r$ and $1\leq\ell\leq n$, denote the set of $r$-tensor spaces over $\mathbb R^{\ell}$ by $\mathbb T_{\ell}(r):=(\mathbb R^{\ell})^r=\prod_{j=1}^{r}\mathbb R^{\ell}$.
Given a bounded domain $K\subset\mathbb{R}^{n}$ and a
non-negative integer $k$, let $H^k(K; \mathbb T_{\ell}(r))$ be the usual Sobolev space of functions
over $K$ taking values in the tensor space $\mathbb T_{\ell}(r)$. The corresponding norm and semi-norm are denoted respectively by
$\Vert\cdot\Vert_{k,K}$ and $|\cdot|_{k,K}$. It is customary to rewrite $H^k(K; \mathbb T_{\ell}(0))$ as $H^k(K)$.
For any $F\subset\partial K$,
denote by $\nu_{K, F}$ the
unit outward normal to $\partial K$. Without causing any confusion, we will abbreviate $\nu_{K, F}$ as $\nu$ for simplicity.
Define $H_0^k(K)$ as the closure of $C_{0}^{\infty}(K)$ with
respect to the norm $\Vert\cdot\Vert_{k,K}$, i.e. (cf. \cite[Theorem~5.37]{AdamsFournier2003}),
\[
H_0^k(K):=\left\{v\in H^k(K): v=\frac{\partial v}{\partial \nu}=\cdots=\frac{\partial^{k-1} v}{\partial \nu^{k-1}}=0 \quad\text{on
}\partial K\right\},
\]
and define $H_0^1(K; \mathbb T_{\ell}(r))$ in a similar way.
Let $(\cdot, \cdot)_K$ be the standard inner product on $L^2(K; \mathbb T_{\ell}(r))$. If $K$ is $\Omega$, we abbreviate
$\Vert\cdot\Vert_{k, K}$, $|\cdot|_{k, K}$ and $(\cdot, \cdot)_K$ by $\Vert\cdot\Vert_{k}$, $|\cdot|_{k}$ and $(\cdot, \cdot)$,
respectively.
Notation $\mathbb P_k(K)$ stands for the set of all
polynomials over $K$ with the total degree no more than $k$.
And denote by $\mathbb P_{k}(K; \mathbb T_{\ell}(r))$ the tensorial version space of $\mathbb P_{k}(K)$.
Let $Q_k^{K}$ be the $L^2$-orthogonal projection onto $\mathbb P_k(K; \mathbb T_{\ell}(r))$.

For an $n$-dimensional multi-index $\alpha = (\alpha_1, \cdots , \alpha_n)$ with $\alpha_i\in\mathbb Z^+\cup\{0\}$, define $|\alpha|:=\sum_{i=1}^n\alpha_i$.
%We will use $\alpha, \beta$ to denote the multi-indexes.
For $0\leq \ell \leq n$, let $A_{\ell}$ be the set consisting of all multi-indexes $\alpha$ with $\sum_{i=\ell+1}^n\alpha_i=0$, i.e., non-zero index only exists for $1\leq i\leq l$.
For any non-negative integer $k$, define the scaled monomial $\mathbb M_k(K)$ on an $\ell$-dimensional domain $K$
\[
\mathbb M_k(K):=\left\{\Big (\frac{\bs x-\bs x_K}{h_K}\Big )^{\alpha}, \alpha\in A_{\ell}, |\alpha|\leq k\right\},
\]
where $h_K$ is the diameter of $K$ and $\bs x_K$ is the centroid of $K$. And $\mathbb M_k(K):=\emptyset$ whenever $k<0$.

Let $\{\mathcal {T}_h\}$ be a regular family of partitions
of $\Omega$ into nonoverlapping simple polytopal elements with $h:=\max_{K\in \mathcal {T}_h}h_K$.
%and $h_K:=\mbox{diam}(K)$.
Let $\mathcal{F}_h^r$ be the set of all $(n-r)$-dimensional faces
of the partition $\mathcal {T}_h$ for $r=1, 2, \cdots, n$, and its boundary part
\[
\mathcal{F}_h^{r, \partial}:=\{F\in\mathcal{F}_h^r: F\subset\partial\Omega\},
\]
and interior part $\mathcal{F}_h^{r, i}:=\mathcal{F}_h^{r}\backslash \mathcal{F}_h^{r, \partial}$.
Moreover, we set for each $K\in\mathcal{T}_h$
\[
\mathcal{F}^r(K):=\{F\in\mathcal{F}_h^r: F\subset\partial K\}.%, \quad \mathcal{E}_h^i(K):=\{e\in\mathcal{E}_h^i: e\subset\partial K\},
\]
The supscript $r$ in $\mathcal{F}_h^r$ represents the co-dimension of an $(n-r)$-dimensional face $F$ as we shall show later the degree of freedom will be associated to the $r$-normal vectors of $F$. Similarly, for $F\in\mathcal{F}_h^r$ and $j=0, 1, \cdots, n-r$ with $r=1, 2, \cdots, n$, we define
\[
\mathcal F^j(F):=\{e\in\mathcal{F}_h^{r+j}: e\subset\overline{F}\}.
\]
Here $j$ is the co-dimension relative to the face $F$. Apparently $\mathcal F^0(F)=F$.

For any $F\in\mathcal{F}_h^r$, let $\nu_{F,1}, \cdots, \nu_{F,r}$ be its
%linearly independent
mutually perpendicular unit normal vectors, and define the surface gradient on $F$ as
\begin{equation}\label{eq:surfacegrad}
\nabla_{F}v:=\nabla v-\sum_{i=1}^r\frac{\partial v}{\partial\nu_{F,i}}\nu_{F,i},
\end{equation}
namely the projection of $\nabla v$ to the face $F$, which is independent of the choice of the normal vectors.
When $v$ is a tensor, the surface gradient $\nabla_{F}v$ is defined element-wisely in convention, which is a one-order higher tensor.
And denote by $\div_{F}$ the corresponding surface divergence.
For any $F\in\mathcal F^{r}_h$ and $\alpha\in A_{r}$ for $r=1,\cdots, m$, set
\[
\frac{\partial^{|\alpha|}v}{\partial\nu_{F}^{\alpha}}:=\frac{\partial^{|\alpha|}v}{\partial\nu_{F, 1}^{\alpha_1}\cdots\partial\nu_{F, r}^{\alpha_{r}}}.
\]

For any $(n-2)$-dimensional face $e\in\mathcal{F}_h^{2}$,  denote
\[
\partial^{-1}e :=\{F\in\mathcal{F}_h^{1}: e\subset \partial F\}.
\]
Similarly for any $(n-1)$-dimensional face $F\in\mathcal{F}_h^{1}$,  let
\[
\partial^{-1}F :=\{K\in\mathcal{T}_h: F\in \mathcal{F}^1(K)\}.
\]
%For any element $K\in\mathcal{T}_h$, set the patch
%\[
%\omega_K:=\textrm{interior}\Big (\bigcup\{\overline{K'}\in\mathcal T_h: \overline{K'}\cap\overline{K}\neq\varnothing\}\Big ).
%\]
For non-negative integers $m$ and $k$, let
\[
H^m(\mathcal T_h):=\{v\in L^2(\Omega): v|_K\in H^m(K)\textrm{ for each } K\in\mathcal T_h\},
\]
\[
\mathbb P_k(\mathcal T_h):=\{v\in L^2(\Omega): v|_K\in \mathbb P_k(K)\textrm{ for each } K\in\mathcal T_h\}.
\]
For a function $v\in H^m(\mathcal T_h)$, equip the usual broken $H^m$-type norm and semi-norm
\[
\|v\|_{m,h}:=\Big (\sum_{K\in\mathcal T_h}\|v\|_{m,K}^2\Big )^{1/2},\quad |v|_{m,h}:=\Big (\sum_{K\in\mathcal T_h}|v|_{m,K}^2\Big )^{1/2}.
\]

We introduce jumps on ($n-1$)-dimensional faces.
Consider two adjacent elements $K^+$ and $K^-$ sharing an interior ($n-1$)-dimensional face $F$.
Denote by $\nu^+$ and $\nu^-$ the unit outward normals
to the common face $F$ of the elements $K^+$ and $K^-$, respectively.
For a scalar-valued or tensor-valued function $v$, write $v^+:=v|_{K^+}$ and $v^-
:=v|_{K^-}$.   Then define the jump on $F$ as
follows:
\[
\llbracket v\rrbracket:=v^+\nu_{F,1}\cdot \nu^++v^-\nu_{F,1}\cdot \nu^-.
\]
On a face $F$ lying on the boundary $\partial\Omega$, the above term is
defined by $\llbracket v\rrbracket
   :=v\nu_{F,1}\cdot \nu.$

Throughout this paper, we also use
``$\lesssim\cdots $" to mean that ``$\leq C\cdots$", where
$C$ is a generic positive constant independent of mesh size $h$, but may depend on the chunkiness parameter of the polytope, the degree of polynomials $k$, the order of differentiation $m$ and the dimension of space $n$,
which may take different values at different appearances. And $A\eqsim B$ means $A\lesssim B$ and $B\lesssim A$.
Hereafter, we always assume $k\geq m$.

We summarize important notation in the following tables.
\begin{table}[htp]
\caption{Notation of the mesh, elements, and faces.}
\begin{center}
\begin{tabular}{|c|c|c|c|c|}
\hline
$m$ & order of differentiation $H^m$ & $n$ & dimension of space $\mathbb R^n$ & $m\leq n, k\geq m$ \\
$k$ & degree of polynomial $\mathbb P_k$ &  $r$ & co-dimension of a face & $0\leq r\leq n$\\
$\mathcal T_h$ & a mesh of $\Omega$ & $K$ & a polytope element & $K\in \mathcal T_h$\\
$\mathcal F_h^r$ & $(n-r)$-dimensional face & $F$ & a typical face & $F\in \mathcal F_h^r$\\
$\partial^{-1}e$ & all faces surrounding $e$ & $\partial^{-1}F$ & elements containing $F$ & $e\in \mathcal F_h^2, F\in \mathcal F_h^1$\\
\hline
\end{tabular}
\end{center}
\label{default1}
\end{table}%

\begin{table}[htp]
\caption{Notation for differentiation}
\begin{center}
\begin{tabular}{|c|c|}
\hline
$\alpha = (\alpha_1, \alpha_2, \ldots, \alpha_n)$ & an $n$-dimensional multi-index \\
$A_r$ set of multi-index & $\alpha = (\alpha_1, \ldots, \alpha_r, 0, \ldots, 0)$ for $\alpha \in A_r$\\
$\nu_{F,1}, \cdots, \nu_{F,r}$ & $r$ linearly independent unit normal vectors for $F\in\mathcal F_h^r$\\
$\displaystyle \nabla_{F}v:=\nabla v-\sum_{i=1}^r\frac{\partial v}{\partial\nu_{F,i}}\nu_{F,i}$ & surface gradient on $F$\\
$D^j_{F,\alpha}(v)$ & a $j$-th order derivative of $v$ on $F$\\
$\displaystyle \frac{\partial^{|\alpha|}v}{\partial\nu_{F}^{\alpha}}:=\frac{\partial^{|\alpha|}v}{\partial\nu_{F, 1}^{\alpha_1}\cdots\partial\nu_{F, r}^{\alpha_{r}}}.
$ & a multi-indexed normal derivative on $F$\\
\hline
\end{tabular}
\end{center}
\label{default2}
\end{table}%

\subsection{$H^1$-nonconforming virtual element}\label{sec:H1nvem}
To drive the $H^m$-nonconforming virtual element in a unified framework, we first revisit the simplest case for the purpose of discovering the underlying mechanism.
Taking any $K\in\mathcal T_h$, let $u\in H^2(K)$ and $v\in H^1(K)$.
Applying the integration by parts, it holds
\begin{equation}\label{eq:H1Green}
(\nabla u, \nabla v)_K=-(\Delta u, v)_K +\sum_{F\in \mathcal F^{1}(K)}(\frac{\partial u}{\partial\nu_{K,F}}, v)_F.
\end{equation}
Imaging $u\in \mathbb P_k(K)$, we are inspired by the Green's identity~\eqref{eq:H1Green} to advance the following local degrees of freedom (dofs) $\mathcal N_k(K)$ of the $H^1$ nonconforming virtual element:
\begin{align}
\frac{1}{|F|}(v, q)_F & \quad\forall~q\in\mathbb M_{k-1}(F) \textrm{ on each }  F\in\mathcal F^{1}(K), \label{H1dof1}\\
\frac{1}{|K|}(v, q)_K & \quad\forall~q\in\mathbb M_{k-2}(K). \label{H1dof2}
\end{align}
%According to the first terms in the inner products of the right hand side of~\eqref{eq:H1Green}, and the degrees of freedom~\eqref{H1dof1}-\eqref{H1dof2} and their unisovlence, it is spontaneous to define
The local space of the $H^1$-nonconforming virtual element is
\[
V_k(K):=\left\{u\in H^1(K): \Delta u\in \mathbb P_{k-2}(K), \;\frac{\partial u}{\partial\nu_{K,F}}|_F\in\mathbb P_{k-1}(F) \quad\forall~F\in\mathcal F^{1}(K)\right\}
\]
for $k\geq 1$.
%Here we have used the fact that $\nu_{F,1}=\nu_{K,F}$ or $\nu_{F,1}=-\nu_{K,F}$.
This is the $H^1$-nonconforming  virtual element constructed in~\cite{AyusodeDiosLipnikovManzini2016}; see also~\cite{Liu;Li;Chen:2017nonconforming}.

\subsection{$H^2$-nonconforming virtual element}\label{sec:H2nvem}
Then we consider the case $m=2$.
For each $F\in\mathcal F^1(K)$ and any function $v\in H^4(K)$, set
\begin{align*}
M_{\nu\nu}(v) &:=\nu_{F,1}^{\intercal}(\nabla^2v)\nu_{K,F}, \\
M_{\nu t}(v) &:=(\nabla^2v)\nu_{K,F} - M_{\nu\nu}(v)\nu_{F,1},\\
Q_{\nu}(v) &:=\nu_{K,F}^{\intercal}\div(\nabla^2v) + \div_{F}M_{\nu t}(v).
\end{align*}
In two dimensions, $M_{\nu\nu}(v)$, $M_{\nu t}(v)$ and $Q_{\nu}(v)$ are called normal bending moment, twisting moment and effective transverse shear force respectively when $v$ is the deflection of a thin plate in the context of elastic mechanics~\cite{FengShi1996,Reddy2006}.

\begin{lemma}%\label{lem:ibp}
For any $u\in H^4(K)$ and $v\in H^2(K)$, it holds
\begin{align}
(\nabla^2u, \nabla^2v)_K &= (\Delta^2u, v)_K + \sum_{F\in\mathcal F^{1}(K)}\Big [(M_{\nu\nu}(u), \frac{\partial v}{\partial\nu_{F,1}})_{F} - (Q_{\nu}(u), v)_{F}\Big ] \notag \\
&\qquad+\sum_{e\in\mathcal F^{2}(K)}\sum_{F\in\mathcal F^{1}(K)\cap \partial^{-1}e}(\nu_{F,e}^{\intercal}M_{\nu t}(u), v)_{e}. \label{eq:H2Green}
\end{align}
\end{lemma}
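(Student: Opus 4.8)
The plan is to establish the identity first for $u,v\in C^\infty(\overline K)$ and then recover the stated generality $u\in H^4(K)$, $v\in H^2(K)$ by a density argument, once one checks that every term on the right-hand side is well defined: in particular $M_{\nu\nu}(u)$, $M_{\nu t}(u)$ and $Q_{\nu}(u)$ involve at most third-order derivatives of $u$ restricted to faces of codimension one, which belong to $L^2$ of those faces by the trace theorem, using that each face of $K$ is flat.

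For smooth functions, first I would integrate by parts twice in the interior. Writing $\div$ for the divergence contracting the last index of a tensor, one has $\div(\nabla^2 u)=\nabla(\Delta u)$, so that
\begin{align*}
(\nabla^2 u, \nabla^2 v)_K
&= -(\div(\nabla^2 u), \nabla v)_K + \sum_{F\in\mathcal F^1(K)}\big((\nabla^2 u)\nu_{K,F}, \nabla v\big)_F\\
&= (\Delta^2 u, v)_K - \sum_{F\in\mathcal F^1(K)}\big(\nu_{K,F}^{\intercal}\div(\nabla^2 u), v\big)_F + \sum_{F\in\mathcal F^1(K)}\big((\nabla^2 u)\nu_{K,F}, \nabla v\big)_F,
\end{align*}
using $\nu_{K,F}^{\intercal}\div(\nabla^2 u)=\partial(\Delta u)/\partial\nu_{K,F}$ in the second step.

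Next, on each $(n-1)$-dimensional face $F$ I would split $\nabla v=\nabla_F v+\frac{\partial v}{\partial\nu_{F,1}}\nu_{F,1}$ and $(\nabla^2 u)\nu_{K,F}=M_{\nu\nu}(u)\nu_{F,1}+M_{\nu t}(u)$, the second being precisely the decomposition of $(\nabla^2 u)\nu_{K,F}$ into its normal and tangential parts relative to $F$. Since $M_{\nu t}(u)$ is tangential to $F$ and $\nabla_F v$ has no normal component, this gives $\big((\nabla^2 u)\nu_{K,F}, \nabla v\big)_F=\big(M_{\nu\nu}(u), \frac{\partial v}{\partial\nu_{F,1}}\big)_F+\big(M_{\nu t}(u), \nabla_F v\big)_F$. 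Then, integrating by parts on the flat face $F$ viewed as a domain in an $(n-1)$-dimensional affine subspace, the surface Green's formula for the tangential field $M_{\nu t}(u)$ yields $\big(M_{\nu t}(u), \nabla_F v\big)_F=-\big(\div_F M_{\nu t}(u), v\big)_F+\sum_{e\in\mathcal F^1(F)}\big(\nu_{F,e}^{\intercal}M_{\nu t}(u), v\big)_e$, where $\nu_{F,e}$ is the unit conormal of $e$ in the affine hull of $F$ pointing out of $F$. Collecting the two terms carried by $v$ on $F$ and invoking the definition $Q_{\nu}(u)=\nu_{K,F}^{\intercal}\div(\nabla^2 u)+\div_F M_{\nu t}(u)$ turns them into $-(Q_{\nu}(u), v)_F$; exchanging the order of summation, $\sum_{F\in\mathcal F^1(K)}\sum_{e\in\mathcal F^1(F)}=\sum_{e\in\mathcal F^2(K)}\sum_{F\in\mathcal F^1(K)\cap\partial^{-1}e}$, produces the edge contribution, and the proof is complete.

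The step requiring the most care is the surface integration by parts on $F$: one should check that $M_{\nu t}(u)$ is genuinely a tangential vector field regular enough on $F$ for $\div_F M_{\nu t}(u)$ to lie in $L^2(F)$ and for the traces on the $(n-2)$-dimensional faces $e$ to be meaningful, and that the conormals $\nu_{F,e}$ are chosen consistently so that the contributions of a given $e$ coming from the various $F\in\mathcal F^1(K)\cap\partial^{-1}e$ enter with the correct signs. The flatness of the faces of the polytope is exactly what lets $\nabla_F$ and $\div_F$ be the ordinary Euclidean operators on the affine hull of $F$, so that no curvature terms appear; this is also what makes the induction on $m$ go through in the general $H^m$ case.
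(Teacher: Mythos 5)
Your proposal is correct and follows essentially the same route as the paper: integrate by parts in the bulk, split $(\nabla^2u)\nu_{K,F}$ and $\nabla v$ into normal and tangential parts on each face, integrate by parts along $F$ to produce $\div_F M_{\nu t}(u)$ and the edge terms, and regroup via the definition of $Q_\nu(u)$ and the exchange of summation. The only addition is your explicit density/trace discussion, which the paper leaves implicit.
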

\begin{proof}
Using integration by parts,  we get
\[
(\div(\nabla^2u), \nabla v)_K=-(\Delta^2u, v)_K + \sum_{F\in \mathcal F^{1}(K)}(\nu_{K,F}^{\intercal}\div(\nabla^2u), v)_{F},
\]
and for each $F\in\mathcal F^1(K)$,
\[
(M_{\nu t}(u), \nabla_{F}v)_{F}=-(\div_{F}M_{\nu t}(u), v)_{F} + \sum_{e\in \mathcal F^{1}(F)}(\nu_{F,e}^{\intercal}M_{\nu t}(u), v)_{e}.
\]
Then we acquire from the last two identities and
splitting the gradient into the tangential and normal components
\begin{align*}
(\nabla^2u, \nabla^2v)_K &= -(\div(\nabla^2u), \nabla v)_K+\sum_{F\in\mathcal F^{1}(K)}((\nabla^2u)\nu_{K,F}, \nabla v)_{F} \\
 &= -(\div(\nabla^2u), \nabla v)_K + \sum_{F\in\mathcal F^{1}(K)}(M_{\nu\nu}(u), \frac{\partial v}{\partial\nu_{F,1}})_{F} \\
 &\quad\;+ \sum_{F\in\mathcal F^{1}(K)}(M_{\nu t}(u), \nabla_{F}v)_{F} \\
 &= (\Delta^2u, v)_K + \sum_{F\in\mathcal F^{1}(K)}\Big [ (M_{\nu\nu}(u), \frac{\partial v}{\partial\nu_{F,1}})_{F} - (Q_{\nu}(u), v)_{F}\Big ] \\
&\quad+\sum_{F\in\mathcal F^{1}(K)}\sum_{e\in \mathcal F^{1}(F)}(\nu_{F,e}^{\intercal}M_{\nu t}(u), v)_{e},
\end{align*}
which ends the proof.
\end{proof}

Inspired by the Green's identity~\eqref{eq:H2Green}, for any element $K\in\mathcal T_h$ and integer $k\geq 2$,
the local degrees of freedom $\mathcal N_k(K)$ of the $H^2$ nonconforming virtual element are given as follows:
\begin{align}
\frac{1}{|K|}(v, q)_K & \quad\forall~q\in\mathbb M_{k-4}(K), \label{H2dof4}\\
\frac{1}{|F|}(v, q)_F & \quad\forall~q\in\mathbb M_{k-3}(F) \textrm{ on each }  F\in\mathcal F^{1}(K), \label{H2dof2}\\
\frac{1}{|F|^{(n-2)/(n-1)}}(\frac{\partial v}{\partial\nu_{F,1}}, q)_F & \quad\forall~q\in\mathbb M_{k-2}(F) \textrm{ on each }  F\in\mathcal F^{1}(K), \label{H2dof3}\\
\frac{1}{|e|}(v, q)_e & \quad\forall~q\in\mathbb M_{k-2}(e) \textrm{ on each } e\in\mathcal F^{2}(K). \label{H2dof1}
\end{align}
%\begin{itemize}
%\item $\frac{1}{|e|}(v, q)_e$ for any $q\in\mathbb M_{k-2}(e)$ and any $(d-2)$-dimensional face $e\in\mathcal F_h^{d-2}(K)$;
%\item $\frac{1}{|F|}(v, q)_F$ for any $q\in\mathbb M_{k-3}(F)$ and any $(d-1)$-dimensional face $F\in\mathcal F_h^{d-1}(K)$;
%\item $(\partial_nv, q)_F$ for any $q\in\mathbb M_{k-2}(F)$ and any $(d-1)$-dimensional face $F\in\mathcal F_h^{d-1}(K)$;
%\item $\frac{1}{|K|}(v, q)_K$ for any $q\in\mathbb M_{k-4}(K)$.
%\end{itemize}
%Again due to the first terms in the inner products of the right hand side of~\eqref{eq:H2Green}, and the degrees of freedom~\eqref{H2dof1}-\eqref{H2dof4}, it is inherent to define
The local space of the $H^2$ nonconforming virtual element is
\begin{align*}
V_k(K)&:=\{ u \in H^2(K): \Delta^2u \in \mathbb P_{k-4}(K), M_{\nu\nu}(u)|_F\in\mathbb P_{k-2}(F), Q_{\nu}(u)|_F\in\mathbb P_{k-3}(F), \\
&\qquad\qquad\;\; \sum_{F\in\mathcal F^{1}(K)\cap \partial^{-1}e}\nu_{F,e}^{\intercal}M_{\nu t}(u)|_e\in\mathbb P_{k-2}(e)\quad\forall~F\in\mathcal F^{1}(K), e\in\mathcal F^{2}(K)\}.
\end{align*}

\begin{remark}\rm
In two dimensions, the degrees of freedom~\eqref{H2dof1} will be reduced to the function values on the vertices of $K$. Then the virtual element $(K, \mathcal N_k(K), V_k(K))$ is the same as that in~\cite{AntoniettiManziniVerani2018,ZhaoZhangChenMao2018}.
\end{remark}

\begin{remark}\label{rmk:mwx}\rm
If the element $K\in \mathcal T_h$ is a simplex and $k=2$,
the degrees of freedom~\eqref{H2dof4}-\eqref{H2dof2} disappear, and the degrees of freedom~\eqref{H2dof3}-\eqref{H2dof1} are the same as the Morley-Wang-Xu element's degrees of freedom in~\cite{WangXu2006}. Indeed the virtual element $(K, \mathcal N_k(K), V_k(K))$ coincides with the Morley-Wang-Xu element in~\cite{WangXu2006} when $k=2$ and $K$ is a simplex.
\end{remark}

\section{$H^m$-nonconforming Virtual Element with $1\leq m\leq n$}

In this section, we will construct the $H^m$-nonconforming virtual element. It has been illustrated in Sections~\ref{sec:H1nvem}-\ref{sec:H2nvem} that the Green's identity plays a vital role in deriving the $H^1$ and $H^2$ nonconforming virtual elements. To this end, we shall derive a generalized Green's identity for the $H^m$ space first.
%,
%which will hint us how to construct the $H^m$-nonconforming virtual element.
\subsection{Generalized Green's identity}
For any scalar or tensor-valued smooth function $v$, nonnegative integer $j$, $F\in \mathcal{F}_h^r$ with $1\leq r\leq n$, and $\alpha\in A_{r}$, we use $D^j_{F, \alpha}(v)$ to denote some $j$-th order derivative of $v$ restrict on $F$, which may take different expressions at different appearances.

\begin{lemma}\label{lem:20190605-1}%\label{lem:ibp}
%Let $K\in\mathcal T_h$, positive integer $s\leq m-1$ and $F\in \mathcal F^r(K)$ with $1\leq r\leq n-s$.
Let $K\in\mathcal T_h$, $F\in \mathcal F^r(K)$ with $1\leq r\leq n-1$, and $s$ be a positive integer satisfying $s\leq n-r$.
There exist differential operators $D^{s-j-|\alpha|}_{e, \alpha}$ for $j=0,\cdots, s$, $e\in \mathcal F^j(F)$ and $\alpha\in A_{r+j}$ with $|\alpha|\leq s-j$ such that for any $\tau\in H^{s}(F; \mathbb T_{n}(s))$ and $(\nabla^sv)|_F\in L^2(F; \mathbb T_{n}(s))$, it holds
\begin{equation}\label{eq:HmfaceGreen}
(\tau, \nabla^sv)_F = \sum_{j=0}^s\sum_{e\in\mathcal F^j(F)}\sum_{\alpha\in A_{r+j}\atop|\alpha|\leq s-j}\Big ( D^{s-j-|\alpha|}_{e, \alpha}(\tau), \;\frac{\partial^{|\alpha|}v}{\partial\nu_{e}^{\alpha}}\Big )_e.
\end{equation}
\end{lemma}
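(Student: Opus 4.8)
The plan is to prove \eqref{eq:HmfaceGreen} by induction on $s$, mirroring the two computations already carried out for $m=1,2$ but now localized to a face $F\in\mathcal F^r(K)$, where the ambient "space" is the $(n-r)$-dimensional affine hull of $F$ and the relevant gradient is the surface gradient $\nabla_F$. For the base case $s=1$, I would split $\nabla v$ on $F$ into its tangential and normal parts, $\nabla v = \nabla_F v + \sum_{i=1}^{r}\frac{\partial v}{\partial\nu_{F,i}}\nu_{F,i}$, contract with $\tau\in H^1(F;\mathbb T_n(1))$, and apply the surface divergence theorem on $F$: the tangential term $(\tau^{\mathrm{tang}},\nabla_F v)_F = -(\div_F\tau^{\mathrm{tang}},v)_F + \sum_{e\in\mathcal F^1(F)}(\nu_{F,e}^{\intercal}\tau^{\mathrm{tang}},v)_e$, which contributes the $j=0,|\alpha|=0$ term $D^0_{F,0}(\tau)=-\div_F\tau^{\mathrm{tang}}$ on $F$ and the $j=1,|\alpha|=0$ boundary terms $D^0_{e,0}(\tau)=\nu_{F,e}^{\intercal}\tau^{\mathrm{tang}}$ on each $e\in\mathcal F^1(F)$; the normal terms contribute directly the $j=0,|\alpha|=1$ pieces $D^0_{F,e_i}(\tau) = \tau\cdot\nu_{F,i}$ paired with $\partial v/\partial\nu_{F,i}$. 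This reproduces the structure of the right-hand side of \eqref{eq:HmfaceGreen} for $s=1$.

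For the inductive step, assume the identity holds for $s-1$ on every face of every codimension. Given $\tau\in H^s(F;\mathbb T_n(s))$, write $\nabla^s v = \nabla(\nabla^{s-1}v)$ and first contract the outermost slot: as in the $m=2$ proof I would use that on $F$ the full gradient of the tensor $\nabla^{s-1}v$ decomposes into its surface gradient plus normal derivatives in the directions $\nu_{F,1},\dots,\nu_{F,r}$. Contracting $\tau$ with the normal parts produces, for each $i$, an inner product of the form $(\,\text{(some component of }\tau),\ \partial(\nabla^{s-1}v)/\partial\nu_{F,i})_F$, i.e. a pairing of a $0$-th order "derivative" of $\tau$ with $\nabla^{s-1}(\partial v/\partial\nu_{F,i})$ up to lower-order commutator terms — these are handled by applying the inductive hypothesis with $s-1$ and the test function $\partial v/\partial\nu_{F,i}$, bumping every multi-index $\alpha$ by $e_i$. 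Contracting $\tau$ with the surface-gradient part and integrating by parts on $F$ (surface divergence theorem again) yields a volume term $(-\div_F\tau, \nabla^{s-1}v)_F$, to which the inductive hypothesis applies directly with test function $v$, plus codimension-one boundary terms $\sum_{e\in\mathcal F^1(F)}(\nu_{F,e}^{\intercal}\tau,\nabla^{s-1}v)_e$; on each such $e$ (which has codimension $r+1$ in $\mathbb R^n$ and dimension $n-r-1$) I apply the inductive hypothesis with $s-1$, reindexing $\mathcal F^j(e)=\mathcal F^{j+1}(F)$. Collecting all contributions and relabeling $j\mapsto j$ or $j\mapsto j+1$ as appropriate, and absorbing every commutator / lower-order term into the operators $D^{s-j-|\alpha|}_{e,\alpha}$ (which, by the statement, are only required to be \emph{some} derivatives of the stated order), gives exactly the right-hand side of \eqref{eq:HmfaceGreen}.

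The main obstacle, and the step I would spend the most care on, is the bookkeeping of indices and the verification that the orders match: one must check that a term arising from the normal-derivative branch, where $\tau$ is differentiated $j$ times along a subface $e\in\mathcal F^j(F)$ and paired against $\partial^{|\alpha|}v/\partial\nu_e^\alpha$, always carries a derivative of $\tau$ of total order exactly $(s-1)-j-(|\alpha|-1) = s-j-|\alpha|$ when the normal bump $e_i$ is included in $\alpha$, and of order $(s-1)-j-|\alpha|$ plus one from the surface-divergence step in the tangential branch — so both branches land on order $s-j-|\alpha|$, consistent with the claim. A secondary subtlety is regularity: the identity as stated assumes $\tau\in H^s(F)$ and $(\nabla^s v)|_F\in L^2(F)$, and at intermediate induction levels one differentiates $\tau$ and restricts traces to lower-dimensional subfaces; I would note that these manipulations are justified first for smooth $\tau$ and $v$ and then extended by the density of smooth functions together with the trace theorem on the nested faces, with all boundary integrals over $e\in\mathcal F^j(F)$ being well defined because the relevant traces of $\nabla^{s-j}(\partial^{|\alpha|}v/\partial\nu^\alpha)$ and of $D^{s-j-|\alpha|}_{e,\alpha}(\tau)$ lie in $L^2(e)$.
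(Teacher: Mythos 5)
Your proposal is correct and follows essentially the same route as the paper: induction on $s$, splitting the outermost gradient slot into normal components and the surface gradient via \eqref{eq:surfacegrad}, integrating by parts on $F$, and applying the inductive hypothesis to the three resulting families of terms (normal-derivative terms on $F$, the $-\div_F\tau$ term on $F$, and the boundary terms on $e\in\mathcal F^1(F)$ viewed as codimension-$(r+1)$ faces). The only detail worth making explicit is that your ``bump $\alpha$ by $e_i$'' step requires writing $\nu_{F,i}$ as a linear combination of the chosen normals $\nu_{e,1},\dots,\nu_{e,r+j}$ of a subface $e\in\mathcal F^j(F)$ (the normals being constant on flat faces, so no commutators actually arise), which is exactly the closing observation in the paper's proof.
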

\begin{proof}
We adopt the mathematical induction to prove the identity~\eqref{eq:HmfaceGreen}.
When $s=1$, we get from~\eqref{eq:surfacegrad} and integration by parts
\begin{align*}
(\tau, \nabla v)_F&=\sum_{i=1}^r(\tau, \frac{\partial v}{\partial\nu_{F,i}}\nu_{F,i})_F+(\tau, \nabla_Fv)_F \\
&=\sum_{i=1}^r(\nu_{F,i}^{\intercal}\tau, \frac{\partial v}{\partial\nu_{F,i}})_F - (\div_F\tau, v)_F +\sum_{e\in\mathcal F^1(F)}(\nu_{F,e}^{\intercal}\tau, v)_e.
\end{align*}
Thus the identity~\eqref{eq:HmfaceGreen} holds for $s=1$.

Next assume the identity~\eqref{eq:HmfaceGreen} is true for $s=\ell-1$ with $2\leq\ell\leq n-r$, then let us prove it is also true for $s=\ell$.
We get from~\eqref{eq:surfacegrad} and integration by parts
\begin{align*}
(\tau, \nabla^\ell v)_F&=\sum_{i=1}^r(\tau\nu_{F,i}, \nabla^{\ell-1}\frac{\partial v}{\partial\nu_{F,i}})_F+(\tau, \nabla_F\nabla^{\ell-1}v)_F \\
&=\sum_{i=1}^r(\tau\nu_{F,i}, \nabla^{\ell-1}\frac{\partial v}{\partial\nu_{F,i}})_F - (\div_F\tau, \nabla^{\ell-1}v)_F +\sum_{e\in\mathcal F^1(F)}(\tau\nu_{F,e}, \nabla^{\ell-1}v)_e.
\end{align*}
Applying the assumption with $s=\ell-1$ to the right hand side of the last equation term by term, we have
\[
(\tau\nu_{F,i}, \nabla^{\ell-1}\frac{\partial v}{\partial\nu_{F,i}})_F
=\sum_{j=0}^{\ell-1}\sum_{e\in\mathcal F^j(F)}\sum_{\alpha\in A_{r+j}\atop|\alpha|\leq {\ell-1}-j}\Big (D^{\ell-1-j-|\alpha|}_{e, \alpha}(\tau\nu_{F,i}), \;\frac{\partial^{|\alpha|}}{\partial\nu_{e}^{\alpha}}\Big (\frac{\partial v}{\partial\nu_{F,i}}\Big )\Big )_e,
\]
\[
 (\div_F\tau, \nabla^{\ell-1}v)_F=\sum_{j=0}^{\ell-1}\sum_{e\in\mathcal F^j(F)}\sum_{\alpha\in A_{r+j}\atop|\alpha|\leq {\ell-1}-j}\Big (D^{\ell-1-j-|\alpha|}_{e, \alpha}(\div_F\tau), \;\frac{\partial^{|\alpha|}v}{\partial\nu_{e}^{\alpha}}\Big )_e,
\]
\[
(\tau\nu_{F,e}, \nabla^{\ell-1}v)_e=\sum_{j=0}^{\ell-1}\sum_{\tilde e\in\mathcal F^j(e)}\sum_{\alpha\in A_{r+1+j}\atop|\alpha|\leq {\ell-1}-j}\Big (D^{\ell-1-j-|\alpha|}_{\tilde e, \alpha}(\tau\nu_{F,e}), \;\frac{\partial^{|\alpha|}v}{\partial\nu_{\tilde e}^{\alpha}}\Big )_{\tilde e}.
\]
Finally we conclude~\eqref{eq:HmfaceGreen} for $s=\ell$ by combining the last fourth equations and the fact that $\nu_{F,i}$ is a linear combination of $\nu_{e, 1},\cdots, \nu_{e, r+j}$ if $e\in\mathcal F^j(F)$.
\end{proof}

For each term in the right hand side of~\eqref{eq:HmfaceGreen}, the total number of differentiation of the integrand is $s-j$. In view of Stokes theorem, $e\in \mathcal F^j(F)$ can be thought as $e\in \partial^j(F)$ so that the total number of differentiation is $s$ which matches that of the left hand side.
The bounds on $s$ in Lemma~\ref{lem:20190605-1} imply $r+s\leq n$, then $\mathcal F^{s}(F)\subset\mathcal F^{r+s}(K)$ is well-defined for $F\in \mathcal F^r(K)$. Hence we can recursively apply the Stokes theorem till there is no derivative of $v$ on the lowest dimensional faces.

%\begin{remark}\rm
We give two examples of identity \eqref{eq:HmfaceGreen}.
When $n\geq 2$, $s=1$, and $1\leq r\leq n-1$, the explicit expression of \eqref{eq:HmfaceGreen} is that for any $F\in \mathcal F^r(K)$, $\tau\in H^{1}(F; \mathbb R^{n})$ and $(\nabla v)|_F\in L^2(F; \mathbb R^{n})$,
\[
(\tau, \nabla v)_F= - (\div_F\tau, v)_F+ \sum_{i=1}^r(\nu_{F,i}^{\intercal}\tau, \frac{\partial v}{\partial\nu_{F,i}})_F +\sum_{e\in\mathcal F^1(F)}(\nu_{F,e}^{\intercal}\tau, v)_e.
\]
If $n=3$ and $s=2$, then $r=1$. And the explicit expression of \eqref{eq:HmfaceGreen} is that for any $F\in \mathcal F^1(K)$, $\tau\in H^{2}(F; \mathbb T_{3}(2))$ and $(\nabla^2v)|_F\in L^2(F; \mathbb T_{3}(2))$,
\begin{align*}
(\tau, \nabla^2 v)_F=&\,(\div_F\div_F\tau, v)_F - (\nu_{F,1}^{\intercal}(\div_F\tau)+\div_F(\tau\nu_{F,1}), \frac{\partial v}{\partial\nu_{F,1}})_F \\
& + (\nu_{F,1}^{\intercal}\tau\nu_{F,1}, \frac{\partial^2 v}{\partial\nu_{F,1}^2})_F - \sum_{e\in\mathcal F^1(F)}(\nu_{F,e}^{\intercal}(\div_F\tau)+\div_e(\tau\nu_{F,e}), v)_e \\
&+\sum_{e\in\mathcal F^1(F)}\sum_{i=1}^2(\nu_{e,i}^{\intercal}\tau\nu_{F,e} + (\nu_{e,i}^{\intercal}\nu_{F,1})\nu_{F,1}^{\intercal}\tau\nu_{F,e}, \frac{\partial v}{\partial\nu_{e,i}})_e \\
&+\sum_{e\in\mathcal F^1(F)}\sum_{\delta\in\mathcal F^1(e)}(\nu_{e,\delta}^{\intercal}\tau\nu_{F,e})(\delta)v(\delta).
\end{align*}
%\end{remark}

\begin{theorem}\label{thm:HmGreen}
Let $1\leq m\leq n$.
There exist differential operators $D^{2m-j-|\alpha|}_{F, \alpha}$ for $j=1,\cdots, m$, $F\in\mathcal F^j(K)$ and $\alpha\in A_j$ with $|\alpha|\leq m-j$ such that it holds for any $u\in H^{m}(K)$ satisfying $(-\Delta)^mu\in L^2(K)$ and $D^{2m-j-|\alpha|}_{F, \alpha}u\in L^2(F)$,  and any $v\in H^m(K)$
\begin{align}
(\nabla^mu, \nabla^mv)_K = &((-\Delta)^mu, v)_K \notag\\
&+ \sum_{j=1}^m\sum_{F\in\mathcal F^j(K)}\sum_{\alpha\in A_{j}\atop|\alpha|\leq m-j}\Big (D^{2m-j-|\alpha|}_{F, \alpha}u, \frac{\partial^{|\alpha|}v}{\partial\nu_{F}^{\alpha}}\Big )_F. \label{eq:HmGreen}
\end{align}
\end{theorem}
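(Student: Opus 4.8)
The plan is to prove \eqref{eq:HmGreen} by induction on $m$, taking the first‑order Green's identity \eqref{eq:H1Green} as the base case $m=1$ (the case $m=2$ being \eqref{eq:H2Green}). Since every summand on both sides of \eqref{eq:HmGreen} is continuous with respect to the $L^2$‑norms of the quantities assumed to lie in $L^2(K)$ or $L^2(F)$, I would first reduce to $u,v\in C^\infty(\overline K)$ and recover the stated weak hypotheses at the end by a density argument. So assume $2\le m\le n$, take $u,v$ smooth, and assume \eqref{eq:HmGreen} at order $m-1$ (for all smooth arguments).

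For the inductive step I would begin with a single integration by parts over $K$: writing $\nabla^m u=\nabla(\nabla^{m-1}u)$, $\nabla^m v=\nabla(\nabla^{m-1}v)$ and using $\div\nabla^m u=\nabla^{m-1}(\Delta u)$,
\[
(\nabla^m u,\nabla^m v)_K=-\bigl(\nabla^{m-1}(\Delta u),\nabla^{m-1}v\bigr)_K+\sum_{F\in\mathcal F^1(K)}\Bigl(\tfrac{\partial}{\partial\nu_{K,F}}\nabla^{m-1}u,\ \nabla^{m-1}v\Bigr)_F .
\]
To the volume term I would apply \eqref{eq:HmGreen} at order $m-1$ with $\Delta u$ in place of $u$; because $(-\Delta)^{m-1}\Delta u=(-1)^{m-1}\Delta^m u$ this produces $((-\Delta)^m u,v)_K$ together with face terms $\bigl(D^{2(m-1)-j-|\alpha|}_{F,\alpha}(\Delta u),\partial^{|\alpha|}v/\partial\nu_F^\alpha\bigr)_F$ over $1\le j\le m-1$, $F\in\mathcal F^j(K)$, $\alpha\in A_j$, $|\alpha|\le m-1-j$. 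Since $\Delta$ raises the differentiation order by two and $m-1-j\le m-j$, these are already of the form required by \eqref{eq:HmGreen}.

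It then remains to rewrite each boundary term over $F\in\mathcal F^1(K)$. The key observation is that $\tfrac{\partial}{\partial\nu_{K,F}}\nabla^{m-1}u$ is an element of $\mathbb T_n(m-1)$ on $F$ (an $m$‑th order derivative of $u$) and $(\nabla^{m-1}v)|_F\in L^2(F;\mathbb T_n(m-1))$, so Lemma~\ref{lem:20190605-1} applies directly with $r=1$ and $s=m-1$; the required inequality $s\le n-r$, i.e. $m-1\le n-1$, is exactly where the hypothesis $m\le n$ enters. The lemma expresses the boundary integral as a sum over $j'=0,\dots,m-1$, $e\in\mathcal F^{j'}(F)$ and $\beta\in A_{1+j'}$ with $|\beta|\le m-1-j'$ of terms $\bigl(D^{m-1-j'-|\beta|}_{e,\beta}\bigl(\tfrac{\partial}{\partial\nu_{K,F}}\nabla^{m-1}u\bigr),\ \partial^{|\beta|}v/\partial\nu_e^\beta\bigr)_e$. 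I would then relabel $j:=j'+1$ (so $e\in\mathcal F^{j'}(F)\subset\mathcal F^j(K)$, $1\le j\le m$, $\alpha:=\beta\in A_{1+j'}=A_j$) and check the two bookkeeping identities $|\alpha|\le m-1-j'=m-j$ and (order of the resulting derivative of $u$)$\,=m+(m-1-j'-|\beta|)=2m-j-|\alpha|$. Summing over $F\in\mathcal F^1(K)$ and lumping the several contributions landing on a fixed $e\in\mathcal F^j(K)$ into the (non‑unique) operator $D^{2m-j-|\alpha|}_{e,\alpha}$, and combining with the volume computation, gives \eqref{eq:HmGreen} for smooth $u,v$; density then finishes the proof. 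The operators $D^{2m-j-|\alpha|}_{F,\alpha}$ whose existence is asserted are read off from this construction (compositions of the surface‑divergence operators coming from Lemma~\ref{lem:20190605-1}, normal‑derivative traces, and $\Delta$).

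I expect the main obstacle to be the last paragraph: making sure Lemma~\ref{lem:20190605-1} is invoked on the correct object (the trace $\tfrac{\partial}{\partial\nu_{K,F}}\nabla^{m-1}u$ paired against the \emph{full} ambient tensor $\nabla^{m-1}v|_F$, not a surface gradient), keeping track of which $\nu_{F,i}$ get re‑expressed through the normals $\nu_{e,1},\dots,\nu_{e,j}$ of the lower‑dimensional faces, and confirming that after the shift $j=j'+1$ both index constraints ($|\alpha|\le m-j$ and order $=2m-j-|\alpha|$) hold exactly, so that the output matches the right‑hand side of \eqref{eq:HmGreen} summand by summand. A secondary point to be careful about is the density step, i.e. that the identity extends from smooth functions to the weak hypotheses in the statement.
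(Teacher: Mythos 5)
Your proposal is correct and follows essentially the same route as the paper: a single integration by parts (with $\div\nabla^m u=\nabla^{m-1}(\Delta u)$), the induction hypothesis applied to the volume term, and Lemma~\ref{lem:20190605-1} with $r=1$, $s=m-1$ applied to the face terms $((\nabla^m u)\nu_{K,F},\nabla^{m-1}v)_F$, followed by the index shift $j=j'+1$ and a density argument. The bookkeeping you flag as the main obstacle ($|\alpha|\le m-j$ and derivative order $2m-j-|\alpha|$) works out exactly as you computed, and your sign convention (applying the hypothesis to $\Delta u$ rather than $-\Delta u$) is an immaterial variant of the paper's.
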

\begin{proof}
By the density argument, we can assume $u\in H^{2m}(K)$.
We still use the mathematical induction to prove the identity~\eqref{eq:HmGreen}.
The identity~\eqref{eq:HmGreen} for $m=1$ and $m=2$ is just the identities~\eqref{eq:H1Green} and~\eqref{eq:H2Green} respectively.
Assume the identity~\eqref{eq:HmGreen} is true for $m=\ell-1$ with $3\leq\ell\leq n$, then let us prove the identity~\eqref{eq:HmGreen} is also true for $m=\ell$.

Applying the integration by parts,
\begin{align*}
(\nabla^{\ell}u, \nabla^{\ell}v)_K&=-(\div\nabla^{\ell}u, \nabla^{\ell-1}v)_K +\sum_{F\in\mathcal F^{1}(K)}((\nabla^{\ell}u)\nu_{K,F}, \nabla^{\ell-1} v)_{F} \\
&=(\nabla^{\ell-1}(-\Delta u), \nabla^{\ell-1}v)_K +\sum_{F\in\mathcal F^{1}(K)}((\nabla^{\ell}u)\nu_{K,F}, \nabla^{\ell-1} v)_{F}.
\end{align*}
Since the identity~\eqref{eq:HmGreen} holds for $m=\ell-1$, we have
\begin{align*}
&(\nabla^{\ell-1}(-\Delta u), \nabla^{\ell-1}v)_K \\
= &((-\Delta)^{\ell}u, v)_K + \sum_{j=1}^{\ell-1}\sum_{F\in\mathcal F^j(K)}\sum_{\alpha\in A_{j}\atop|\alpha|\leq {\ell-1}-j}\Big (D^{2({\ell-1})-j-|\alpha|}_{F, \alpha}(-\Delta u), \frac{\partial^{|\alpha|}v}{\partial\nu_{F}^{\alpha}}\Big )_F.
\end{align*}
Taking $\tau=(\nabla^{\ell}u)\nu_{K,F}$, $s=\ell-1$ and $r=1$ in~\eqref{eq:HmfaceGreen}, we get
\begin{align*}
((\nabla^{\ell}u)\nu_{K,F}, \nabla^{\ell-1}v)_F
= &\sum_{j=0}^{\ell-1}\sum_{e\in\mathcal F^j(F)}\sum_{\alpha\in A_{1+j}\atop|\alpha|\leq {\ell-1}-j}\Big (D^{{\ell-1}-j-|\alpha|}_{e, \alpha}((\nabla^{\ell}u)\nu_{K,F}), \;\frac{\partial^{|\alpha|}v}{\partial\nu_{e}^{\alpha}}\Big )_e\\
= &\sum_{j=1}^{\ell}\sum_{e\in\mathcal F^{j-1}(F)}\sum_{\alpha\in A_{j}\atop|\alpha|\leq {\ell}-j}\Big (D^{{\ell}-j-|\alpha|}_{e, \alpha}((\nabla^{\ell}u)\nu_{K,F}), \;\frac{\partial^{|\alpha|}v}{\partial\nu_{e}^{\alpha}}\Big )_e.
\end{align*}
Therefore we finish the proof by combining the last three equations.
\end{proof}

Examples for $m=1, 2, n\geq m$ and $m = n = 3$ can be found in Appendix~\ref{sec:appendixB}.

\subsection{Virtual element space}
Inspired by identity~\eqref{eq:HmGreen}, for any element $K\in\mathcal T_h$ and integer $k\geq m$,
the local degrees of freedom $\mathcal N_k(K)$ are given as follows:
\begin{align}
\frac{1}{|K|}(v, q)_K & \quad\forall~q\in\mathbb M_{k-2m}(K), \label{Hmdof1}\\
\frac{1}{|F|^{(n-j-|\alpha|)/(n-j)}}(\frac{\partial^{|\alpha|}v}{\partial\nu_{F}^{\alpha}}, q)_F & \quad\forall~q\in\mathbb M_{k-(2m-j-|\alpha|)}(F) \label{Hmdof2}
\end{align}
on each $F\in\mathcal F^{j}(K)$, where $j=1,\cdots, m$, $\alpha\in A_{j}$ and $|\alpha|\leq m-j$.

We present an heuristic explanation of the scaling factor in (3.4).
Let $\hat K=\{\hat{\bs x}\in \mathbb R^n: \hat{\bs x}=\frac{1}{h_K}(\bs x-\bs x_K)\quad \forall~\bs x\in K\}$, and an affine mapping $\Psi: \hat{\bs x}\in\mathbb R^n\to \Psi(\hat{\bs x})=h_K\hat{\bs x}+\bs x_K\in\mathbb R^n$.
Then $h_{\hat K}\eqsim 1$ and $\Psi(\hat K)=K$.
For any function $v(\bs x)$ defined on $K$, let $\hat v(\hat{\bs x}):= v(\Psi(\hat{\bs x}))$, which is defined on $\hat K$.
By the scaling argument, we have
\[
(\frac{\partial^{|\alpha|}v}{\partial\nu_{F}^{\alpha}}, q)_F = h_K^{n-j-|\alpha|}(\frac{\partial^{|\alpha|}\hat v}{\partial\nu_{\hat F}^{\alpha}}, \hat q)_{\hat F}.
\]
By the mesh conditions (A1)-(A2) in Section~\ref{subsec:meshcontitions}, it holds $|F|\eqsim h_K^{n-j}$, thus there exists a constant $C>0$ being independent of $h_K$ such that $|F|^{(n-j-|\alpha|)/(n-j)}=C h_K^{n-j-|\alpha|}$.
Then
\[
\frac{1}{|F|^{(n-j-|\alpha|)/(n-j)}}(\frac{\partial^{|\alpha|}v}{\partial\nu_{F}^{\alpha}}, q)_F=\frac{1}{C}(\frac{\partial^{|\alpha|}\hat v}{\partial\nu_{\hat F}^{\alpha}}, \hat q)_{\hat F}=\frac{1}{C_1}\frac{1}{|\hat F|^{(n-j-|\alpha|)/(n-j)}}(\frac{\partial^{|\alpha|}\hat v}{\partial\nu_{\hat F}^{\alpha}}, \hat q)_{\hat F},
\]
where $C_1=C/|\hat F|^{(n-j-|\alpha|)/(n-j)}$ is independent of $h_K$. Hence all the degrees of freedom in \eqref{Hmdof1}-\eqref{Hmdof2} share the same order of magnitude.

Again due to the first terms in the inner products of the right hand side of~\eqref{eq:HmGreen}, and the degrees of freedom~\eqref{Hmdof1}-\eqref{Hmdof2}, it is inherent to define the local space of the $H^m$-nonconforming virtual element as
\begin{align*}
V_k(K):=\{ u\in H^m(K):& (-\Delta)^m u\in \mathbb P_{k-2m}(K),   \\
& D^{2m-j-|\alpha|}_{F, \alpha}(u)|_F\in\mathbb P_{k-(2m-j-|\alpha|)}(F)\quad\forall~ F\in\mathcal F^{j}(K),\\
 &\qquad\qquad\qquad j=1,\cdots, m, \,\alpha\in A_{j} \textrm{ and } |\alpha|\leq m-j\},
\end{align*}
where the differential operators $D^{2m-j-|\alpha|}_{F, \alpha}$ are introduced in Theorem~\ref{thm:HmGreen}.

In the following we shall prove that $(K, \mathcal N_k(K), V_k(K))$ forms a finite element triple in the sense of Ciarlet~\cite{Ciarlet1978}. Unlike the traditional finite element, in virtual element, only the set of the degrees of freedom $\mathcal N_k(K)$ needs to be explicitly known. The `virtual' space $V_k(K)$ is only needed for the purpose of analysis and the specific formulation for $D^{2m-j-|\alpha|}_{F, \alpha}$ is not needed in the definition of $V_k(K)$.

The following property is the direct result of~\eqref{eq:HmfaceGreen} and the definition of the degrees of freedom~\eqref{Hmdof2}.
\begin{lemma}\label{lem:bimgunisolvence}
Let $K\in\mathcal T_h$, $F\in \mathcal F^r(K)$ with $1\leq r\leq m$, nonnegative integer $s\leq m-r$ satisfying $k\geq 2m-(r+s)$.
For any $\tau\in \mathbb P_{k-(2m-r-s)}(F; \mathbb T_{n}(s))$ and $(\nabla^sv)|_F\in L^2(F; \mathbb T_{n}(s))$, the term
\begin{equation*}
(\tau, \nabla^sv)_F
\end{equation*}
is uniquely determined by the degrees of freedom $\Big (\frac{\partial^{|\alpha|}v}{\partial\nu_{e}^{\alpha}}, q\Big )_e$ for all $0\leq j\leq s$, $e\in\mathcal F^j(F)$, $\alpha\in A_{r+j}$ with $|\alpha|\leq s-j$, and $q\in\mathbb M_{k-(2m-r-j-|\alpha|)}(e)$.
\end{lemma}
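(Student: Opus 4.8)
The plan is to apply the face Green's identity \eqref{eq:HmfaceGreen} of Lemma~\ref{lem:20190605-1} directly to the pair $(\tau, v)$ on $F$, and then argue inductively on the co-dimension $j$ that every term appearing on the right-hand side is computable from the listed degrees of freedom. First I would note that the hypotheses $1\le r\le m$, $s\le m-r$ and $k\ge 2m-(r+s)$ are exactly what is needed so that Lemma~\ref{lem:20190605-1} applies (we have $r+s\le m\le n$, so $s\le n-r$) and so that the polynomial degrees $k-(2m-r-j-|\alpha|)$ arising below are all nonnegative; then \eqref{eq:HmfaceGreen} gives
\[
(\tau, \nabla^sv)_F = \sum_{j=0}^s\sum_{e\in\mathcal F^j(F)}\sum_{\alpha\in A_{r+j}\atop|\alpha|\leq s-j}\Big ( D^{s-j-|\alpha|}_{e, \alpha}(\tau), \;\frac{\partial^{|\alpha|}v}{\partial\nu_{e}^{\alpha}}\Big )_e.
\]

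The key observation is that for each fixed $j$, $e\in\mathcal F^j(F)$ and $\alpha\in A_{r+j}$ with $|\alpha|\le s-j$, the factor $D^{s-j-|\alpha|}_{e,\alpha}(\tau)$ is obtained from $\tau\in\mathbb P_{k-(2m-r-s)}(F;\mathbb T_n(s))$ by differentiating $s-j-|\alpha|$ times and restricting to $e$; hence it lies in $\mathbb P_{k-(2m-r-s)-(s-j-|\alpha|)}(e;\mathbb T_n(\cdot)) = \mathbb P_{k-(2m-r-j-|\alpha|)}(e;\mathbb T_n(\cdot))$, which is spanned (componentwise) by the scaled monomials $\mathbb M_{k-(2m-r-j-|\alpha|)}(e)$. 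Therefore the inner product $\big(D^{s-j-|\alpha|}_{e,\alpha}(\tau), \partial^{|\alpha|}v/\partial\nu_e^{\alpha}\big)_e$ is a finite linear combination of the quantities $\big(\partial^{|\alpha|}v/\partial\nu_e^{\alpha}, q\big)_e$ with $q\in\mathbb M_{k-(2m-r-j-|\alpha|)}(e)$. These are precisely the degrees of freedom named in the statement (the normalizing powers of $|e|$ in \eqref{Hmdof1}–\eqref{Hmdof2} are harmless nonzero constants once $e$ is fixed, or can simply be folded into the coefficients). Summing over all $j$, $e$, $\alpha$ then exhibits $(\tau,\nabla^sv)_F$ as a fixed linear functional of that finite collection of d.o.f., which is the claim; I would close by remarking the coefficients depend only on $\tau$, the geometry of $F$ and its subfaces, and the operators $D^{s-j-|\alpha|}_{e,\alpha}$ fixed in Lemma~\ref{lem:20190605-1}, not on $v$.

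The main obstacle is a bookkeeping one rather than a deep one: making sure the degree count is sharp. One must check carefully that applying $D^{s-j-|\alpha|}_{e,\alpha}$ — which is built from surface gradients, surface divergences, and multiplications by normal vectors as in the induction of Lemma~\ref{lem:20190605-1} — lowers polynomial degree by exactly the advertised amount $s-j-|\alpha|$ and no more, so that the resulting polynomial on $e$ genuinely sits in $\mathbb P_{k-(2m-r-j-|\alpha|)}(e)$ and is matched by the available test monomials $\mathbb M_{k-(2m-r-j-|\alpha|)}(e)$; contractions with normal vectors do not raise the degree, and the surface differential operators drop it by one per application, so the count goes through. A minor secondary point is to record that $A_{r+j}$ indexing matches between \eqref{eq:HmfaceGreen} and the d.o.f.\ list, and that the boundary cases $j=0$ (where $e=F$, $\mathcal F^0(F)=F$) and $|\alpha|=s-j$ (where $D^{0}_{e,\alpha}(\tau)$ is just a restriction) are included — both are, by construction.
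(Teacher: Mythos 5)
Your proposal is correct and follows essentially the same route as the paper, which states the lemma as a direct consequence of the face Green's identity \eqref{eq:HmfaceGreen} and the definition of the degrees of freedom \eqref{Hmdof2}; you simply make explicit the degree count showing that each factor $D^{s-j-|\alpha|}_{e,\alpha}(\tau)$ lies in $\mathbb P_{k-(2m-r-j-|\alpha|)}(e)$, so each boundary term is a linear combination of the listed d.o.f. The only (harmless) detail worth noting is that Lemma~\ref{lem:20190605-1} is stated for positive $s$, while the case $s=0$ of the present lemma is trivial since $(\tau,v)_F$ is itself one of the listed degrees of freedom.
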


\begin{lemma}
We have $\mathbb P_{k}(K)\subseteq V_k(K)$ and
\begin{equation}\label{eq:dim}
\dim V_k(K)= \dim \mathcal N_k(K).
%\dim\mathbb P_{k-2m}(K) + \sum_{j=1}^m\sum_{F\in\mathcal F^j(K)}\sum_{\alpha\in A_{j}\atop|\alpha|\leq m-j}\dim\mathbb P_{k-(2m-j-|\alpha|)}(F).
\end{equation}
\end{lemma}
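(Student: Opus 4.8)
The plan is to prove the two assertions separately, both of which follow from the structure already set up. For the inclusion $\mathbb P_k(K)\subseteq V_k(K)$, I would take an arbitrary $p\in\mathbb P_k(K)$ and check each defining condition of $V_k(K)$. Since $p$ has degree at most $k$, the polyharmonic term $(-\Delta)^m p$ is a polynomial of degree at most $k-2m$, so it lies in $\mathbb P_{k-2m}(K)$ (and is simply zero if $k<2m$). For the face conditions, the operator $D^{2m-j-|\alpha|}_{F,\alpha}$ produced by Theorem~\ref{thm:HmGreen} is, by its construction in the inductive proof, a composition of at most $2m-j-|\alpha|$ first-order differential operators (ordinary or surface derivatives, together with contractions against fixed normal vectors). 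Applying such an operator to $p\in\mathbb P_k(K)$ and restricting to $F$ yields a polynomial on $F$ of degree at most $k-(2m-j-|\alpha|)$; hence $D^{2m-j-|\alpha|}_{F,\alpha}(p)|_F\in\mathbb P_{k-(2m-j-|\alpha|)}(F)$. This verifies all conditions, so $p\in V_k(K)$.

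For the dimension identity, the natural route is to exhibit a bijection — or at least show equality of the two cardinalities — between $\mathcal N_k(K)$ and a spanning set. The quantity $\dim\mathcal N_k(K)$ is just the number of degrees of freedom, which by \eqref{Hmdof1}--\eqref{Hmdof2} equals
\begin{equation*}
\dim\mathbb M_{k-2m}(K)+\sum_{j=1}^m\sum_{F\in\mathcal F^j(K)}\sum_{\alpha\in A_j,\ |\alpha|\leq m-j}\dim\mathbb M_{k-(2m-j-|\alpha|)}(F),
\end{equation*}
where $\dim\mathbb M_\ell(D)=\binom{\ell+\dim D}{\dim D}$ for $\ell\geq 0$ and $0$ otherwise. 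On the other hand, $\dim V_k(K)$ should be computed from the defining constraints: $V_k(K)$ is carved out of $H^m(K)$ by requiring a finite list of quantities (one polyharmonic interior quantity, and the various boundary derivatives) to lie in prescribed polynomial spaces. The key observation is that the map sending $u\in V_k(K)$ to the tuple consisting of $(-\Delta)^m u\in\mathbb P_{k-2m}(K)$ together with the boundary data $\big(D^{2m-j-|\alpha|}_{F,\alpha}(u)|_F\big)$ is — via the Green's identity \eqref{eq:HmGreen} and an argument that the associated boundary value problem for the polyharmonic operator is well posed — a linear isomorphism onto the product of those polynomial spaces, whose dimension is exactly the displayed sum. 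I would present this as: (i) the map is injective because if all these data vanish then \eqref{eq:HmGreen} with $v=u$ gives $|\nabla^m u|_{0,K}=0$, and combined with the vanishing boundary traces this forces $u$ to be a polynomial of degree $<m$ that also has vanishing boundary data, hence $u=0$; (ii) surjectivity follows because each prescribed polynomial datum can be realized by solving the corresponding polyharmonic boundary value problem, whose solvability is standard once one checks the boundary conditions form a complete Dirichlet-type system (this is where the bound $m\le n$ and the face hierarchy $\mathcal F^j(K)$ enter).

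The main obstacle I anticipate is part (ii) — showing surjectivity of the data map, i.e. that $\dim V_k(K)$ is not smaller than $\dim\mathcal N_k(K)$. Injectivity (the unisolvence-type direction, $\dim V_k(K)\le\dim\mathcal N_k(K)$, and also that the dofs are unisolvent on $V_k(K)$) is comparatively routine given Lemma~\ref{lem:bimgunisolvence}, which already reduces the face inner products to the dofs. For surjectivity one must argue that the system of boundary operators $\{D^{2m-j-|\alpha|}_{F,\alpha}\}$ together with the interior operator $(-\Delta)^m$ genuinely admits a solution for arbitrary polynomial right-hand sides; the cleanest way is to avoid an explicit PDE existence proof and instead note that $V_k(K)\supseteq\mathbb P_k(K)$ already contributes polynomial solutions, and then enlarge by solving $(-\Delta)^m u = q$ with homogeneous data using the density/regularity of $H^m(K)$, checking that the resulting $u$ indeed satisfies the polynomial boundary constraints (they are $0$ in that case). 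Combining: the data map hits every polynomial in the product space, so $\dim V_k(K)\ge\dim\mathcal N_k(K)$; together with the reverse inequality this yields \eqref{eq:dim}. I would flag that the precise counting of the boundary terms — making sure the multi-index ranges $\alpha\in A_j$, $|\alpha|\le m-j$ match exactly across both sides — is the place where careless bookkeeping could go wrong, so I would tabulate it against the $m=1,2$ cases in Sections~\ref{sec:H1nvem}--\ref{sec:H2nvem} as a sanity check.
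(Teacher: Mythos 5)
Your first part (the inclusion $\mathbb P_k(K)\subseteq V_k(K)$) is fine and is exactly the paper's argument. The dimension count, however, contains a genuine gap: you assert that the map $u\mapsto\bigl((-\Delta)^m u,\ (D^{2m-j-|\alpha|}_{F,\alpha}u)|_F\bigr)$ is a linear isomorphism from $V_k(K)$ onto the full product of the polynomial data spaces, and this is false in both directions. The boundary operators $D^{2m-j-|\alpha|}_{F,\alpha}$ coming from Theorem~\ref{thm:HmGreen} are \emph{natural} (Neumann-type) operators of differentiation order $2m-j-|\alpha|\geq m$; hence every $p\in\mathbb P_{m-1}(K)$ is annihilated by all of them and by $(-\Delta)^m$, so the kernel of your data map is $\mathbb P_{m-1}(K)$, not $\{0\}$. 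Your injectivity step (i) breaks precisely here: after concluding $|\nabla^m u|_{0,K}=0$ you invoke ``vanishing boundary traces'' to force $u=0$, but the data of the map are not low-order traces of $u$ — they vanish identically on $\mathbb P_{m-1}(K)$, so nothing forces $u=0$. Surjectivity onto the full product fails as well: testing the weak form \eqref{eq:20190902} with $v=q\in\mathbb P_{m-1}(K)$ shows the data must satisfy the compatibility condition \eqref{eq:compatibility} (for $m=1$ this is the classical solvability constraint $\int_K f_1+\sum_F\int_F g=0$ of the Neumann Laplacian), so arbitrary polynomial data cannot be realized; in particular your fallback of solving $(-\Delta)^m u=q$ with homogeneous natural data for arbitrary $q$ is not possible, since that also requires $(q,p)_K=0$ for all $p\in\mathbb P_{m-1}(K)$.

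The paper's proof repairs exactly these two points: it poses the problem in $H^m(K)/\mathbb P_{m-1}(K)$ with data restricted by \eqref{eq:compatibility}, obtains well-posedness from Lax--Milgram, so that $\dim\bigl(V_k(K)/\mathbb P_{m-1}(K)\bigr)$ equals the dimension of the data space minus $\dim\mathbb P_{m-1}(K)$, and then adds back $\dim\mathbb P_{m-1}(K)$ for the kernel. In your version the missing compatibility constraint and the missing kernel have equal dimension and happen to cancel, so your final number is correct, but the claimed isomorphism is not a valid argument; you need the quotient/compatibility bookkeeping (or an equivalent accounting of kernel and cokernel) to make the count legitimate.
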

\begin{proof}
For any $q\in\mathbb P_{k}(K)$, it is obvious that
\[
(-\Delta)^m q\in \mathbb P_{k-2m}(K), \quad D^{2m-j-|\alpha|}_{F, \alpha}q|_F\in\mathbb P_{k-(2m-j-|\alpha|)}(F).
\]
Hence it holds $\mathbb P_{k}(K)\subseteq V_k(K)$. Since all the differential operators in the definition of $V_k(K)$ are linear, then $V_k(K)$ is a vector space.

Next we count the dimension of $V_k(K)$. Consider the local polyharmonic equation with the Neumann boundary condition
\begin{equation}\label{eq:localmharmonic}
\left\{
\begin{aligned}
&(-\Delta)^mu=f_1\quad\text{in
}K, \\
&D^{2m-j-|\alpha|}_{F,\alpha}(u)=g_{j}^{F,\alpha}\quad\text{on each }F\in\mathcal F^{j}(K) \\
&\qquad\qquad\qquad\qquad\qquad \text{with } j=1,\cdots, m, \,\alpha\in A_{j} \textrm{ and } |\alpha|\leq m-j,
\end{aligned}
\right.
\end{equation}
where $f_1\in\mathbb P_{k-2m}(K)$, $g_{j}^{F,\alpha}\in\mathbb P_{k-(2m-j-|\alpha|)}(F)$.
Applying the generalized Green's identity \eqref{eq:HmGreen}, the weak formulation of \eqref{eq:localmharmonic} is:
\begin{equation}\label{eq:20190902}
(\nabla^mu, \nabla^mv)_K = (f_1, v)_K + \sum_{j=1}^m\sum_{F\in\mathcal F^j(K)}\sum_{\alpha\in A_{j}\atop|\alpha|\leq m-j}\Big(g_{j}^{F,\alpha}, \frac{\partial^{|\alpha|}v}{\partial\nu_{F}^{\alpha}}\Big )_F
\end{equation}
for any $v\in H^m(K)$.
If taking $v=q\in\mathbb P_{m-1}(K)$ in \eqref{eq:20190902}, we have the compatibility condition of the data
\begin{equation}\label{eq:compatibility}
(f_1, q)_K + \sum_{j=1}^m\sum_{F\in\mathcal F^j(K)}\sum_{\alpha\in A_{j}\atop|\alpha|\leq m-j}\Big(g_{j}^{F,\alpha}, \frac{\partial^{|\alpha|}q}{\partial\nu_{F}^{\alpha}}\Big )_F=0\quad\forall~q\in\mathbb P_{m-1}(K).
\end{equation}

On the other hand, given $f_1\in\mathbb P_{k-2m}(K)$, $g_{j}^{F,\alpha}\in\mathbb P_{k-(2m-j-|\alpha|)}(F)$ satisfying the compatibility condition \eqref{eq:compatibility}, the weak formulation of the Neumann problem of the local polyharmonic equation \eqref{eq:localmharmonic} is: find $u\in H^m(K)/\mathbb P_{m-1}(K)$ such that
\[
(\nabla^mu, \nabla^mv)_K = (f_1, v)_K + \sum_{j=1}^m\sum_{F\in\mathcal F^j(K)}\sum_{\alpha\in A_{j}\atop|\alpha|\leq m-j}\Big (g_{j}^{F,\alpha}, \frac{\partial^{|\alpha|}v}{\partial\nu_{F}^{\alpha}}\Big )_F
\]
for all $v\in H^m(K)/\mathbb P_{m-1}(K)$.
The well-posedness of this variational formulation is guaranteed by the Lax-Milgram lemma~\cite{Babuvska1971,Ciarlet1978} and specifically the well-posedness of polyharmonic equations with various boundary conditions can be found in \cite{GazzolaGrunauSweers2010,AgmonDouglisNirenberg1959}.

Therefore $\dim(V_k(K)/\mathbb P_{m-1}(K))$ equals
\[
\dim\mathbb P_{k-2m}(K) + \sum_{j=1}^m\sum_{F\in\mathcal F^j(K)}\sum_{\alpha\in A_{j}\atop|\alpha|\leq m-j}\dim\mathbb P_{k-(2m-j-|\alpha|)}(F) -\dim\mathbb P_{m-1}(K),
\]
where the dimension of the constraint for the data is subtracted. When counting $\dim V_k(K)$, we should add back the dimension of the kernel space, i.e., solution spaces of $(\nabla^mu, \nabla^mv)_K = 0$ which implies
$\dim V_k(K) = \dim\mathbb P_{k-2m}(K) + \sum\limits_{j=1}^m\sum\limits_{F\in\mathcal F^j(K)}\sum\limits_{\alpha\in A_{j}\atop|\alpha|\leq m-j}\dim\mathbb P_{k-(2m-j-|\alpha|)}(F) = \dim \mathcal N_k(K)$.
%If $f_1=0$ and $g_{j}^{F,\alpha}=0$, then $u$ can be any polynomial in $\mathbb P_{m-1}(K)$. Noticing that the number of independent constraints in \eqref{eq:compatibility}
%equals to the dimension of $\mathbb P_{m-1}(K)$,
%hence the dimension of $V_k(K)$ is not less than the number of degrees of freedom~\eqref{Hmdof1}-\eqref{Hmdof2}.
\end{proof}

\begin{lemma}\label{eq:dofsunisolvent}
The degrees of freedom~\eqref{Hmdof1}-\eqref{Hmdof2} are unisolvent for the local virtual element space $V_k(K)$.
\end{lemma}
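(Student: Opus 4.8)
The plan is to use the dimension identity \eqref{eq:dim} already established, so that unisolvence reduces to injectivity of the degrees of freedom: it suffices to show that any $v\in V_k(K)$ for which all the degrees of freedom \eqref{Hmdof1}--\eqref{Hmdof2} vanish is identically zero. The first move is to substitute $u=v$ into the generalized Green's identity \eqref{eq:HmGreen}. Since $(-\Delta)^m v\in\mathbb P_{k-2m}(K)$ by the definition of $V_k(K)$ and $(v,q)_K=0$ for all $q\in\mathbb M_{k-2m}(K)$ by \eqref{Hmdof1}, the volume term $((-\Delta)^m v,v)_K$ vanishes. Likewise, on each $F\in\mathcal F^j(K)$ and for each admissible $\alpha$, the first argument $D^{2m-j-|\alpha|}_{F,\alpha}v$ belongs to $\mathbb P_{k-(2m-j-|\alpha|)}(F)$ by the definition of $V_k(K)$, while $\frac{\partial^{|\alpha|}v}{\partial\nu_F^\alpha}$ is $L^2(F)$-orthogonal to that whole polynomial space by \eqref{Hmdof2}; hence every boundary term vanishes. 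Consequently $|v|_{m,K}=0$, so $v\in\mathbb P_{m-1}(K)$, and the problem is reduced to showing that a polynomial in $\mathbb P_{m-1}(K)$ with vanishing degrees of freedom is zero.

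For the polynomial part I would argue by induction on $m$. The case $m=1$ is immediate: there $v$ is constant and the averages $(v,1)_F$, $F\in\mathcal F^1(K)$, occur among \eqref{Hmdof2} (as $k\geq 1$), forcing $v=0$. For $m\geq 2$ and $k\geq m+1$, a polynomial of degree $m-1$ lies in the local space of the $H^{m-1}$-nonconforming virtual element of order $k-2$ on $K$ (note $k-2\geq m-1$), and comparing the two lists of degrees of freedom shows that every degree of freedom of that $H^{m-1}$-element coincides, up to the nonzero scaling factors, with one of the degrees of freedom \eqref{Hmdof1}--\eqref{Hmdof2} of the present $H^m$-element of order $k$: the volume moments match exactly, and the face moments of the $H^{m-1}$-element occur among those of the $H^m$-element because their index ranges ($j\leq m-1\leq m$, $|\alpha|\leq m-1-j\leq m-j$, $q\in\mathbb M_{k-2m+j+|\alpha|}(F)$) are contained in those of \eqref{Hmdof2}. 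Thus all degrees of freedom of the $H^{m-1}$-element applied to $v$ vanish, and the inductive hypothesis gives $v=0$. Iterating, one is reduced to an irreducible case with $k=m$.

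The genuinely delicate point, and the one I expect to be the main obstacle, is precisely this case $k=m$, which the recursion above cannot simplify further since then $k-2<m-1$. When $k=m$ the degrees of freedom acting on $\mathbb P_{m-1}(K)$ are exactly the facewise averages $\big(\frac{\partial^{m-j}v}{\partial\nu_F^\alpha},1\big)_F$ over $F\in\mathcal F^j(K)$ with $|\alpha|=m-j$, $j=1,\dots,m$; on a simplex these are the Wang--Xu degrees of freedom, and one is asking for unisolvence of that element (or its extension to a general polytope). To close the argument here I would use the face Green's identity \eqref{eq:HmfaceGreen}, equivalently Lemma~\ref{lem:bimgunisolvence}, to propagate the vanishing by downward induction on the face dimension: starting from the lowest-dimensional faces in $\mathcal F^m(K)$, where the averages of $v$ itself are controlled, and moving up through $\mathcal F^{m-1}(K),\dots,\mathcal F^1(K)$ and finally to $K$, one shows at each stage that the trace data already known to vanish, together with the available moments on the next batch of faces, determine the next layer of normal derivatives of $v$. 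The hard part is the combinatorial bookkeeping guaranteeing this matching at every stage — it is here that the constraints $m\leq n$ and $k\geq m$ enter and that the argument genuinely generalizes the Wang--Xu unisolvence proof from simplices to arbitrary polytopes; an alternative would be to first show that the trace of $v$ on each facet lies in a lower-order element space on that facet and recurse on $m$ in that fashion.
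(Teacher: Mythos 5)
Your first reduction is exactly the paper's: taking $u=v$ in the Green's identity \eqref{eq:HmGreen} (equivalently \eqref{eq:20190902}), the definition of $V_k(K)$ puts every first argument of the boundary terms in the corresponding polynomial space, the vanishing degrees of freedom \eqref{Hmdof1}--\eqref{Hmdof2} annihilate every term, and hence $\nabla^m v=0$, i.e.\ $v\in\mathbb P_{m-1}(K)$. Your reduction in $m$ (a degree-$(m-1)$ polynomial lies in the $H^{m-1}$-nonconforming virtual space of order $k-2$, whose degrees of freedom form a sub-list of \eqref{Hmdof1}--\eqref{Hmdof2}) is correct where it applies, namely for $k\geq m+1$, and is a genuine observation not in the paper. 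But the argument does not close: the terminal case $k=m$ --- which is both the lowest-order element itself and the case every chain of your reductions ends in whenever $k<2m-1$ --- is only sketched, and you yourself flag the ``combinatorial bookkeeping'' there as unresolved. That is a genuine gap, and it is the heart of the lemma; moreover the propagation you sketch (starting from the lowest-dimensional faces in $\mathcal F^m(K)$ and working up to the facets) is not the direction in which the data actually fit together.

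The missing step is already available in the paper as Lemma~\ref{lem:bimgunisolvence}, used with $s=m-r$: since then $2m-r-s=m\leq k$, constant tensors $\tau\in\mathbb P_0(F;\mathbb T_n(s))$ are admissible, so the vanishing of all degrees of freedom forces $(\tau,\nabla^{m-r}v)_F=0$ for every constant $\tau$ and every $F\in\mathcal F^r(K)$, $1\leq r\leq m$. Now descend in polynomial degree through faces of \emph{increasing} codimension: since $v\in\mathbb P_{m-1}(K)$, the tensor $\nabla^{m-1}v$ is constant, and $r=1$ says its mean over a facet vanishes, hence $\nabla^{m-1}v=0$ and $v\in\mathbb P_{m-2}(K)$; then $r=2$ gives $v\in\mathbb P_{m-3}(K)$, and so on, until $r=m$ gives $v=0$ (this is where $m\leq n$ enters, so that $\mathcal F^m(K)$ is well defined). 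This single descent works uniformly for all $k\geq m$ and all polytopes, so it both fills your $k=m$ gap and makes the induction on $m$ unnecessary.
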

\begin{proof}
Let $v\in V_k(K)$ and suppose all the degrees of freedom~\eqref{Hmdof1}-\eqref{Hmdof2} vanish. We get from~\eqref{eq:20190902}
\[
\|\nabla^mv\|_{0,K}^2=0.
\]
Thus $v\in\mathbb P_{m-1}(K)$. By Lemma \ref{lem:bimgunisolvence} with $s=m-r$, we have for any $F\in \mathcal F^r(K)$ with $1\leq r\leq m$
\begin{equation}\label{eq:temp20180711}
(\tau, \nabla^sv)_F=0\quad \forall~\tau\in \mathbb P_{0}(F; \mathbb T_{n}(s)).
\end{equation}
Due to~\eqref{eq:temp20180711} with $r=1$ and the fact $v\in\mathbb P_{m-1}(K)$, it follows $v\in\mathbb P_{m-2}(K)$.
Recursively applying~\eqref{eq:temp20180711} with $r=2, \cdots, m$ gives $v=0$.
This ends the proof.
\end{proof}

\begin{remark}\label{rmk:wx}\rm
If the element $K\in \mathcal T_h$ is a simplex and $k=m$,
the degrees of freedom~\eqref{Hmdof1} disappear, and the degrees of freedom~\eqref{Hmdof2} are same as those of the nonconforming finite element in~\cite{WangXu2013}. Since $\mathbb P_m(K)\subseteq V_k(K)$, the virtual element $(K, \mathcal N_k(K), V_k(K))$ coincides with the nonconforming finite element in~\cite{WangXu2013} when $K$ is a simplex and $k=m$, which is the minimal finite element for the $2m$-th order partial differential equations in $\mathbb R^n$. In other words, we generalize the nonconforming finite element in~\cite{WangXu2013} to high order $k>m$ and arbitrary polytopes.
\end{remark}

\subsection{Local projections}
%To design the virtual element method, we first need a local $H^m$ projection.
For each $K\in\mathcal T_h$, define a local $H^m$ projection $\Pi^K: H^m(K)\to\mathbb P_k(K)$ as follows: given $v\in H^m(K)$, let $\Pi_k^Kv\in\mathbb P_k(K)$ be the solution of the problem %\mnote{ define $Q_0^F$}
\begin{align}
(\nabla^m\Pi_k^Kv, \nabla^mq)_K&=(\nabla^mv, \nabla^mq)_K\quad  \forall~q\in \mathbb P_k(K),\label{eq:H2projlocal1}\\
\sum_{F\in\mathcal F^{r}(K)}Q_0^{F}(\nabla^{m-r}\Pi_k^Kv)&=\sum_{F\in\mathcal F^{r}(K)}Q_0^{F}(\nabla^{m-r}v), \quad r=1,\cdots, m.\label{eq:H2projlocal2}
\end{align}
The number of equations in~\eqref{eq:H2projlocal2} is
\[
\sum_{r=1}^mC_{n+m-1-r}^{n-1}=C_{n+m-1}^{n}=\dim(\mathbb P_{m-1}(K)).
\]
Then the well-posedness of~\eqref{eq:H2projlocal1}-\eqref{eq:H2projlocal2} can be shown by the similar argument as in the proof of Lemma \ref{eq:dofsunisolvent}. To simplify the notation, we will write as $\Pi^K$.
%The associated global operator $\Pi_{h,k}: V_h\to\mathbb P_k(\mathcal T_h)$ is defined as $\Pi_{h,k}|_K=\Pi^K$ for each $K\in\mathcal T_h$.

Obviously we have
\begin{equation}\label{eq:Hmprojbound}
|\Pi^Kv|_{m,K}\leq |v|_{m,K}\quad \forall~v\in H^m(K).
\end{equation}
We show the projection $\Pi^Ku$ is computable using the degrees of freedom~\eqref{Hmdof1}-\eqref{Hmdof2}.
\begin{lemma}\label{lm:Pi}
The operator $\Pi^K: H^m(K)\to\mathbb P_k(K)$ is a projector, i.e.
\begin{equation}\label{eq:projectPk}
\Pi^Kv=v\quad\forall~v\in\mathbb P_k(K),
\end{equation}
and the projector $\Pi^K$ can be computed using only the degrees of freedom~\eqref{Hmdof1}-\eqref{Hmdof2}.
\end{lemma}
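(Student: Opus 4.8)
To show that $\Pi^K$ is a projector onto $\mathbb P_k(K)$, I would first verify \eqref{eq:projectPk}: given $v \in \mathbb P_k(K)$, the choice $q = v$ is admissible in \eqref{eq:H2projlocal1}, and \eqref{eq:H2projlocal2} is trivially satisfied with $\Pi^K v = v$; by the well-posedness of the defining system (already established by the argument mirroring Lemma~\ref{eq:dofsunisolvent}), the solution is unique, hence $\Pi^K v = v$. This is immediate and not the crux.

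\emph{Computability.} The substance is showing that each quantity appearing in \eqref{eq:H2projlocal1}--\eqref{eq:H2projlocal2} can be evaluated from the degrees of freedom \eqref{Hmdof1}--\eqref{Hmdof2} alone, without knowing $v$ explicitly. For \eqref{eq:H2projlocal1}, the key is to rewrite the right-hand side $(\nabla^m v, \nabla^m q)_K$ for fixed $q \in \mathbb P_k(K)$ using the generalized Green's identity \eqref{eq:HmGreen}, but with the roles reversed: apply \eqref{eq:HmGreen} with $u = q$ (the polynomial) and the given $v$ in the second slot, so that
\[
(\nabla^m v, \nabla^m q)_K = ((-\Delta)^m q, v)_K + \sum_{j=1}^m \sum_{F \in \mathcal F^j(K)} \sum_{\alpha \in A_j \atop |\alpha| \le m-j} \Big( D^{2m-j-|\alpha|}_{F,\alpha} q, \frac{\partial^{|\alpha|} v}{\partial \nu_F^\alpha} \Big)_F.
\]
Now $(-\Delta)^m q \in \mathbb P_{k-2m}(K)$ and each $D^{2m-j-|\alpha|}_{F,\alpha} q \in \mathbb P_{k-(2m-j-|\alpha|)}(F)$ are \emph{explicitly computable polynomials}, since $q$ is a known polynomial. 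The term $((-\Delta)^m q, v)_K$ is then read off from the volume moments \eqref{Hmdof1} (after expanding $(-\Delta)^m q$ in the scaled monomial basis $\mathbb M_{k-2m}(K)$). For the face terms, note that $\frac{\partial^{|\alpha|} v}{\partial \nu_F^\alpha}$ restricted to $F$ is not directly a single dof, but Lemma~\ref{lem:bimgunisolvence} guarantees precisely that $\big(D^{2m-j-|\alpha|}_{F,\alpha} q, \frac{\partial^{|\alpha|}v}{\partial\nu_F^\alpha}\big)_F$ — a pairing of a polynomial on $F$ against a derivative trace of $v$ — is determined by the dofs \eqref{Hmdof2} associated with $F$ and its subfaces. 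So the entire right-hand side of \eqref{eq:H2projlocal1} is computable; the left-hand side $(\nabla^m \Pi^K v, \nabla^m q)_K$ is a known bilinear form in the unknown coefficients of $\Pi^K v \in \mathbb P_k(K)$.

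\emph{The auxiliary constraints.} For \eqref{eq:H2projlocal2}, I must show $\sum_{F \in \mathcal F^r(K)} Q_0^F(\nabla^{m-r} v)$ is computable for $r = 1, \dots, m$. The operator $Q_0^F$ is the $L^2(F)$-projection onto constants, so $Q_0^F(\nabla^{m-r}v) = \frac{1}{|F|}\int_F \nabla^{m-r} v$, a constant tensor. Each scalar component of $\nabla^{m-r} v$ is, up to tangential/normal decomposition on $F$, a linear combination of multi-indexed normal derivatives $\frac{\partial^{|\alpha|} v}{\partial \nu_F^\alpha}$ with $|\alpha| \le m-r$ and surface derivatives of lower-order normal traces; integrating against the constant $1 \in \mathbb M_0(F) \subset \mathbb M_{k-(2m-j-|\alpha|)}(F)$ (valid since $k \ge m$ forces $k - (2m - r - |\alpha|) \ge 0$ here) produces exactly the dofs \eqref{Hmdof2}, possibly after one integration by parts on $F$ to move a surface derivative off $v$ — and $\int_F \nabla_F(\cdot) = \int_{\partial F}(\cdot)$ reduces to lower-dimensional face dofs, again covered by Lemma~\ref{lem:bimgunisolvence}. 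Summing over $F \in \mathcal F^r(K)$ and over all subfaces, everything closes up.

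\textbf{Main obstacle.} The delicate point is the bookkeeping in the face terms: $\frac{\partial^{|\alpha|}v}{\partial\nu_F^\alpha}$ is a derivative with respect to the \emph{global} normals $\nu_{F,i}$, whereas the dofs \eqref{Hmdof2} are set up with a specific (but arbitrary) choice of $r$ orthonormal normals on each $F$; one must confirm the pairings are invariant under this choice and that the change-of-basis formulas $\nu_{F,i} = \sum$ (linear combination of $\nu_{e,\cdot}$) used repeatedly in the proof of Lemma~\ref{lem:20190605-1} propagate correctly down the face hierarchy so that no derivative of $v$ of order exceeding the dof budget ever appears. Lemma~\ref{lem:bimgunisolvence} is exactly the tool that encapsulates this, so the proof reduces to invoking it at each level $j$; the remaining work is purely expanding $q$ and its derivatives in the monomial bases, which is routine.
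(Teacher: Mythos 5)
Your proposal is correct and follows essentially the same route as the paper: the projector property comes from uniqueness of the defining system \eqref{eq:H2projlocal1}--\eqref{eq:H2projlocal2} (the paper phrases it via $p=\Pi^Kv-v$, which is the same uniqueness argument), and computability comes from applying the generalized Green's identity \eqref{eq:HmGreen} with $u=q\in\mathbb P_k(K)$, reading the volume term off \eqref{Hmdof1}, the face terms off \eqref{Hmdof2}, and handling \eqref{eq:H2projlocal2} by Lemma~\ref{lem:bimgunisolvence} with $s=m-r$ (using $k\geq m$). The only cosmetic difference is that for the face terms in \eqref{eq:H2projlocal1} the pairing is already a direct linear combination of the dofs \eqref{Hmdof2}, so Lemma~\ref{lem:bimgunisolvence} is really only needed for the constraints \eqref{eq:H2projlocal2}, exactly as in the paper.
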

\begin{proof}
We first show that $\Pi^K$ is a projector. Let $p=\Pi^Kv-v\in\mathbb P_k(K)$. Taking $q=p$ in~\eqref{eq:H2projlocal1}, we get $\nabla^mp=0$, i.e. $p\in\mathbb P_{m-1}(K)$.
By~\eqref{eq:H2projlocal2},
\[
\sum_{F\in\mathcal F^{r}(K)}Q_0^{F}(\nabla^{m-r}p)=0, \quad r=1,\cdots, m.
\]
Therefore $p=0$, which means
$\Pi^K$ is a projector.

Next by applying the identity~\eqref{eq:HmGreen}, the right hand side of~\eqref{eq:H2projlocal1}
\[
(\nabla^mv, \nabla^mq)_K = (v, (-\Delta)^mq)_K + \sum_{j=1}^m\sum_{F\in\mathcal F^j(K)}\sum_{\alpha\in A_{j}\atop|\alpha|\leq m-j}\Big (\frac{\partial^{|\alpha|}v}{\partial\nu_{F}^{\alpha}}, D^{2m-j-|\alpha|}_{F, \alpha}(q)\Big )_F.
\]
Hence we conclude from the degrees of freedom~\eqref{Hmdof1}-\eqref{Hmdof2} and Lemma \ref{lem:bimgunisolvence} with $s=m-r$ that the right hand sides of~\eqref{eq:H2projlocal1}-\eqref{eq:H2projlocal2} are computable.
\end{proof}
\begin{remark}\rm
 $D^{2m-j-|\alpha|}_{F, \alpha}$ is needed in the computation of $\Pi^K$. But since $q\in \mathbb P_{k}(K)$, $\nabla^m q \in \mathbb P_{k-m}(K)$ and few terms are left for moderate $k$.
\end{remark}

Let $W_k(K) :=V_k(K)$ for $k\geq3m-1$ or $m\leq k\leq 2m - 1$.
To compute the $L^2$ projection onto $\mathbb P_{m-1}(K)$ for $2m\leq k<3m-1$, following~\cite{AhmadAlsaediBrezziMariniEtAl2013}, define
\begin{align*}
\widetilde V_k(K)&:=\{v\in H^m(K): (-\Delta)^mv\in \mathbb P_{m-1}(K), D^{2m-j-|\alpha|}_{F, \alpha}(v)|_F\in\mathbb P_{k-(2m-j-|\alpha|)}(F),  \\
&\qquad\qquad\qquad\qquad\quad\;\;\,\forall~ F\in\mathcal F^{j}(K),\, j=1,\cdots, m, \,\alpha\in A_{j} \textrm{ and } |\alpha|\leq m-j\},
\end{align*}
\[
W_k(K) :=\{v\in \widetilde V_k(K): (v-\Pi^Kv, q)_K=0\quad\forall~q\in\mathbb P_{k-2m}^{\perp}(K)\},
\]
where $\mathbb P_{k-2m}^{\perp}(K)\subset\mathbb P_{m-1}(K)$ is the orthogonal complement space of $\mathbb P_{k-2m}(K)$ of $\mathbb P_{m-1}(K)$ with respect to the inner product $(\cdot,\cdot)_K$.
It is apparent that $\mathbb P_k(K)\subset W_k(K)$ and the local space $W_k(K)$ shares the same degrees of freedom as $V_k(K)$. That is for the same $\mathcal N_k(K)$, we can associate different `virtual' spaces and thus have different interpretation.

\begin{lemma}\label{eq:dofsunisolventWK}
The degrees of freedom~\eqref{Hmdof1}-\eqref{Hmdof2} are unisolvent for the local virtual element space $W_k(K)$.
\end{lemma}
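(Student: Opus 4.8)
The plan is to distinguish the two cases in the definition of $W_k(K)$. If $k\geq 3m-1$ or $m\leq k\leq 2m-1$, then $W_k(K)=V_k(K)$ and the statement is Lemma~\ref{eq:dofsunisolvent}; so assume $2m\leq k<3m-1$, in which case $\mathbb P_{k-2m}^{\perp}(K)\neq\{0\}$ and $V_k(K)\subsetneq\widetilde V_k(K)$. The first step is a dimension count for $\widetilde V_k(K)$: running the Neumann polyharmonic argument used to prove the identity \eqref{eq:dim}, but now letting the volume datum $f_1$ range over $\mathbb P_{m-1}(K)$ instead of $\mathbb P_{k-2m}(K)$ while keeping the compatibility condition \eqref{eq:compatibility} and the kernel $\mathbb P_{m-1}(K)$ unchanged, gives $\dim\widetilde V_k(K)=\dim\mathcal N_k(K)+\dim\mathbb P_{k-2m}^{\perp}(K)$.

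Next I would encode the definition of $W_k(K)$ as a single linear map. Let $Q^{\perp}$ denote the $L^2(K)$-orthogonal projection onto $\mathbb P_{k-2m}^{\perp}(K)$, and set
\[
\Lambda\colon\widetilde V_k(K)\longrightarrow\mathbb R^{\dim\mathcal N_k(K)}\times\mathbb P_{k-2m}^{\perp}(K),\qquad \Lambda(v):=\bigl((N(v))_{N\in\mathcal N_k(K)},\ Q^{\perp}(v-\Pi^Kv)\bigr).
\]
Then $W_k(K)=\Lambda^{-1}\bigl(\mathbb R^{\dim\mathcal N_k(K)}\times\{0\}\bigr)$, so that $\mathcal N_k(K)$ is unisolvent on $W_k(K)$ precisely when $\Lambda$ is bijective. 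Since by the first step the domain and target of $\Lambda$ have the same (finite) dimension, it suffices to prove $\Lambda$ injective.

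So suppose $\Lambda(v)=0$: then $v\in W_k(K)$, all degrees of freedom \eqref{Hmdof1}--\eqref{Hmdof2} of $v$ vanish, and $(v-\Pi^Kv,q)_K=0$ for all $q\in\mathbb P_{k-2m}^{\perp}(K)$. Because $\Pi^K$ is linear and, by Lemma~\ref{lm:Pi}, computable from the degrees of freedom, their vanishing forces $\Pi^Kv=0$. I would then apply the generalized Green's identity \eqref{eq:HmGreen} with $u=v$ (cf.~\eqref{eq:20190902}): every boundary term is of the form $\bigl(D^{2m-j-|\alpha|}_{F,\alpha}(v),\,\partial^{|\alpha|}v/\partial\nu_F^{\alpha}\bigr)_F$ with $D^{2m-j-|\alpha|}_{F,\alpha}(v)|_F\in\mathbb P_{k-(2m-j-|\alpha|)}(F)$, hence vanishes since the matching degrees of freedom \eqref{Hmdof2} do; and for the interior term, splitting $(-\Delta)^mv=p+p^{\perp}$ with $p\in\mathbb P_{k-2m}(K)$, $p^{\perp}\in\mathbb P_{k-2m}^{\perp}(K)$, one gets
\[
((-\Delta)^mv,v)_K=(p,v)_K+(p^{\perp},v-\Pi^Kv)_K+(p^{\perp},\Pi^Kv)_K=0,
\]
the three terms vanishing by \eqref{Hmdof1}, by the $W_k(K)$ constraint, and by $\Pi^Kv=0$. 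Hence $|v|_{m,K}=0$, i.e. $v\in\mathbb P_{m-1}(K)$, and from here I would close exactly as in Lemma~\ref{eq:dofsunisolvent}: Lemma~\ref{lem:bimgunisolvence} with $s=m-r$ forces $\int_F\nabla^{m-r}v=0$ on every $F\in\mathcal F^r(K)$, and using this successively for $r=1,2,\dots,m$ pushes $v$ down through $\mathbb P_{m-2}(K),\mathbb P_{m-3}(K),\dots$ to $v=0$.

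The step I expect to be the crux is the interior term $((-\Delta)^mv,v)_K$: for $v\in\widetilde V_k(K)$ one only knows $(-\Delta)^mv\in\mathbb P_{m-1}(K)$, so its $L^2(K)$-orthogonality to $v$ no longer follows from \eqref{Hmdof1} alone, and it is exactly the enhancement constraint defining $W_k(K)$ together with the exact identity $\Pi^Kv=0$ (itself resting on the computability in Lemma~\ref{lm:Pi}) that restores it. Everything else is a routine adaptation of arguments already in the paper.
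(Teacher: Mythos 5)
Your proof is correct, and its core coincides with the paper's: after reducing to $2m\leq k<3m-1$, both arguments take $v$ with vanishing degrees of freedom, deduce $\Pi^Kv=0$ from the computability in Lemma~\ref{lm:Pi}, kill the volume term $((-\Delta)^mv,v)_K$ in the Green's identity~\eqref{eq:HmGreen} by combining \eqref{Hmdof1}, the enhancement constraint of $W_k(K)$ and $\Pi^Kv=0$, kill the face terms by \eqref{Hmdof2}, and then rerun the recursion of Lemma~\ref{eq:dofsunisolvent} via Lemma~\ref{lem:bimgunisolvence} to get $v=0$; your splitting $(-\Delta)^mv=p+p^{\perp}$ is just a reorganization of the paper's observation that $(v,q)_K=0$ for all $q\in\mathbb P_{m-1}(K)$. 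Where you genuinely go beyond the paper is the bookkeeping around the statement itself: by counting $\dim\widetilde V_k(K)=\dim\mathcal N_k(K)+\dim\mathbb P_{k-2m}^{\perp}(K)$ (the same Neumann-problem argument as for \eqref{eq:dim}, with the compatibility condition \eqref{eq:compatibility} unchanged) and packaging the d.o.f.\ together with $Q^{\perp}(v-\Pi^Kv)$ into one linear map $\Lambda$, you obtain bijectivity of the d.o.f.\ map on $W_k(K)$, i.e.\ both injectivity and the matching dimension count, whereas the paper's proof establishes only the injectivity half and leaves $\dim W_k(K)=\dim\mathcal N_k(K)$ implicit (as is customary in the VEM enhancement construction of Ahmad et al.). This costs you an extra well-posedness/dimension argument for $\widetilde V_k(K)$, but it makes the unisolvence claim self-contained; no step of your argument fails.
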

\begin{proof}
It is enough to only consider the case $2m\leq k<3m-1$.
Take any $v\in W_k(K)$ all of whose degrees of freedom~\eqref{Hmdof1}-\eqref{Hmdof2} disappear. Then $\Pi^Kv=0$. By the definition of $W_k(K)$, we have
\[
(v, q)_K=0\quad\forall~q\in\mathbb P_{k-2m}^{\perp}(K),
\]
which together with~\eqref{Hmdof1} implies
\[
(v, q)_K=0\quad\forall~q\in\mathbb P_{m-1}(K).
\]
Applying the argument in Lemma~\ref{eq:dofsunisolvent} to the space $\widetilde V_k(K)$ with vanishing degrees of freedom~\eqref{Hmdof2} and the last equation, we know that $v=0$.
\end{proof}

In the original space $V_k(K)$, the volume moment, cf. \eqref{Hmdof1}, is only defined up to degree $k-2m$ which cannot compute the $L^2$-projection to $\mathbb P_{m-1}$ when $k$ is small.
For $2m\leq k<3m-1$, a desirable property of the local virtual element space $W_k(K)$ is that the $L^2$ projection $Q_{m-1}^K$ is computable if all the degrees of freedom~\eqref{Hmdof1}-\eqref{Hmdof2} are known.
Indeed it follows from the definition of $W_k(K)$
\[
(Q_{m-1}^K-Q_{k-2m}^K)(v-\Pi^{K}v)=Q_{m-1}^K(I-Q_{k-2m}^K)(v-\Pi^{K}v)=0\quad\forall~v\in W_k(K),
\]
which provides a way to compute the $L^2$ projection
\begin{equation}\label{eq:20190605}
Q_{m-1}^Kv=Q_{k-2m}^Kv+Q_{m-1}^K\Pi^{K}v-Q_{k-2m}^K\Pi^{K}v\quad\forall~v\in W_k(K).
\end{equation}

%In the original space $V_k(K)$, the volume moment, cf. \eqref{Hmdof1}, is only defined up to degree $k-2m$ which cannot compute the $L^2$-projection to $\mathbb P_{m-1}$ when $k$ is small.
%A desirable property of the local virtual element space $W_k(K)$ is that now the $L^2$ projection $Q_{m-1}^K$ is computable if all the degrees of freedom~\eqref{Hmdof1}-\eqref{Hmdof2} are known: for any $q\in \mathbb P_{m-1}(K)$,
%\[
%(Q_{m-1}^Kv, q)=\begin{cases}
%(v, q) & \textrm{ if } q\in \mathbb P_{k-2m}(K),\\
%(\Pi^Kv, q) & \textrm{ if } q\in\mathbb P_{k-2m}^{\perp}(K).
%\end{cases}
%\]

Denote by $I_K: H^m(K)\to W_k(K)$ the canonical interpolation operator based on the degrees of freedom in~\eqref{Hmdof1}-\eqref{Hmdof2}. Namely given a $u\in H^m(K)$, $I_K u\in  W_k(K)$ so that $\chi (u) = \chi (I_K u)$ for all $\chi \in \mathcal N_k(K)$. As a direct corollary of Lemma \ref{lm:Pi}, we have the following identity.
\begin{corollary}
For any $v\in H^m(K)$, it holds
\begin{equation}\label{eq:20181012-1}
\Pi^K(v)=\Pi^K(I_Kv).
\end{equation}
\end{corollary}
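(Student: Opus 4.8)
The plan is to derive the identity $\Pi^K(v)=\Pi^K(I_Kv)$ directly from the observation, already proved in Lemma~\ref{lm:Pi}, that the projector $\Pi^K$ depends on $v$ only through the degrees of freedom in~\eqref{Hmdof1}-\eqref{Hmdof2}. First I would recall that the defining equations~\eqref{eq:H2projlocal1}-\eqref{eq:H2projlocal2} of $\Pi^K v$ can, after applying the generalized Green's identity~\eqref{eq:HmGreen} to the right hand side of~\eqref{eq:H2projlocal1} and invoking Lemma~\ref{lem:bimgunisolvence} with $s=m-r$ for the right hand side of~\eqref{eq:H2projlocal2}, be rewritten so that every appearance of $v$ enters only through the functionals $\tfrac{1}{|K|}(v,q)_K$ for $q\in\mathbb M_{k-2m}(K)$ and $\tfrac{1}{|F|^{(n-j-|\alpha|)/(n-j)}}(\tfrac{\partial^{|\alpha|}v}{\partial\nu_F^\alpha},q)_F$ for the admissible $F,\alpha,q$; that is, exactly the functionals in $\mathcal N_k(K)$.

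The key step is then the defining property of the canonical interpolant $I_Kv\in W_k(K)$: by construction $\chi(v)=\chi(I_Kv)$ for every $\chi\in\mathcal N_k(K)$. Since, as just argued, the linear system that determines $\Pi^K v\in\mathbb P_k(K)$ has its data assembled entirely from the values $\{\chi(v):\chi\in\mathcal N_k(K)\}$, replacing $v$ by $I_Kv$ leaves that data — hence the entire system, hence its unique solution — unchanged. Therefore $\Pi^K v=\Pi^K(I_Kv)$. One should note here that $I_K$ maps into $W_k(K)\supseteq V_k(K)$ and that $W_k(K)\subset H^m(K)$, so $\Pi^K(I_Kv)$ is well-defined; and that $\mathcal N_k(K)$ is unisolvent on $W_k(K)$ by Lemma~\ref{eq:dofsunisolventWK}, so $I_Kv$ exists and is unique.

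I do not expect a serious obstacle: the statement is essentially a repackaging of Lemma~\ref{lm:Pi} together with the definition of $I_K$. The only point requiring a little care is to make explicit that ``computable from the degrees of freedom'' in the conclusion of Lemma~\ref{lm:Pi} means precisely that the map $v\mapsto\Pi^K v$ factors through the array $(\chi(v))_{\chi\in\mathcal N_k(K)}$, so that any two functions in $H^m(K)$ with identical degrees of freedom — in particular $v$ and $I_Kv$ — have the same image under $\Pi^K$. Once this factoring is stated, the corollary follows in one line.
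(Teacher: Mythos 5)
Your argument is correct and is exactly the paper's intended reasoning: the corollary is stated there as a direct consequence of Lemma~\ref{lm:Pi}, i.e.\ the system \eqref{eq:H2projlocal1}-\eqref{eq:H2projlocal2} defining $\Pi^K$ is assembled solely from the degrees of freedom \eqref{Hmdof1}-\eqref{Hmdof2}, which $v$ and $I_Kv$ share by definition of the canonical interpolant. Your explicit remark that ``computable from the d.o.f.'' means the map $v\mapsto\Pi^Kv$ factors through $(\chi(v))_{\chi\in\mathcal N_k(K)}$ is just a careful spelling-out of the same one-line proof.
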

%\begin{proof}
%According to the definition of $\Pi^K$, we have
%\begin{align*}
%(\nabla^m\Pi^K((v-I_hv)|_K), \nabla^mq)_K&=(\nabla^m((v-I_hv)|_K), \nabla^mq)_K,\\
%\sum_{F\in\mathcal F^{r}(K)}Q_0^{F}(\nabla^{m-r}\Pi^K((v-I_hv)|_K))&=\sum_{F\in\mathcal F^{r}(K)}Q_0^{F}(\nabla^{m-r}((v-I_hv)|_K)),
%\end{align*}
%for any $q\in \mathbb P_k(K)$ and $r=1,\cdots, m$.
%Due to the Greens's identity \eqref{eq:HmGreen} and Lemma~\ref{lem:bimgunisolvence}, the terms in the right hand side of the last two equations are uniquely determined by the degrees of freedom~\eqref{Hmdof1}-\eqref{Hmdof2} of $v-I_hv$, which are zeros by the definitions of $I_h$. Hence we achieve \eqref{eq:20181012-1}.
%\end{proof}

\section{Discrete Method}

We will present the virtual element method for the polyharmonic equation based on the virtual element $(K, \mathcal N_k(K), V_k(K))$ or $(K, \mathcal N_k(K), W_k(K))$ when $L^2$-projection is needed.

\subsection{Discretization}
Consider the polyharmonic equation with homogeneous Dirichlet boundary condition
\begin{equation}\label{eq:polyharmonic}
\begin{cases}
(-\Delta)^mu=f\qquad\qquad\qquad\quad\text{in
}\Omega, \\
u=\frac{\partial u}{\partial \nu}=\cdots=\frac{\partial^{m-1} u}{\partial \nu^{m-1}}=0 \quad\text{on
}\partial\Omega,
\end{cases}
\end{equation}
%\begin{equation}
%\left\{
%\begin{aligned}
%&(-\Delta)^mu=f\quad\quad\;\text{in
%}\Omega, \\
%&u=\partial_{n}u=0\;\;\,\text{on }\partial\Omega,
%\end{aligned}
%\right.  \label{eq:polyharmonic}%
%\end{equation}
where $f\in L^2(\Omega)$ and $\Omega\subset \mathbb{R}^n$ with $1\leq m\leq n$. % and $\partial_{n}u$ is the normal derivative of $u$.
The weak formulation of the polyharmonic equation~\eqref{eq:polyharmonic} is to find $u\in H_0^m(\Omega)$ such that
\begin{equation}\label{eq:polyharmonicvar}
(\nabla^mu, \nabla^mv)=(f, v)\quad \forall~v\in H_0^m(\Omega).
\end{equation}
Since
\[
\|\nabla^mv\|_{0}\eqsim \|v\|_{m}\quad\textrm{and}\quad (f,v)\lesssim \|f\|_0\|v\|_{m}\quad\;\;\forall~v\in H_0^m(\Omega),
\]
it follows from the Lax-Milgram lemma that the variational formulation~\eqref{eq:polyharmonicvar} is well-posed.

Define the global virtual element space as
\begin{align*}
V_h:=\{v_h\in L^2(\Omega): & v_h|_K\in W_k(K)\textrm{ for each } K\in\mathcal T_h; \; \textrm{the degrees of freedom} \\
&(\frac{\partial^{|\alpha|}v_h}{\partial\nu_{F}^{\alpha}}, q)_F  \textrm{ are continuous through } F \textrm{ for all } \\
&q\in\mathbb P_{k-(2m-j-|\alpha|)}(F), \alpha\in A_{j} \textrm{ with } |\alpha|\leq m-j, F\in\mathcal F^{j}(K), \\
&\qquad\qquad\quad\textrm{ and } j=1,\cdots, m;\;(\frac{\partial^{|\alpha|}v_h}{\partial\nu_{F}^{\alpha}}, q)_F=0 \;\; \textrm{ if } F\subset\partial\Omega \}.
\end{align*}

Define the local bilinear form $a_{h,K}(\cdot, \cdot): W_k(K)\times W_k(K)\to\mathbb R$ as
\[
a_{h,K}(w, v):=(\nabla^m\Pi^Kw, \nabla^m\Pi^Kv)_K+S_K(w-\Pi^Kw, v-\Pi^Kv),
\]
where the stabilization term
\begin{equation}\label{eq:stabilization}
S_K(w, v):=h_K^{n-2m}\sum_{i=1}^{N_K}\chi_i(w)\chi_i(v)\quad\forall~ w, v\in W_k(K),
\end{equation}
where $\chi_i$ is the $i$th local degree of freedom in~\eqref{Hmdof1}-\eqref{Hmdof2} for $i=1,\cdots, N_K$.
The global bilinear form $a_h(\cdot, \cdot): V_h\times V_h\to\mathbb R$ is
\[
a_h(w_h, v_h):=\sum_{K\in\mathcal T_h}a_{h,K}(w_h, v_h).
\]

\begin{remark}\rm
The stabilization \eqref{eq:stabilization} resembles the original recipe in \cite{BeiraoBrezziCangianiManziniEtAl2013,BeiraoBrezziMariniRusso2014}.
This classical stabilization is easy to implement, and used to develop the discrete Galerkin orthogonality \eqref{eq:20181012-2} and  the stability bounds
in the Appendix~\ref{sec:appendixA}.
However, the stabilization \eqref{eq:stabilization} usually suffers from conditioning
and stability issues, especially for high-order $k$ and differential problems with
large $m$. Instead, several other stabilizations have been devised and investigated in \cite{Beirao-da-Veiga;Lovadina;Russo:2017Stability,BeiraodaVeigaDassiRusso2017,BeiraodaVeigaChernovMascottoRusso2018,Mascotto2018,DassiMascotto2018} to cure these issues.
The numerical results in \cite{Mascotto2018} show that these stabilizations have almost the same effect on the condition number of the stiffness matrix for $n=2$.
The classical stabilization, the D-recipe stabilization in \cite{BeiraodaVeigaDassiRusso2017} and the D-recipe stabilization with only boundary dofs were compared in three dimensions in \cite{DassiMascotto2018}, as a result the D-recipe stabilization outperforms the other two.

%I would also highlight that any stabilization equivalent to (4.2) would be ok

\end{remark}

Define $\Pi_h: V_h \to \mathbb P_{k}(\mathcal T_h)$ as $ (\Pi_h v) |_{K} : = \Pi^K (v|_{K})$ for each $K\in\mathcal T_h$, and
let $Q_h^l: L^2(\Omega)\to \mathbb P_{l}(\mathcal T_h)$ be the $L^2$-orthogonal projection onto $\mathbb P_{l}(\mathcal T_h)$: for any $v\in L^2(\Omega)$,
\[
(Q_h^lv)|_K:= Q_{l}^{K}(v|_K)\quad\forall~K\in\mathcal T_h.
\]
We compute the right hand side according to the following cases
\begin{equation}\label{eq:fvh}
\langle f, v_h \rangle : =
\begin{cases}
\, (f, \Pi_h v_h ), & m\leq k \leq 2m -1, \\
\, (f, Q_h^{m-1} v_h ), & 2m \leq k \leq 3m - 2,\\
\, (f, Q_h^{k-2m} v_h ), & 3m-1\leq k.
\end{cases}
\end{equation}
We will need this definition of the right hand side in order to get an optimal order of convergence, see Lemma~\ref{lem:consistencyerror}.
\begin{remark}\rm
When $m\leq k\leq 2m-1$, which is an important range for large $m$ as high order methods are harder to implement, there is no need to compute a new projection and no need to modify the local virtual element space. For $k\geq 2m$, however, an $L^2$-projection to higher degree polynomial space is needed to control the consistency error; see \S \ref{sec:consistency}.
\end{remark}

With previous preparations, we propose the nonconforming virtual element method for the polyharmonic equation~\eqref{eq:polyharmonic} in any dimension: find $u_h\in V_h$ such that
\begin{equation}\label{polyharmonicNoncfmVEM}
a_h(u_h, v_h)= \langle f, v_h \rangle \quad \forall~v_h\in V_h.
\end{equation}

In the rest of this section, we shall prove the well-posedness of the discretization \eqref{polyharmonicNoncfmVEM} by establishing the coercivity and continuity of the bilinear form $a_h(\cdot, \cdot)$.

\subsection{Mesh conditions}\label{subsec:meshcontitions}
We impose the following conditions on the mesh $\mathcal T_h$.
\begin{itemize}
 \item[(A1)] Each element $K\in \mathcal T_h$ and each face $F\in \mathcal F_h^r$ for $1\leq r\leq n-1$ is star-shaped with a uniformly bounded star-shaped constant.

 \item[(A2)] There exists a quasi-uniform simplicial mesh $\mathcal T_h^*$ such that each $K\in \mathcal T_h$ is a union of some simplexes in $\mathcal T_h^*$.
\end{itemize}

Notice that (A1) and (A2) imply ${\rm diam}(F) \eqsim {\rm diam} (K)$ for all $F\in \mathcal F^r(K), 1\leq r\leq n-1$.

For a star-shaped domain $D$, there exists a ball $B_D\subset D$ with radius $\rho_D h_{D}$ and a Lipschitz isomorphism $\Phi: B_D \to D$ such that $|\Phi|_{1,\infty, B_D}$ and $|\Phi^{-1}|_{1,\infty,D}$ are bounded by a constant depending only on the chunkiness parameter $\rho_D$. Then several trace inequalities of $H^1(D)$ can be established with a constant depending only on $\rho_D$ \cite[(2.18)]{BrennerSung2018}. In particular, we shall use %\begin{equation}\label{halftrace}
%|v|_{1/2,\partial D} \lesssim |v|_{1,D} \quad \forall v\in H^1(D)
%\end{equation}
%and
\begin{equation}\label{L2trace}
\|v\|_{0,\partial D}^2 \lesssim h_D^{-1}\|v\|_{0,D}^2 + h_D |v|_{1,D}^2 \quad \forall~v\in H^1(D).
\end{equation}
%Noting that $\partial D$ is only Lipschitz, higher order trace $H^{s}(\partial D)$ for $s>1$ may not be well defined. Instead trace inequalities can be applied to each face $F$ piecewisely. The lifting operator (right inverse of the trace operator) is continuous from $H^{1/2}(\partial D)\to H^1(D)$ with constant depending only on $\rho_D$ but again may not be even well defined for $H^{s}(\partial D)$ with $s>1$.

%Also by using the norm equivalence on finite dimension space on $B_K$ and transfering back to $K$, we can have the inverse inequality
%$$
%\| \nabla v\|_{K}\lesssim h_K^{-1}\| v \|_{K}, \quad \forall~v\in V_k(K),
%$$
%with constant depending only on $\rho_K$.

The condition (A2) is inspired by the virtual triangulation condition used in~\cite{ChenHuang2018,BrezziBuffaLipnikov2009}. The simplicial mesh $\mathcal T_h^*$ will serve as a bridge to transfer the results from finite element methods to virtual element methods.

Very recently, some geometric assumptions being the relaxation of conditions (A1)-(A2) were suggested in \cite{CaoChen2018,CaoChen2019} under which a refined error analysis was developed for the linear conforming and nonconforming virtual element methods of the Poisson equation, i.e. $H^1$ case. For high order $H^m, m>1$ elements, we will investigate such relaxation in future works.

\subsection{Weak continuity}
Based on Lemma~\ref{lem:bimgunisolvence}, the space $V_h$ has the weak continuity, that is for any $F\in\mathcal F_h^{1}$, $v_h\in V_h$ and nonnegative integer $s\leq m-1$
\begin{align}
\label{eq:weakcontinuitygradient}
(\llbracket\nabla_h^sv_h\rrbracket, \tau)_F &=0 \quad\forall~\tau\in \mathbb P_{k-(2m-1-s)}(F; \mathbb T_{n}(s)),\\
\label{eq:weakcontinuitygradient2}
Q_0^e(\llbracket\nabla_h^sv_h\rrbracket|_F) &=0 \quad\forall~e\in\mathcal F^{m-s-1}(F),
\end{align}
where $\nabla_h$ is the elementwise gradient with respect to the partition $\mathcal T_h$.
We shall derive some bound on the jump $\llbracket\nabla_h^sv_h\rrbracket$ using the weak continuity and the trace inequality.
%The required bound can be derived directly from the weak continuity \eqref{eq:weakcontinuitygradient} when $s\geq\frac{3m-k}{2}-1$.

By the weak continuity \eqref{eq:weakcontinuitygradient},
the mean value of $\nabla_h^sv_h$ over $F$ is continuous only when $s\geq 2m-1-k$.
For $s<2m-1-k$, the mean value of $\nabla_h^sv_h$ is merely continuous over some low-dimensional face of $F$, cf. \eqref{eq:weakcontinuitygradient2}.
As a concrete example, consider the Morley element in three dimensions. The mean value of $\nabla_hv_h$ over faces is continuous, but the mean value of $v_h$ is only continuous on edges rather than over faces.
%some moment (maybe over faces of $F$) of $\llbracket\nabla_h^sv_h\rrbracket$ is zero across each $F\in \mathcal{F}_h^{1}$.

Recall the following error estimates of the $L^2$ projection.
\begin{lemma}
Let $\ell\in\mathbb N$. For each $K\in\mathcal T_h$ and $\mathcal{F}^1(K)$, we have for any $v\in H^{\ell+1}(K)$ %$F\in\mathcal F_h^{1}$ with $F\subset\partial K$, we have
\begin{align}
\label{eq:PKerror}
\|v-Q_{\ell}^Kv\|_{0,K}&\lesssim h_K^{\ell+1}|v|_{\ell+1, K},\\
\label{eq:PFerror}
\|v-Q_{\ell}^Fv\|_{0,F}&\lesssim h_K^{\ell+1/2}|v|_{\ell+1, K}.
\end{align}
\end{lemma}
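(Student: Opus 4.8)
The statement to prove is the pair of estimates \eqref{eq:PKerror}--\eqref{eq:PFerror} for the $L^2$ projection $Q_\ell^K$ and $Q_\ell^F$.

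\medskip

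The plan is to reduce both bounds to a single scale-invariant statement on a reference configuration via the affine map $\Psi$ already introduced in the excerpt (namely $\hat K = \tfrac{1}{h_K}(K-\bs x_K)$ with $h_{\hat K}\eqsim 1$), and then invoke the Bramble--Hilbert/Deny--Lions lemma on $\hat K$. Concretely, the first step is the interior estimate \eqref{eq:PKerror}: since $Q_\ell^K$ is the $L^2$-orthogonal projection onto $\mathbb P_\ell(K)$, we have $\|v - Q_\ell^K v\|_{0,K} \le \|v - q\|_{0,K}$ for every $q\in\mathbb P_\ell(K)$, hence $\|v - Q_\ell^K v\|_{0,K} \le \inf_{q\in\mathbb P_\ell(K)}\|v-q\|_{0,K}$. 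Pulling back to $\hat K$, this infimum scales like $h_K^{n/2}\inf_{\hat q\in\mathbb P_\ell(\hat K)}\|\hat v - \hat q\|_{0,\hat K}$; by the Bramble--Hilbert lemma on the star-shaped domain $\hat K$ (whose chunkiness parameter is controlled by assumption (A1)), the latter is $\lesssim h_K^{n/2}|\hat v|_{\ell+1,\hat K}$, and since $|\hat v|_{\ell+1,\hat K} = h_K^{\ell+1 - n/2}|v|_{\ell+1,K}$ we recover \eqref{eq:PKerror}. The constant depends only on $\ell$, $n$ and the chunkiness parameter, as required by the paper's convention on $\lesssim$.

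\medskip

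For the trace estimate \eqref{eq:PFerror} I would first interpolate and then scale. Write $v - Q_\ell^F v = (v - Q_\ell^K v) - Q_\ell^F(v - Q_\ell^K v)$, using that $Q_\ell^F$ fixes polynomials of degree $\le\ell$ on $F$ and that $Q_\ell^K v|_F \in \mathbb P_\ell(F)$ so $Q_\ell^F(Q_\ell^K v) = Q_\ell^K v|_F$. Since $Q_\ell^F$ is an $L^2(F)$-orthogonal projection it is a contraction, so
\[
\|v - Q_\ell^F v\|_{0,F} \le \|v - Q_\ell^K v\|_{0,F} + \|Q_\ell^F(v-Q_\ell^K v)\|_{0,F} \le 2\|v - Q_\ell^K v\|_{0,F}.
\]
Now apply the trace inequality \eqref{L2trace} on $K$ (valid because $K$ is star-shaped, and noting $\mathrm{diam}(F)\eqsim\mathrm{diam}(K)$ by (A1)--(A2)) to $w := v - Q_\ell^K v$:
\[
\|w\|_{0,F}^2 \le \|w\|_{0,\partial K}^2 \lesssim h_K^{-1}\|w\|_{0,K}^2 + h_K|w|_{1,K}^2.
\]
The first term is bounded by $h_K^{-1}\cdot h_K^{2(\ell+1)}|v|_{\ell+1,K}^2 = h_K^{2\ell+1}|v|_{\ell+1,K}^2$ via \eqref{eq:PKerror}. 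For the second term, $|w|_{1,K} = |v - Q_\ell^K v|_{1,K}$; since $\nabla Q_\ell^K v \in \mathbb P_{\ell-1}(K;\mathbb R^n)$, a Bramble--Hilbert argument on $\hat K$ identical in structure to the one above (applied to $\nabla v$ rather than $v$, exploiting that $Q_\ell^K v - q$ for $q$ a suitable Taylor polynomial of $v$ has gradient close to $\nabla v$) gives $|v - Q_\ell^K v|_{1,K} \lesssim h_K^\ell |v|_{\ell+1,K}$; more directly one uses the standard estimate $\inf_{q\in\mathbb P_\ell(K)}\|v-q\|_{1,K}\lesssim h_K^\ell|v|_{\ell+1,K}$ together with the fact that $Q_\ell^K v$ is within a constant of the best $H^1$-approximant (again by a scaling plus norm-equivalence argument on $\hat K$, since on the fixed domain $\hat K$ all norms on the finite-dimensional space $\mathbb P_\ell(\hat K)$ are equivalent). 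Then $h_K|w|_{1,K}^2 \lesssim h_K\cdot h_K^{2\ell}|v|_{\ell+1,K}^2 = h_K^{2\ell+1}|v|_{\ell+1,K}^2$. Combining, $\|v - Q_\ell^F v\|_{0,F}^2 \lesssim h_K^{2\ell+1}|v|_{\ell+1,K}^2$, which is \eqref{eq:PFerror}.

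\medskip

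The only genuinely delicate point is the $H^1$-control of $v - Q_\ell^K v$: the $L^2$-projection is not the $H^1$-best approximation, so one cannot quote $\inf_{q}\|v-q\|_{1,K}$ directly. The clean fix is the scaling argument: on the reference element $\hat K$, the operator $\hat v \mapsto \hat v - \hat Q_\ell^{\hat K}\hat v$ is a bounded linear operator from $H^{\ell+1}(\hat K)$ to $H^1(\hat K)$ that annihilates $\mathbb P_\ell(\hat K)$ (since $\hat Q_\ell^{\hat K}$ fixes $\mathbb P_\ell$), hence by Bramble--Hilbert $|\hat v - \hat Q_\ell^{\hat K}\hat v|_{1,\hat K}\lesssim |\hat v|_{\ell+1,\hat K}$, and we scale back. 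This is the one place where uniformity of the constant in $h_K$ really uses assumption (A1). Everything else is bookkeeping with the affine change of variables and the contraction property of orthogonal projections.
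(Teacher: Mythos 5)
Your proof is correct. The paper states this lemma without proof, as a recalled standard fact (with constants depending on the chunkiness parameter and on $\ell$, as noted right after), and your argument is the standard one: best approximation plus Bramble--Hilbert under (A1) for \eqref{eq:PKerror}, and the trace inequality \eqref{L2trace} plus an $H^1$-bound on $v-Q_\ell^Kv$ for \eqref{eq:PFerror}; your resolution of the ``delicate point'' via boundedness of $I-\hat Q_\ell^{\hat K}:H^{\ell+1}(\hat K)\to H^1(\hat K)$ and polynomial invariance is sound. One remark: the second half can be shortened by using the quasi-interpolant $T_\ell^K$ of \eqref{eq:Bramble-Hilbert} as the competitor, i.e. $\|v-Q_\ell^Fv\|_{0,F}\leq\|v-(T_\ell^Kv)|_F\|_{0,F}$ since $(T_\ell^Kv)|_F\in\mathbb P_\ell(F)$, and then applying \eqref{L2trace} to $w=v-T_\ell^Kv$ with \eqref{eq:Bramble-Hilbert} for $j=0,1$; this sidesteps entirely the $H^1$-control of the $L^2$ projection that you correctly identify as the only non-routine step.
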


Then recall the Bramble-Hilbert Lemma (cf.~\cite[Lemma~4.3.8]{BrennerScott2008}).
\begin{lemma}
Let $\ell\in\mathbb N$ and $K\in\mathcal T_h\cup \mathcal T_h^*$. There exists a linear operator $T_{\ell}^K: L^1(K)\to \mathbb P_{\ell}(K)$ such that for any $v\in H^{\ell+1}(K)$,
\begin{equation}\label{eq:Bramble-Hilbert}
\|v-T_{\ell}^Kv\|_{j,K}\lesssim h_K^{\ell+1-j}|v|_{\ell+1, K}\quad\textrm{ for  }\; 0\leq j\leq \ell+1.
\end{equation}
\end{lemma}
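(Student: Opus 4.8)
The plan is to construct $T_\ell^K$ as an averaged Taylor polynomial and then obtain \eqref{eq:Bramble-Hilbert} by estimating the Taylor remainder, i.e. to reproduce the quantitative Bramble--Hilbert argument of \cite{BrennerScott2008}. First I would record the geometric input. By (A1) each $K\in\mathcal T_h$ is star-shaped with respect to a ball $B\subset K$ of radius $\eqsim h_K$ with chunkiness parameter $\rho_K$ uniformly bounded; each simplex $K\in\mathcal T_h^*$ is shape-regular by the quasi-uniformity in (A2) and hence also star-shaped with respect to such a ball. Fix $\phi\in C_0^\infty(B)$ with $\int_B\phi=1$ and $\|D^\gamma\phi\|_{0,\infty,B}\lesssim h_K^{-n-|\gamma|}$, and define
\[
T_\ell^K v(x):=\int_B\phi(y)\sum_{|\alpha|\le\ell}\frac{1}{\alpha!}\,D^\alpha v(y)\,(x-y)^\alpha\,{\rm d}y .
\]
Integrating by parts to move every derivative off $v$ and onto $\phi$ and the polynomial weight shows that $T_\ell^K$ extends to a linear operator $L^1(K)\to\mathbb P_\ell(K)$; it takes values in $\mathbb P_\ell(K)$ because the right-hand side is manifestly a polynomial of degree $\le\ell$ in $x$.

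Next I would verify the structural properties. Linearity is clear from the integral formula. For $p\in\mathbb P_\ell(K)$ the degree-$\ell$ Taylor expansion about any point $y$ reproduces $p$ exactly, so averaging against $\phi$ gives $T_\ell^K p=p$; thus $T_\ell^K$ is a projection onto $\mathbb P_\ell(K)$. Consequently \eqref{eq:Bramble-Hilbert} follows once we bound $\|v-T_\ell^K v\|_{j,K}$ for $0\le j\le\ell+1$.

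For the error estimate — the heart of the argument — I would use Taylor's theorem with integral remainder. For $v\in C^\infty(\overline K)$, which is dense in $H^{\ell+1}(K)$ since the star-shaped $K$ has the segment property, one has the pointwise identity
\[
v(x)-T_\ell^K v(x)=\sum_{|\alpha|=\ell+1}\frac{\ell+1}{\alpha!}\int_B k_\alpha(x,y)\,(x-y)^\alpha\,D^\alpha v(y)\,{\rm d}y ,
\]
where the kernel $k_\alpha$ is built from $\phi$ and satisfies the uniform bound $|k_\alpha(x,y)|\lesssim|x-y|^{-n}$; this is precisely where star-shapedness with respect to the ball $B$ enters, and the implied constant depends only on $\rho_K$, $n$, $\ell$. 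Differentiating this representation $j$ times in $x$ and using that $\partial_x^\beta$ applied to the degree-$\ell$ averaged Taylor polynomial of $v$ equals the degree-$(\ell-j)$ averaged Taylor polynomial of $D^\beta v$, one finds that every component of $D^j(v-T_\ell^K v)$ is again a remainder of the same type — of order $\ell+1-j$ in $D^{\ell+1}v$, with a kernel dominated by $|x-y|^{\,\ell+1-j-n}$. Since ${\rm diam}\,K\eqsim h_K$, estimating the resulting fractional-integral operator on $K$ by Young's inequality produces the factor $h_K^{\ell+1-j}$, i.e. $\|D^j(v-T_\ell^K v)\|_{0,K}\lesssim h_K^{\ell+1-j}|v|_{\ell+1,K}$; summing over $|\beta|\le j$ gives \eqref{eq:Bramble-Hilbert}, and a density argument removes the smoothness assumption on $v$.

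The main obstacle is the uniform kernel estimate $|k_\alpha(x,y)|\lesssim|x-y|^{-n}$ together with the bookkeeping that each $x$-differentiation lowers the order of the remainder by exactly one: one must check that all constants depend only on $n$, $\ell$, and the chunkiness parameter $\rho_K$, and not on the actual shape of the polytope $K$, so that \eqref{eq:Bramble-Hilbert} is genuinely uniform over the families $\{\mathcal T_h\}$ and $\{\mathcal T_h^*\}$. This is exactly the content of \cite[Lemma~4.3.8]{BrennerScott2008}, which we invoke.
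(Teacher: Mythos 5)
Your proposal is correct and coincides with the paper's treatment: the paper states this lemma as a recollection of the standard Bramble--Hilbert result and simply cites \cite[Lemma~4.3.8]{BrennerScott2008}, and your averaged-Taylor-polynomial argument is precisely the standard proof of that cited lemma, with the star-shapedness from (A1)--(A2) supplying the required uniform chunkiness. No gaps.
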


Notice that the constants in \eqref{eq:PKerror}-\eqref{eq:Bramble-Hilbert} depend on the star-shaped constant, i.e.
%the supremum of
the chunkiness parameter $\rho_K$, and also depend on the degree $\ell$. %$\mathrm{diam}(K)/ \rho_K$, where $\rho_K$ is the supremum of all the radius of the balls with respect to which the element $K$ is star-shaped.

Similarly, define $T_h: L^2(\Omega)\to \mathbb P_k(\mathcal T_h)$ as
\[
(T_hv)|_K:= T_k^{K}(v|_K)\quad\forall~K\in\mathcal T_h.
\]

\begin{lemma}\label{lem:20181013-1}
Given $F\in\mathcal F_h^{1}$ and positive integer $s<m$. Assume for any $e\in \mathcal F^r(F)$ with $r=0,1,\cdots,m-1-s$ %
\begin{equation}\label{eq:20181013-1}
\big\|\llbracket\nabla_h^sv_h\rrbracket\big\|_{0,e}
\lesssim \sum_{K\in\partial^{-1}F}h_{K}^{m-s-(r+1)/2}|v_h|_{m,K}\quad\forall~v_h\in V_h.
\end{equation}
Then we have for any $e\in \mathcal F^r(F)$ with $r=0,1,\cdots,m-s$
\begin{equation}\label{eq:20181013-2}
\big\|\llbracket\nabla_h^{s-1}v_h\rrbracket\big\|_{0,e}
\lesssim \sum_{K\in\partial^{-1}F}h_{K}^{m-s-(r-1)/2}|v_h|_{m,K}\quad\forall~v_h\in V_h.
\end{equation}
\end{lemma}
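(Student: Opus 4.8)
The plan is to prove \eqref{eq:20181013-2} by a downward induction on the face co-dimension $r$, starting from $r=m-s$ and working down to $r=0$, with \eqref{eq:20181013-1} as the available hypothesis at co-dimension level $s$. First I would fix $e\in\mathcal F^r(F)$ and the two elements $K^\pm\in\partial^{-1}F$ (the boundary case $F\subset\partial\Omega$ is handled identically, using the homogeneous boundary conditions built into $V_h$). On $e$ the quantity $\llbracket\nabla_h^{s-1}v_h\rrbracket$ is a jump of an $(s-1)$-tensor; the idea is to split it as $\llbracket\nabla_h^{s-1}v_h\rrbracket = \big(\nabla_h^{s-1}v_h|_{K^+} - \Pi\big) + \big(\Pi - \nabla_h^{s-1}v_h|_{K^-}\big)$ where $\Pi$ is a single $\mathbb P_0$ (or suitable polynomial) approximation of $\nabla^{s-1}v_h$ built via the Bramble–Hilbert operator $T^{K}_{\cdot}$ on, say, $K^+$; more precisely I would insert $Q_0^e(\nabla_h^{s-1}v_h|_{K^\pm})$ and use that the weak continuity \eqref{eq:weakcontinuitygradient2}, applied at the appropriate level, forces $Q_0^e\llbracket\nabla_h^{s-1}v_h\rrbracket = 0$ when $e\in\mathcal F^{m-(s-1)-1}(F)=\mathcal F^{m-s}(F)$, which is exactly the base case $r=m-s$. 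So at the base level the jump has vanishing mean on $e$, and a Poincaré/scaling inequality on $e$ bounds $\|\llbracket\nabla_h^{s-1}v_h\rrbracket\|_{0,e}$ by $h_e$ times the $L^2$-norm of a surface gradient of the jump along $e$, i.e. essentially $\|\llbracket\nabla_h^{s}v_h\rrbracket\|_{0,e}$ up to lower-dimensional boundary contributions, and \eqref{eq:20181013-1} with the corresponding $r$ closes it.

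For the inductive descent from $r$ to $r-1$ I would use the trace inequality \eqref{L2trace} on the face $e'\in\mathcal F^{r-1}(F)$ viewed as an $(n-r+1)$-dimensional star-shaped domain (legitimate by (A1)–(A2)): writing $w := \llbracket\nabla_h^{s-1}v_h\rrbracket$ restricted appropriately, $\|w\|_{0,\partial e'}^2 \lesssim h_{e'}^{-1}\|w\|_{0,e'}^2 + h_{e'}|w|_{1,e'}^2$. The term $|w|_{1,e'}$ involves one more derivative of $v_h$, i.e. it is controlled by $\|\llbracket\nabla_h^{s}v_h\rrbracket\|_{0,e'}$, for which hypothesis \eqref{eq:20181013-1} with face co-dimension $r-1$ gives $\lesssim \sum h_K^{m-s-r/2}|v_h|_{m,K}$, contributing $h_{e'}^{1/2}$ times that, i.e. the right power $h_K^{m-s-(r-1)/2}$. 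For the $\|w\|_{0,e'}$ term one needs the estimate on $e'$ itself, but $e'$ has co-dimension $r-1$, which is exactly the statement being proved at the next level down, so the induction is genuinely descending in $r$; alternatively one recovers $\|w\|_{0,e'}$ on the current level by the same mean-subtraction plus Poincaré trick, since by weak continuity the relevant projected jump vanishes. Throughout, the faces $e\subset\overline F$ with their various $\partial^{-1}$ neighbors are traversed and the local bounds summed, using ${\rm diam}(e)\eqsim {\rm diam}(F)\eqsim h_K$.

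The main obstacle I anticipate is bookkeeping of the tensor/normal-direction structure: $\llbracket\nabla_h^{s-1}v_h\rrbracket$ is defined in the paper only through the $\nu_{F,1}$-contraction, so when passing from $F$ to a sub-face $e$ one must carefully relate $\nabla_h^{s-1}v_h|_e$ and its normal/tangential components, invoke that $\nu_{F,i}$ is a linear combination of the $\nu_{e,\cdot}$ (the fact used at the end of Lemma~\ref{lem:20190605-1}), and match the weak-continuity relations \eqref{eq:weakcontinuitygradient}–\eqref{eq:weakcontinuitygradient2} to exactly the polynomial degree $k-(2m-1-s)$ available. The second delicate point is verifying the exponent arithmetic: each application of a Poincaré inequality on an $(n-r)$-dimensional face gains one factor $h\sim h_K$, and each trace inequality \eqref{L2trace} trades an $h^{-1/2}$ on the $e'$-side for an $h^{1/2}$ weight, and one must check these combine to turn the exponent $m-s-(r+1)/2$ in the hypothesis into $m-s-(r-1)/2$ in the conclusion; I would set this up once carefully and note it propagates uniformly. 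Nothing here requires a new idea beyond \eqref{L2trace}, Bramble–Hilbert, Poincaré on faces, and the weak continuity already established — the work is entirely in organizing the multi-face, multi-tensor induction cleanly.
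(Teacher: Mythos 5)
There is a genuine gap, and it sits precisely where the real work of this lemma lies. First, your base case at $r=m-s$ does not close: after using \eqref{eq:weakcontinuitygradient2} to kill the mean of $\llbracket\nabla_h^{s-1}v_h\rrbracket$ on $e\in\mathcal F^{m-s}(F)$, your Poincar\'e step on $e$ produces $\big\|\llbracket\nabla_h^{s}v_h\rrbracket\big\|_{0,e}$ with $e$ of relative co-dimension $m-s$, whereas the hypothesis \eqref{eq:20181013-1} is assumed only for $r=0,1,\cdots,m-1-s$; so ``\eqref{eq:20181013-1} with the corresponding $r$'' is simply not available (and when $m-s$ is small, e.g. $s=m-1$, the trace of $\nabla_h^{s}v_h\in H^{m-s}(K)$ on such a face of co-dimension $m-s+1$ in $\mathbb R^n$ need not even be well defined). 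Second, the claimed descent from $r$ to $r-1$ is circular: the trace inequality \eqref{L2trace} bounds the norm on $\partial e'$ by norms on $e'$, i.e. it derives the co-dimension-$r$ estimate from the co-dimension-$(r-1)$ one, which is an \emph{upward} step; your recursion therefore needs the estimate at ever lower co-dimension and bottoms out at $r=0$ (on $F$ itself), not at $r=m-s$, so your base case does not feed it. The fallback you offer --- ``mean-subtraction plus Poincar\'e'' on the intermediate face $e'$ --- is not justified either, because the weak continuity gives a vanishing mean of $\llbracket\nabla_h^{s-1}v_h\rrbracket$ only on faces of relative co-dimension exactly $m-s$; for small $k$ the mean over $F$ or over intermediate faces is not continuous (this is the whole point of the lemma), and the Poincar\'e inequality there would again require $\nabla^s$-jump control outside the range of \eqref{eq:20181013-1}.

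The missing ingredient is the paper's treatment of the case $r=0$, which transfers the vanishing-mean information from the deep face back to $F$ instead of working on the deep face: choose a chain $F=e_0\supset e_1\supset\cdots\supset e_{m-s}$ with $e_i\in\mathcal F^1(e_{i-1})$, subtract $Q_0^{F}(\llbracket\nabla_h^{s-1}v_h\rrbracket)$, and telescope \eqref{L2trace} along the chain; each step costs a factor $h^{1/2}$ and generates only $\big\|\llbracket\nabla_h^{s}v_h\rrbracket\big\|_{0,e_{i-1}}$ with $i-1\leq m-s-1$, which \eqref{eq:20181013-1} does cover. Then, since $Q_0^{e_{m-s}}(\llbracket\nabla_h^{s-1}v_h\rrbracket)=0$, one writes the norm on $F$ as the norm of $\llbracket\nabla_h^{s-1}v_h\rrbracket-Q_0^{F}(\llbracket\nabla_h^{s-1}v_h\rrbracket)$ plus the norm of its $Q_0^{e_{m-s}}$-projection, and the latter is a constant tensor whose $L^2(F)$-norm equals $h^{(m-s)/2}$ times its norm on $e_{m-s}$ by pure scaling, so the telescoped bound applies. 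Once $r=0$ is in hand, the cases $r=1,\cdots,m-s$ follow by an upward induction with a single application of \eqref{L2trace} --- exactly your exponent computation, read in the correct direction, with \eqref{eq:20181013-1} used at co-dimension $j\leq m-s-1$. Your $h$-power bookkeeping and the tensor/normal-direction caveats are fine; what fails as proposed are the base case and the direction of the induction.
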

\begin{proof}
We use the mathematical induction on $r$ to prove \eqref{eq:20181013-2}. First consider $r=0$.
Take some $e_i\in \mathcal F^i(F)$ for $i=1,\cdots, m-s$ such that $e_i\in\mathcal F^1(e_{i-1})$ with $e_0=F$. In the following we shall use $Q_0^e$ the $L^2$-orthogonal projection onto the constant tensor space on $e$ which can be understood as a tensor defined on the whole space.
Employing the trace inequality \eqref{L2trace}, we get from \eqref{eq:20181013-1} with $r=i-1$
\begin{align*}
&h_F^{i/2}\big\|\llbracket\nabla_h^{s-1}v_h\rrbracket-Q_0^{F}(\llbracket\nabla_h^{s-1}v_h\rrbracket)\big\|_{0,e_{i}}
\\
\lesssim & h_F^{(i-1)/2}\big\|\llbracket\nabla_h^{s-1}v_h\rrbracket-Q_0^{F}(\llbracket\nabla_h^{s-1}v_h\rrbracket)\big\|_{0,e_{i-1}} +
h_F^{(i+1)/2}\big\|\llbracket\nabla_h^{s}v_h\rrbracket\big\|_{0,e_{i-1}} \\
\lesssim & h_F^{(i-1)/2}\big\|\llbracket\nabla_h^{s-1}v_h\rrbracket-Q_0^{F}(\llbracket\nabla_h^{s-1}v_h\rrbracket)\big\|_{0,e_{i-1}} +
\sum_{K\in\partial^{-1}F}h_{K}^{m-s+1/2}|v_h|_{m,K}.
\end{align*}
By this recursive inequality and the approximation properties of the $L^2$ projection, it holds
\begin{align*}
&h_F^{(m-s)/2}\big\|\llbracket\nabla_h^{s-1}v_h\rrbracket-Q_0^{F}(\llbracket\nabla_h^{s-1}v_h\rrbracket)\big\|_{0,e_{m-s}}
\\
\lesssim & \big\|\llbracket\nabla_h^{s-1}v_h\rrbracket-Q_0^{F}(\llbracket\nabla_h^{s-1}v_h\rrbracket)\big\|_{0,F} +
\sum_{K\in\partial^{-1}F}h_{K}^{m-s+1/2}|v_h|_{m,K} \\
\lesssim & h_F\big\|\llbracket\nabla_h^{s}v_h\rrbracket\big\|_{0,F} +
\sum_{K\in\partial^{-1}F}h_{K}^{m-s+1/2}|v_h|_{m,K}.
%\lesssim \sum_{K\in\partial^{-1}F}h_{K}^{m-s+1/2}|v_h|_{m,K}.
\end{align*}
On the other side, we have from \eqref{eq:weakcontinuitygradient2}
\begin{align*}
&\big\|\llbracket\nabla_h^{s-1}v_h\rrbracket\big\|_{0,F}
=\big\|\llbracket\nabla_h^{s-1}v_h\rrbracket-Q_0^{e_{m-s}}(\llbracket\nabla_h^{s-1}v_h\rrbracket)\big\|_{0,F}\\
=&\big\|\llbracket\nabla_h^{s-1}v_h\rrbracket-Q_0^{F}(\llbracket\nabla_h^{s-1}v_h\rrbracket)-Q_0^{e_{m-s}}(\llbracket\nabla_h^{s-1}v_h\rrbracket- Q_0^{F}(\llbracket\nabla_h^{s-1}v_h\rrbracket))\big\|_{0,F}\\
\leq& \big\|\llbracket\nabla_h^{s-1}v_h\rrbracket-Q_0^{F}(\llbracket\nabla_h^{s-1}v_h\rrbracket)\big\|_{0,F} + \big\|Q_0^{e_{m-s}}(\llbracket\nabla_h^{s-1}v_h\rrbracket-Q_0^{F}(\llbracket\nabla_h^{s-1}v_h\rrbracket))\big\|_{0,F} \\
\lesssim & h_F\big\|\llbracket\nabla_h^{s}v_h\rrbracket\big\|_{0,F} + h_F^{(m-s)/2}\big\|Q_0^{e_{m-s}}(\llbracket\nabla_h^{s-1}v_h\rrbracket-Q_0^{F}(\llbracket\nabla_h^{s-1}v_h\rrbracket))\big\|_{0,e_{m-s}} \\
\leq & h_F\big\|\llbracket\nabla_h^{s}v_h\rrbracket\big\|_{0,F} + h_F^{(m-s)/2}\big\|\llbracket\nabla_h^{s-1}v_h\rrbracket-Q_0^{F}(\llbracket\nabla_h^{s-1}v_h\rrbracket)\big\|_{0,e_{m-s}}.
\end{align*}
Hence \eqref{eq:20181013-2} with $r=0$ follows from the last two inequalities and \eqref{eq:20181013-1} with $r=0$.

Assume \eqref{eq:20181013-2} holds with $r=j<m-s$. Let $e\in \mathcal F^{j+1}(F)$.
Take some $e_j\in \mathcal F^{j}(F)$ satisfying $e\in\mathcal F^{1}(e_j)$. Using the trace inequality \eqref{L2trace} again, we know
\[
\big\|\llbracket\nabla_h^{s-1}v_h\rrbracket\big\|_{0,e}\lesssim h_e^{-1/2}\big\|\llbracket\nabla_h^{s-1}v_h\rrbracket\big\|_{0,e_j} + h_e^{1/2}\big\|\llbracket\nabla_h^{s}v_h\rrbracket\big\|_{0,e_j},
\]
which combined with the assumptions means \eqref{eq:20181013-2} is true with $r=j+1$.
\end{proof}

Again consider the Morley-Wang-Xu element in three dimensions \cite{WangXu2006}, i.e. $m=2$ and $s=1$.
The inequality \eqref{eq:20181013-1} is just
\[
\big\|\llbracket\nabla_hv_h\rrbracket\big\|_{0,F}
\lesssim \sum_{K\in\partial^{-1}F}h_{K}^{1/2}|v_h|_{2,K}\quad\forall~v_h\in V_h,
\]
for each face $F\in\mathcal F_h^{1}$.
Then by Lemma~\ref{lem:20181013-1} we will get from \eqref{eq:weakcontinuitygradient2}
\[
\big\|\llbracket v_h\rrbracket\big\|_{0,F}
+h_{K}^{1/2}\big\|\llbracket v_h\rrbracket\big\|_{0,e}\lesssim \sum_{K\in\partial^{-1}F}h_{K}^{3/2}|v_h|_{2,K}\quad\forall~v_h\in V_h,
\]
for any face $F\in\mathcal F_h^{1}$ and any edge $e\in\mathcal F_h^{2}$.
We refer to \cite[Lemma~5]{WangXu2006} for these estimates on tetrahedra.

\begin{lemma}
For each $F\in\mathcal F_h^{1}$ and nonnegative integer $s<m$, it holds %
\begin{equation}\label{eq:weakcontinuityestimate}
\big\|\llbracket\nabla_h^sv_h\rrbracket\big\|_{0,F}
\lesssim \sum_{K\in\partial^{-1}F}h_{K}^{m-s-1/2}|v_h|_{m,K}\quad\forall~v_h\in V_h.
\end{equation}
\end{lemma}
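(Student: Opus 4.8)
The plan is to prove the estimate by downward induction on $s$, from $s=m-1$ down to $s=0$, using Lemma~\ref{lem:20181013-1} for the inductive step and a direct computation for the base case. Concretely, I would establish the slightly stronger family of bounds: for $r=0,1,\cdots,m-1-s$ and every $e\in\mathcal F^r(F)$,
\begin{equation*}
\big\|\llbracket\nabla_h^sv_h\rrbracket\big\|_{0,e}\lesssim\sum_{K\in\partial^{-1}F}h_K^{m-s-(r+1)/2}|v_h|_{m,K}\qquad\forall~v_h\in V_h.
\end{equation*}
Taking $r=0$ in this family recovers exactly \eqref{eq:weakcontinuityestimate}, and the family is precisely the hypothesis \eqref{eq:20181013-1} needed to invoke Lemma~\ref{lem:20181013-1}.

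For the base case $s=m-1$, only $r=0$ is present, so it suffices to bound $\big\|\llbracket\nabla_h^{m-1}v_h\rrbracket\big\|_{0,F}$. I would first observe that, since $k\geq m$, the constant tensors belong to $\mathbb P_{k-m}(F;\mathbb T_n(m-1))$, so the weak continuity \eqref{eq:weakcontinuitygradient} with $s=m-1$ forces $Q_0^F\llbracket\nabla_h^{m-1}v_h\rrbracket=0$. Then I would write $\llbracket\nabla_h^{m-1}v_h\rrbracket=(I-Q_0^F)\llbracket\nabla_h^{m-1}v_h\rrbracket$, distribute the jump over the (at most two) elements $K\in\partial^{-1}F$, and use that $Q_0^F$ is the best $L^2(F)$ constant approximation to replace the mean over $F$ by $Q_0^K(\nabla^{m-1}v_h|_K)$ on each side; applying \eqref{eq:PFerror} with $\ell=0$ componentwise gives $\|\nabla^{m-1}v_h|_K-Q_0^K(\nabla^{m-1}v_h|_K)\|_{0,F}\lesssim h_K^{1/2}|\nabla^{m-1}v_h|_{1,K}=h_K^{1/2}|v_h|_{m,K}$, and summing yields the base case.

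For the inductive step, assume the family of bounds holds at level $s$ with $1\leq s\leq m-1$. Then the hypothesis of Lemma~\ref{lem:20181013-1} is met, and its conclusion \eqref{eq:20181013-2} states that for $r=0,1,\cdots,m-s$ and $e\in\mathcal F^r(F)$ one has $\big\|\llbracket\nabla_h^{s-1}v_h\rrbracket\big\|_{0,e}\lesssim\sum_{K\in\partial^{-1}F}h_K^{m-s-(r-1)/2}|v_h|_{m,K}$. Reindexing $s\mapsto s-1$ and using $m-1-(s-1)=m-s$ together with $m-s-(r-1)/2=m-(s-1)-(r+1)/2$, this is exactly the family of bounds at level $s-1$. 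Iterating down to $s=0$ completes the proof.

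I do not expect any individual estimate to be the bottleneck; rather, the delicate part is the exponent and codimension bookkeeping needed to verify that the output of Lemma~\ref{lem:20181013-1} matches the inductive hypothesis verbatim, in particular that $h_K^{m-s-(r-1)/2}$ coincides with $h_K^{m-(s-1)-(r+1)/2}$ and that the admissible range of $r$ shifts correctly from $\{0,\cdots,m-1-s\}$ to $\{0,\cdots,m-s\}$. A secondary technical point is that all trace and $L^2$-projection estimates must be read componentwise for the tensor-valued quantities $\nabla_h^sv_h$, with constants permitted to depend on $m$ and $n$ but not on $h$; the mesh assumptions (A1)--(A2) are what guarantee these constants are uniform.
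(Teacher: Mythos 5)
Your proposal is correct and follows essentially the same route as the paper: prove the stronger family of bounds on all $e\in\mathcal F^r(F)$ by downward induction on $s$, using Lemma~\ref{lem:20181013-1} for the inductive step and reducing to the single base estimate $\big\|\llbracket\nabla_h^{m-1}v_h\rrbracket\big\|_{0,F}\lesssim\sum_{K\in\partial^{-1}F}h_K^{1/2}|v_h|_{m,K}$, obtained from the vanishing face mean of the jump together with \eqref{eq:PFerror}. The only cosmetic difference is that you derive $Q_0^F\llbracket\nabla_h^{m-1}v_h\rrbracket=0$ from \eqref{eq:weakcontinuitygradient} (using $k\geq m$), whereas the paper cites \eqref{eq:weakcontinuitygradient2}; both yield the same fact, and your exponent bookkeeping for the inductive step checks out.
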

\begin{proof}
It is sufficient to prove that for $s=m-1,m-2,\cdots,0$ and any $e\in \mathcal F^r(F)$ with $r=0,1,\cdots,m-1-s$, it hold
\[
\big\|\llbracket\nabla_h^sv_h\rrbracket\big\|_{0,e}
\lesssim \sum_{K\in\partial^{-1}F}h_{K}^{m-s-(r+1)/2}|v_h|_{m,K}\quad\forall~v_h\in V_h.
\]
According to Lemma~\ref{lem:20181013-1} and the mathematical induction, we only need to show
\[
\big\|\llbracket\nabla_h^{m-1}v_h\rrbracket\big\|_{0,F}
\lesssim \sum_{K\in\partial^{-1}F}h_{K}^{1/2}|v_h|_{m,K}\quad\forall~v_h\in V_h.
\]
In fact, due to \eqref{eq:weakcontinuitygradient2} and \eqref{eq:PFerror}, we get
\[
\big\|\llbracket\nabla_h^{m-1}v_h\rrbracket\big\|_{0,F}=\big\|\llbracket\nabla_h^{m-1}v_h\rrbracket-Q_0^F(\llbracket\nabla_h^{m-1}v_h\rrbracket)\big\|_{0,F}\lesssim \sum_{K\in\partial^{-1}F}h_{K}^{1/2}|v_h|_{m,K}.
\]
This ends the proof.
\end{proof}

Given the virtual triangulation $\mathcal T_h^{\ast}$, for each nonnegative integer $r<m$, define the tensorial $(m-r)$-th order Lagrange element space associated with  $\mathcal T_h^{\ast}$
\[
S_h^{m,r}:=\{\tau_h\in H_0^1(\Omega; \mathbb T_{n}(r)): \tau_h|_K\in\mathbb P_{m-r}(G; \mathbb T_{n}(r))\quad\forall~K\in\mathcal T_h^{\ast}\}.
\]

\begin{lemma}\label{lem:connectionerror}
Let $r=0,1,\cdots, m-1$.
For any $v_h\in V_h$, there exists $\tau_r= \tau_r (v_h)\in S_h^{m,r}$ such that
\begin{equation}\label{eq:connectionerror}
|\nabla_h^rv_h-\tau_r|_{j,h}\lesssim h^{m-r-j}|v_h|_{m,h}
\quad \textrm{ for }\; j=0, 1, \cdots, m-r.
\end{equation}
\end{lemma}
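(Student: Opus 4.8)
The plan is to construct $\tau_r$ as an averaging (enriching-type) operator. Since $w:=\nabla_h^rv_h$ is only piecewise $H^{m-r}$ and hence may fail to admit point values, the averaging is not applied to $w$ directly but to a piecewise polynomial approximation of $w$; the resulting error is then controlled by combining the Bramble--Hilbert estimate \eqref{eq:Bramble-Hilbert} with the weak-continuity jump bound \eqref{eq:weakcontinuityestimate}, which is precisely the tool tailored to this situation (note $r<m$, so \eqref{eq:weakcontinuityestimate} applies with $s=r$).

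First I would set $p_K:=T_{m-r-1}^K(w|_K)\in\mathbb P_{m-r-1}(K;\mathbb T_n(r))\subset\mathbb P_{m-r}(K;\mathbb T_n(r))$ on each $K\in\mathcal T_h$ (componentwise for the tensor $w$). Since the $(m-r)$-th order derivatives of $\nabla^rv_h$ are $m$-th order derivatives of $v_h$, one has $|w|_{m-r,K}\eqsim|v_h|_{m,K}$, so \eqref{eq:Bramble-Hilbert} gives $\|w-p_K\|_{j,K}\lesssim h_K^{m-r-j}|v_h|_{m,K}$ for $0\le j\le m-r$. Letting $\{z\}$ denote the nodes of the degree-$(m-r)$ tensorial Lagrange element on $\mathcal T_h^{\ast}$, I would define $\tau_r\in S_h^{m,r}$ by its nodal values: $\tau_r(z):=0$ if $z\in\partial\Omega$, and $\tau_r(z):=|\mathcal T_h(z)|^{-1}\sum_{K\in\mathcal T_h(z)}p_K(z)$ otherwise, with $\mathcal T_h(z):=\{K\in\mathcal T_h:z\in\overline K\}$. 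Averaging the nodal values produces a continuous tensor field vanishing on $\partial\Omega$, hence $\tau_r\in S_h^{m,r}$, and $\tau_r(z)=p_K(z)$ whenever $z$ lies in the closure of a single coarse element.

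To estimate the seminorm I would split $w-\tau_r=(w-p_K)+(p_K-\tau_r)$ on each $K$; the first summand is controlled by the Bramble--Hilbert bound above. For $p_K-\tau_r$, on each simplex $T\subset K$ of $\mathcal T_h^{\ast}$ it is a polynomial of degree $\le m-r$, so norm equivalence on finite-dimensional spaces together with scaling gives $\|p_K-\tau_r\|_{j,T}^2\lesssim h_T^{\,n-2j}\sum_{z\in T}|p_K(z)-\tau_r(z)|^2$, where only nodes on $\partial K$ contribute. Each such nodal difference is an average of differences $p_K(z)-p_{K''}(z)$ over coarse neighbours $K''$, and telescoping through a bounded chain of $(n-1)$-faces reduces matters to a pair $K^\pm$ sharing a face $F\in\mathcal F_h^1$. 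There the inverse inequality on $F$ gives $|p_{K^+}(z)-p_{K^-}(z)|\lesssim h_F^{-(n-1)/2}\|p_{K^+}-p_{K^-}\|_{0,F}$, and by the triangle inequality $\|p_{K^+}-p_{K^-}\|_{0,F}\le\sum_{\pm}\|p_{K^\pm}-w|_{K^\pm}\|_{0,F}+\|\llbracket w\rrbracket\|_{0,F}$; the trace inequality \eqref{L2trace} combined with the Bramble--Hilbert bound controls the first two terms by $h_K^{m-r-1/2}|v_h|_{m,K}$, while \eqref{eq:weakcontinuityestimate} with $s=r$ controls the jump by $\sum_{K\in\partial^{-1}F}h_K^{m-r-1/2}|v_h|_{m,K}$. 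Boundary nodes are handled identically, since $\llbracket w\rrbracket|_F=w|_F$ for $F\subset\partial\Omega$ and the vanishing boundary degrees of freedom of $V_h$ are already built into \eqref{eq:weakcontinuityestimate}. Using $h_F\eqsim h_K\eqsim h_T$ from (A1)--(A2) and the quasi-uniformity of $\mathcal T_h^{\ast}$, a count of powers of $h$ yields $\|p_K-\tau_r\|_{j,K}^2\lesssim h^{2(m-r-j)}\sum_{K''}|v_h|_{m,K''}^2$ with the sum over a bounded neighbourhood of $K$; summing over $K\in\mathcal T_h$ and using finite overlap gives $|\nabla_h^rv_h-\tau_r|_{j,h}\lesssim h^{m-r-j}|v_h|_{m,h}$, i.e.\ \eqref{eq:connectionerror}.

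The main obstacle is the one noted at the outset: because $\nabla_h^rv_h$ is only elementwise $H^{m-r}$, the averaging must act on the polynomials $p_K$, and then two routine-but-delicate points must be checked --- (i) nodes lying on low-codimensional faces shared by many coarse elements, which is dealt with by telescoping over $(n-1)$-faces; and (ii) the bookkeeping showing that the exponents of $h$ from the trace inequality, the inverse inequality on $F$, and the jump bound \eqref{eq:weakcontinuityestimate} combine exactly to $h^{m-r-j}$. Point (ii) is decisive, and it is precisely the weak continuity of $V_h$ expressed by \eqref{eq:weakcontinuityestimate} that makes it work.
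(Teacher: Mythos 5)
Your proof is correct and follows essentially the same route as the paper: approximate $\nabla_h^r v_h$ elementwise by a Bramble--Hilbert polynomial, pass to a continuous Lagrange field whose distance to that piecewise polynomial is controlled by face jumps, and bound the jumps by the weak-continuity estimate \eqref{eq:weakcontinuityestimate}. The only difference is that the paper obtains the averaged $\tau_r$ and the jump-controlled bound $|w_h-\tau_r|_{j,h}^2\lesssim\sum_F h_F^{1-2j}\|\llbracket w_h\rrbracket\|_{0,F}^2$ by citing Lemma~3.1 of \cite{WangXu2013} (with the intermediate polynomial defined on the virtual triangulation $\mathcal T_h^{\ast}$), whereas you inline the proof of that lemma via explicit nodal averaging and telescoping.
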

\begin{proof}
Let $w_h\in L^2(\Omega; \mathbb T_{n}(r))$ be defined as
\[
w_h|_K=T_{m-r-1}^K(\nabla^r(v_h|_K))\quad\forall~K\in\mathcal T_h^{\ast}.
\]
Since $w_h$ is a piecewise tensorial polynomial, by Lemma 3.1 in~\cite{WangXu2013},
there exists $\tau_r\in S_h^{m,r}$ such that
\[
|w_h-\tau_r|_{j,h}^2\lesssim \sum_{F\in\mathcal F_h^1(\mathcal T_h^{\ast})}h_F^{1-2j}\|\llbracket w_h\rrbracket\|_{0,F}^2, %= \sum_{F\in\mathcal F_h^1(\mathcal T_h)}h_F^{1-2j}\|\llbracket w_h\rrbracket\|_{0,F}^2,
\]
where $\mathcal F_h^1(\mathcal T_h^{\ast})$ is the set of all $(n-1)$-dimensional faces of the partition $\mathcal T_h^{\ast}$. Then it follows from~\eqref{eq:Bramble-Hilbert} and \eqref{eq:weakcontinuityestimate}
\begin{align*}
|w_h-\tau_r|_{j,h}^2&\lesssim \sum_{F\in\mathcal F_h^1(\mathcal T_h^{\ast})}h_F^{1-2j}\|\llbracket w_h-\nabla_h^rv_h\rrbracket\|_{0,F}^2 + \sum_{F\in\mathcal F_h^1}h_F^{1-2j}\|\llbracket \nabla_h^rv_h\rrbracket\|_{0,F}^2 \\
&\lesssim h^{2(m-r-j)}|v_h|_{m,h}^2.
\end{align*}
Here we have used the fact that the jump $\llbracket \nabla_h^rv_h\rrbracket$ is zero on $F\in \mathcal F_h^1(\mathcal T_h^{\ast})\setminus \mathcal F_h^1(\mathcal T_h)$.
Applying~\eqref{eq:Bramble-Hilbert} again gives
\[
|\nabla_h^rv_h-w_h|_{j,h}\lesssim h^{m-r-j}|v_h|_{m,h}.
\]
Finally combining the last two inequalities indicate~\eqref{eq:connectionerror}.
\end{proof}

\begin{lemma}
We have the discrete Poincar\'e inequality
\begin{equation}\label{eq:poincareinequality}
\|v_h\|_{m,h}\lesssim |v_h|_{m,h}\quad \forall~v_h\in V_h.
\end{equation}
\end{lemma}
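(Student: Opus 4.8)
The plan is to bootstrap from the broken seminorm $|v_h|_{m,h}$ down to control of all lower-order broken seminorms $|v_h|_{j,h}$ for $0\le j\le m-1$, and in particular the full norm $\|v_h\|_{m,h}$, by comparing $v_h$ with a genuinely $H^1$-conforming piecewise polynomial object and invoking the continuous Poincar\'e inequality there. The key tool is Lemma~\ref{lem:connectionerror}: for each $r=0,1,\dots,m-1$ it produces $\tau_r=\tau_r(v_h)\in S_h^{m,r}\subset H_0^1(\Omega;\mathbb T_n(r))$ with $|\nabla_h^r v_h-\tau_r|_{j,h}\lesssim h^{m-r-j}|v_h|_{m,h}$ for $0\le j\le m-r$.

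First I would fix $r$ and use the triangle inequality $|v_h|_{j+r,h}=|\nabla_h^r v_h|_{j,h}\le |\nabla_h^r v_h-\tau_r|_{j,h}+|\tau_r|_{j,h}$. The first term is $\lesssim h^{m-r-j}|v_h|_{m,h}\lesssim |v_h|_{m,h}$ (since $h\lesssim 1$). For the second term, because $\tau_r\in H_0^1(\Omega;\mathbb T_n(r))$ is a genuine conforming field, I can apply the (componentwise, tensor-valued) continuous Poincar\'e/Friedrichs inequality on $\Omega$: $\|\tau_r\|_{0}\lesssim |\tau_r|_{1,h}$ — but more usefully I want $|\tau_r|_{j,h}$ for $j\ge 1$ bounded in terms of something I control. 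Actually the cleanest route is to take $j=0$ and $j=1$: from $\tau_0\in H_0^1(\Omega)$, the scalar Poincar\'e inequality gives $\|\tau_0\|_{0}\lesssim |\tau_0|_{1,h}$, and then $\|v_h\|_0\le\|v_h-\tau_0\|_0+\|\tau_0\|_0\lesssim h^{m}|v_h|_{m,h}+|\tau_0|_{1,h}$. To bound $|\tau_0|_{1,h}$ I use $\tau_0\approx \nabla_h^0 v_h=v_h$ only in $H^0$, which is not enough; instead I compare $\nabla_h v_h$ with $\tau_1$: $|\tau_0|_{1,h}\le|\nabla_h v_h-\nabla_h\tau_0|_{0,h}$? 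That is not directly what Lemma~\ref{lem:connectionerror} gives either. The correct chaining is: for each $r$, control $|v_h|_{r,h}=|\nabla_h^r v_h|_{0,h}\le |\nabla_h^r v_h-\tau_r|_{0,h}+\|\tau_r\|_{0}$, and then bound $\|\tau_r\|_0$ by the continuous Poincar\'e inequality applied to each component of $\tau_r\in H_0^1$, namely $\|\tau_r\|_0\lesssim|\tau_r|_{1,h}$; in turn $|\tau_r|_{1,h}\le|\tau_r-\nabla_h^{r}v_h|_{1,h}+|\nabla_h^{r}v_h|_{1,h}=|\tau_r-\nabla_h^r v_h|_{1,h}+|v_h|_{r+1,h}\lesssim |v_h|_{m,h}+|v_h|_{r+1,h}$. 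So a downward induction on $r$ from $r=m-1$ (where $|v_h|_{m,h}$ is the given quantity) down to $r=0$ yields $|v_h|_{r,h}\lesssim |v_h|_{m,h}$ for all $r=0,\dots,m-1$, and summing gives $\|v_h\|_{m,h}\lesssim|v_h|_{m,h}$.

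Concretely, the steps in order are: (i) for $r=m-1$, write $|v_h|_{m-1,h}\le\|\nabla_h^{m-1}v_h-\tau_{m-1}\|_{0,h}+\|\tau_{m-1}\|_0\lesssim h\,|v_h|_{m,h}+|\tau_{m-1}|_{1,h}$ by Lemma~\ref{lem:connectionerror} (with $j=0$) and the continuous Poincar\'e inequality on $H_0^1(\Omega;\mathbb T_n(m-1))$; then $|\tau_{m-1}|_{1,h}\le|\tau_{m-1}-\nabla_h^{m-1}v_h|_{1,h}+|v_h|_{m,h}\lesssim|v_h|_{m,h}$ again by Lemma~\ref{lem:connectionerror} (with $j=1$); hence $|v_h|_{m-1,h}\lesssim|v_h|_{m,h}$. (ii) Inductively, assuming $|v_h|_{r+1,h}\lesssim|v_h|_{m,h}$, repeat the same two-term estimate with $\tau_r$ to get $|v_h|_{r,h}\lesssim h^{m-r}|v_h|_{m,h}+|\tau_r|_{1,h}$ and $|\tau_r|_{1,h}\lesssim|v_h|_{m,h}+|v_h|_{r+1,h}\lesssim|v_h|_{m,h}$. (iii) Sum over $r=0,\dots,m$ to conclude $\|v_h\|_{m,h}^2=\sum_{r=0}^m|v_h|_{r,h}^2\lesssim|v_h|_{m,h}^2$.

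The main obstacle is purely bookkeeping: making sure the Poincar\'e inequality is applied to the right conforming tensor field $\tau_r$ and that the approximation estimate from Lemma~\ref{lem:connectionerror} is invoked at both $j=0$ and $j=1$ so that the induction closes — the homogeneous boundary conditions in $V_h$ (the dofs of $\partial^{|\alpha|}v_h/\partial\nu_F^\alpha$ vanish on $\partial\Omega$) are precisely what guarantees $\tau_r\in H_0^1$ via the construction in the proof of Lemma~\ref{lem:connectionerror}, so I should make the role of that boundary condition explicit. No genuinely hard estimate is needed beyond what is already proved.
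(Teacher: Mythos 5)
Your proposal is correct and coincides with the paper's own argument: the paper likewise bounds $\|\nabla_h^r v_h\|_0\le\|\nabla_h^r v_h-\tau_r\|_0+\|\tau_r\|_0$, applies the continuous Poincar\'e inequality to $\tau_r\in H_0^1(\Omega;\mathbb T_n(r))$, uses Lemma~\ref{lem:connectionerror} at both $j=0$ and $j=1$ to get $\|\nabla_h^r v_h\|_0\lesssim|v_h|_{m,h}+\|\nabla_h^{r+1}v_h\|_0$, and closes by the downward recursion in $r$. The only difference is expository: you spell out the induction and the role of the boundary degrees of freedom in placing $\tau_r$ in $H_0^1$, which the paper leaves implicit.
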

\begin{proof}
By picking $\tau_r\in H_0^1(\Omega; \mathbb T_{n}(r))$ as in Lemma~\ref{lem:connectionerror}, due to~\eqref{eq:connectionerror}
% the fact that $\tau_r\in H_0^1(\Omega; \mathbb T_{n}(r))$,
and the Poincar\'e inequality,
we have for $r=0,1,\cdots, m-1$,
\begin{align*}
\|\nabla_h^rv_h\|_0&\leq \|\nabla_h^rv_h-\tau_r\|_0+\|\tau_r\|_0\lesssim |v_h|_{m,h}+|\tau_r|_1 \\
&\leq |v_h|_{m,h}+|\nabla_h^rv_h-\tau_r|_{1,h}+|\nabla_h^rv_h|_{1,h} \\
&\lesssim |v_h|_{m,h}+\|\nabla_h^{r+1}v_h\|_0,
\end{align*}
which leads to~\eqref{eq:poincareinequality}.
\end{proof}

The discrete Poincar\'e inequality~\eqref{eq:poincareinequality} means
\[
\|v_h\|_{m,h}\eqsim |v_h|_{m,h}\quad \forall~v_h\in V_h,
\]
i.e. $|\cdot|_{m,h}$ is a norm on the space $V_h$.

\subsection{Norm equivalence and well-posedness of the discretization}

Denote by $\ker(\Pi^K)\subset W_k(K)$ the kernel space of the operator $\Pi^K$.
By~\eqref{eq:projectPk} and Lemma~\ref{eq:dofsunisolventWK}, both $|\cdot|_{m, K}$ and $S_K^{1/2}(\cdot, \cdot)$ are norms on the finite dimensional space $\ker(\Pi^K)$. %\mnote{ verify they define norms}. %Then we get from the scaling argument (cf.~\cite{ChenHuang2018})
Then we have the following norm equivalence. %\mnote{ formulate as a lemma or theorem?}
%\begin{equation}\label{eq:L2equiv}
%\|v\|_{0, K}^2\eqsim h_K^{n}\sum_{i=1}^{N_K}\chi_i^2(v)\quad\forall~v\in V_k(K),
%\end{equation}
\begin{theorem}\label{th:normequivalence}
Assume the mesh $\mathcal T_h$ satisfies conditions (A1) and (A2). For any $K\in\mathcal T_h$, the following norm equivalence holds
\begin{equation}\label{eq:SKequiv}
S_K(v, v)\eqsim |v|_{m, K}^2\quad\forall~v\in \ker(\Pi^K),
\end{equation}
where the constant is independent of $h_K$, but may depend on the chunkiness parameter $\rho_K$, the degree of polynomials $k$, the order of differentiation $m$, the dimension of space $n$, and the shape regularity and quasi-uniform constants of the virtual triangulation $\mathcal T^*_h$.
\end{theorem}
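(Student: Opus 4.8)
The plan is to prove the two bounds in \eqref{eq:SKequiv} separately, reducing each to an $h_K$-scaled inequality on the finite-dimensional space $\ker(\Pi^K)$; throughout, conditions (A1)--(A2) are invoked to ensure $\#\mathcal F^j(K)\lesssim1$, $|F|\eqsim h_K^{n-j}$ for $F\in\mathcal F^j(K)$, and uniform constants in the Bramble--Hilbert estimate \eqref{eq:Bramble-Hilbert}, the trace inequality \eqref{L2trace}, the polynomial inverse inequality, the standard bubble-function norm equivalences, and the equivalence between the $L^2(F)$ norm of a polynomial and its coefficient vector in the scaled monomial basis. The first ingredient I would establish is a local Poincar\'e inequality $\|v\|_{j,K}\lesssim h_K^{m-j}|v|_{m,K}$ for $0\le j\le m$ and all $v\in\ker(\Pi^K)$. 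Picking $q_v\in\mathbb P_{m-1}(K)$ from \eqref{eq:Bramble-Hilbert} reduces this to $\|q_v\|_{0,K}\lesssim h_K^m|v|_{m,K}$. Since $\Pi^Kv=0$, the constraints \eqref{eq:H2projlocal2} give $\sum_{F\in\mathcal F^r(K)}Q_0^F(\nabla^{m-r}v)=0$ for $r=1,\dots,m$; writing $v=q_v+(v-q_v)$, estimating $Q_0^F(\nabla^{m-r}(v-q_v))$ by $|F|^{-1/2}\|\nabla^{m-r}(v-q_v)\|_{0,F}$, and applying the trace inequality and \eqref{eq:Bramble-Hilbert}, one gets $\big|\sum_{F\in\mathcal F^r(K)}Q_0^F(\nabla^{m-r}q_v)\big|\lesssim h_K^{r-1/2-(n-r)/2}|v|_{m,K}$. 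The linear map $q\mapsto\big(\sum_{F\in\mathcal F^r(K)}Q_0^F(\nabla^{m-r}q)\big)_{r=1}^m$ on $\mathbb P_{m-1}(K)$ is injective (this is exactly the well-posedness of $\Pi^K$) between spaces of equal fixed dimension and, rescaled to the reference configuration, has an injectivity constant uniform in $K$ by (A1)--(A2); this converts the last bound into $\|q_v\|_{0,K}\lesssim h_K^m|v|_{m,K}$.

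For the bound $S_K(v,v)\lesssim|v|_{m,K}^2$ I would estimate each squared degree of freedom in \eqref{eq:stabilization}. For a volume d.o.f.\ from \eqref{Hmdof1}, Cauchy--Schwarz and $\|q\|_{L^\infty(K)}\lesssim1$ give $h_K^{n-2m}|\chi_i(v)|^2\lesssim h_K^{-2m}\|v\|_{0,K}^2$. For a face d.o.f.\ from \eqref{Hmdof2}, the same reasoning yields $|\chi_i(v)|\lesssim h_K^{|\alpha|-(n-j)/2}\|\nabla^{|\alpha|}v\|_{0,F}$, and iterating the trace inequality \eqref{L2trace} from $F$ down into $K$ bounds $h_K^{n-2m}|\chi_i(v)|^2$ by $\sum_{s=|\alpha|}^{m}h_K^{2s-2m}|v|_{s,K}^2$. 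Summing over the boundedly many degrees of freedom, $S_K(v,v)\lesssim\sum_{s=0}^{m}h_K^{2s-2m}|v|_{s,K}^2$, which the local Poincar\'e inequality collapses to $\lesssim|v|_{m,K}^2$.

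For the reverse bound $|v|_{m,K}^2\lesssim S_K(v,v)$ I would start from the generalized Green's identity \eqref{eq:HmGreen} with $u=v$, expressing $|v|_{m,K}^2$ as $((-\Delta)^mv,v)_K$ plus a sum of face pairings $(D^{2m-j-|\alpha|}_{F,\alpha}v,\partial^{|\alpha|}v/\partial\nu_F^\alpha)_F$. By the definitions of $V_k(K)$ and $W_k(K)$, in each term the first argument lies in precisely the polynomial space against which the corresponding degrees of freedom in \eqref{Hmdof1}--\eqref{Hmdof2} pair the second argument (terms whose polynomial space is trivial simply drop out; for $v\in W_k(K)$ with $2m\le k<3m-1$ one first replaces $(-\Delta)^mv$ by $Q_{k-2m}^K(-\Delta)^mv$, legitimate because $\Pi^Kv=0$ and $v\in W_k(K)$). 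Expanding that polynomial in scaled monomials and using Cauchy--Schwarz bounds the term by a weighted $L^2$ norm of the polynomial times $S_K(v,v)^{1/2}$, the weight exactly cancelling the d.o.f.\ scaling, namely $|((-\Delta)^mv,v)_K|\lesssim h_K^m\|(-\Delta)^mv\|_{0,K}\,S_K(v,v)^{1/2}$ and $|(D^{2m-j-|\alpha|}_{F,\alpha}v,\partial^{|\alpha|}v/\partial\nu_F^\alpha)_F|\lesssim h_K^{m-j/2-|\alpha|}\|D^{2m-j-|\alpha|}_{F,\alpha}v\|_{0,F}\,S_K(v,v)^{1/2}$. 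It then remains to prove $\|(-\Delta)^mv\|_{0,K}\lesssim h_K^{-m}|v|_{m,K}$ and $\|D^{2m-j-|\alpha|}_{F,\alpha}v\|_{0,F}\lesssim h_K^{|\alpha|+j/2-m}|v|_{m,K}$; once these hold, summing over the boundedly many terms gives $|v|_{m,K}^2\lesssim S_K(v,v)^{1/2}|v|_{m,K}$ and hence the claim. For the volume estimate I would take an $H_0^m(K)$ bubble $b_K$ positive on the interior ball: then $b_K(-\Delta)^mv\in H_0^m(K)$ annihilates all the boundary terms in \eqref{eq:HmGreen}, so $((-\Delta)^mv,b_K(-\Delta)^mv)_K=(\nabla^mv,\nabla^m(b_K(-\Delta)^mv))_K$, and combining the bubble norm equivalence $(p,b_Kp)_K\eqsim\|p\|_{0,K}^2$ on polynomials with the inverse inequality $|b_Kp|_{m,K}\lesssim h_K^{-m}\|p\|_{0,K}$ and Cauchy--Schwarz yields the bound after dividing by $\|(-\Delta)^mv\|_{0,K}$.

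The main obstacle is the family of face estimates $\|D^{2m-j-|\alpha|}_{F,\alpha}v\|_{0,F}\lesssim h_K^{|\alpha|+j/2-m}|v|_{m,K}$. Mimicking the volume argument requires, for each $F\in\mathcal F^j(K)$ and each admissible $\alpha$, a function supported in a neighbourhood of $F$ whose $\alpha$-th normal derivative on $F$ equals $D^{2m-j-|\alpha|}_{F,\alpha}v$ multiplied by a face bubble, whose other normal derivatives up to order $m-1$ vanish on $F$, and which together with its normal derivatives up to order $m-1$ vanishes on every other face, so that testing \eqref{eq:HmGreen} against it isolates the single pairing $(D^{2m-j-|\alpha|}_{F,\alpha}v,\cdot)_F$. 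Constructing such functions with the correct $h_K$-scaling on an arbitrary polytope, and controlling any residual lower-codimension contributions by bootstrapping on the codimension $j$ with the help of Lemma~\ref{lem:bimgunisolvence}, the face Green's identity \eqref{eq:HmfaceGreen}, and the trace inequality, is where the bulk of the technical work lies; it is the $H^m$ counterpart of the bubble-function ``efficiency'' arguments familiar from a posteriori error estimation.
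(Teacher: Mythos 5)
Your proposal follows essentially the same architecture as the paper's Appendix~\ref{sec:appendixA}: the direction $S_K(v,v)\lesssim|v|_{m,K}^2$ via termwise estimation of the degrees of freedom plus a local Poincar\'e inequality on $\ker(\Pi^K)$ (the paper's \eqref{eq:poincare}--\eqref{eq:SKequiv2}), and the direction $|v|_{m,K}^2\lesssim S_K(v,v)$ via the Green's identity \eqref{eq:HmGreen}, the coefficient/norm equivalence \eqref{eq:polynorm} to absorb $S_K^{1/2}(v,v)$, the $Q_{k-2m}^K$ device for $W_k(K)$ when $2m\leq k<3m-1$ (the paper's \eqref{eq:20190605-1}), the volume bubble estimate \eqref{eq:inverseeq1}, and face inverse estimates for $D^{2m-j-|\alpha|}_{F,\alpha}(v)$. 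But those face estimates --- the paper's \eqref{eq:inverseeq2}, proved in its longest lemma --- are precisely where you stop: you name them ``the main obstacle'' and only describe what a suitable test function should do. That is the bulk of the proof, so as it stands the hard direction is a program rather than a proof. Moreover, the test function you ask for (the $\alpha$-th normal derivative on $F$ equal to a bubble times $D^{2m-j-|\alpha|}_{F,\alpha}(v)$, \emph{all other} normal derivatives up to order $m-1$ vanishing on $F$, and everything vanishing on every other face) is stronger than what the paper's construction achieves and stronger than what is needed: the paper's $\phi_F$, built from the normal-constant extension of the trace, the bubble $b_{F'}^{2m}$ and the factor $\prod_i\lambda_{F',i}^{\alpha_i}$, only kills the pairings with $|\beta|<|\alpha|$ and with $|\beta|=|\alpha|$, $\beta\neq\alpha$; the surviving same-codimension terms with $|\beta|>|\alpha|$ and all lower-codimension terms are kept on the right-hand side of \eqref{eq:inverseeq2} and removed by a double recursion (upward in the codimension $j$, downward in $|\alpha|$), and the estimate must be proved simultaneously for all coplanar faces $F'\in\mathcal F_F^j(K)$, since no bubble can separate faces lying in the same hyperplane. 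Carrying out this construction with the correct scaling is the content you have omitted.

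There is a second, more localized gap in your Poincar\'e step. You reduce $\|q_v\|_{0,K}\lesssim h_K^m|v|_{m,K}$ to the injectivity of the map $q\mapsto\big(\sum_{F\in\mathcal F^r(K)}Q_0^F(\nabla^{m-r}q)\big)_{r=1}^m$ on $\mathbb P_{m-1}(K)$ and assert that its injectivity constant, ``rescaled to the reference configuration,'' is uniform by (A1)--(A2). For general polytopes this does not come for free: there is no affine-equivalent reference element (the paper stresses exactly this point after the theorem statement), the rescaled domains $(K-\boldsymbol x_K)/h_K$ form an infinite family, and star-shapedness plus (A2) do not by themselves give a quantitative lower bound on the smallest singular value of this geometry-dependent map without an additional compactness or direct argument. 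The paper sidesteps the issue: its Poincar\'e inequality \eqref{eq:poincare} is obtained by subtracting $Q_0^K$- and $Q_0^e$-averages and telescoping with the trace inequality \eqref{L2trace}, using the constraint \eqref{eq:H2projlocal2} only through the identity $\sum_{e\in\mathcal F^{m-s}(K)}Q_0^e(\nabla^s v)=0$ for $v\in\ker(\Pi^K)$, so no uniform invertibility is ever invoked. Either adopt that recursive argument or supply an actual proof of your uniformity claim; as written this step is also unjustified, though more easily repairable than the missing face estimates.
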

Using the generalized scaling argument, i.e.,
applying an affine map $\hat {\boldsymbol  x} = (\boldsymbol  x- \boldsymbol
x_K)/h_{K}$, it is easy to show the norm equivalence constant is independent of the diameter of $K$.

The constant in \eqref{eq:SKequiv}, however, could still depend on the geometry of $K$ and a clear dependence is not easy to characterize. For finite element space defined on simplexes, the shape functions are usually polynomials and there exists an affine map to the reference element $\hat K$. The norm equivalence on the reference element can be used. Since the Jacobi matrix is constant, the norm $H^m(K)$ and $H^m(\hat K)$ can be clearly characterized using the geometry of the simplex, e.g., the angles of a triangle in 2D.

Now for a general polytope $K$,  there does not exist an affine-equivalent reference polytope $\hat K$. For star-shaped and Lipschitz continuous domain, one can use the isomorphism $\Phi: K\to B_{K}$ but $\Phi \in W^{1,\infty}(B_K)$ only. One can apply the norm equivalence on $B_K$ but how the norm $H^m(K)$ related to $H^m(B_K)$, for $m>1$, is not clear.

We shall prove the norm equivalence \eqref{eq:SKequiv} with mesh conditions (A1)-(A2) in Appendix \ref{sec:appendixA}.
%The constant in \eqref{eq:SKequiv} depends only on $\rho_K$ and the shape regularity and quasi-uniform constants of the virtual triangulation $\mathcal T^*_h$. \mnote{ also depends on $k,m,n$.}

%It is well-known that the different norms of the finite-dimensional space defined on $K$ are equivalent, whereas the norm equivalence constant depends on $K$. It is difficult to show that the norm equivalence constant is independent of the diameter of $K$ by using the scaling argument, since the polytope $K$ can be in any shape and then there does not exist an affine-equivalent reference polytope $\hat K$.
% via the careful scaling of each d.o.f. It is possible to follow the approach in~\cite{ChenHuang2018} to prove that the constant depends only on the shape regularity of the virtual triangulation $\mathcal T_h^*$.

By the Cauchy-Schwarz inequality and the norm equivalence~\eqref{eq:SKequiv}, we have
\begin{equation}\label{eq:SKbound}
S_K(w, v)\lesssim  |w|_{m, K}|v|_{m, K}\quad\forall~w, v\in \ker(\Pi^K).
\end{equation}
%By~\eqref{eq:Hmprojbound} and~\eqref{eq:SKbound}, we have
%\begin{equation*}%\label{eq:ahKbound}
%a_{h,K}(w, v)\lesssim  |w|_{m, K}|v|_{m, K}\quad\forall~w, v\in W_k(K),
%\end{equation*}
which implies the continuity of $a_h(\cdot,\cdot)$
\begin{equation}\label{eq:ahbound}
a_h(w_h, v_h)\lesssim  |w_h|_{m, h}|v_h|_{m, h}\quad\forall~w_h, v_h\in V_h+\mathbb P_k(\mathcal T_h).
\end{equation}
Next we verify the coercivity of $a_h(\cdot,\cdot)$.
\begin{lemma}
For any $v_h\in V_h+\mathbb P_k(\mathcal T_h)$, it holds
\begin{equation}\label{eq:ahcoercivity}
|v_h|_{m,h}^2\lesssim a_h(v_h, v_h).
\end{equation}
\end{lemma}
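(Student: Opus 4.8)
The plan is to establish the coercivity \eqref{eq:ahcoercivity} by splitting each local contribution $a_{h,K}(v_h,v_h)$ into the consistent part $|\nabla^m\Pi^Kv_h|_{0,K}^2$ and the stabilization part $S_K(v_h-\Pi^Kv_h,v_h-\Pi^Kv_h)$, and then recombining a weighted average of the two into $|v_h|_{m,K}^2$. The key observation is that $v_h - \Pi^Kv_h \in \ker(\Pi^K)$ since $\Pi^K$ is a projector by Lemma~\ref{lm:Pi}, so the norm equivalence \eqref{eq:SKequiv} applies to it. First I would write, for any $v_h\in V_h+\mathbb P_k(\mathcal T_h)$ and any $K\in\mathcal T_h$,
\[
|v_h|_{m,K}^2 = |\Pi^Kv_h + (v_h-\Pi^Kv_h)|_{m,K}^2 \leq 2|\Pi^Kv_h|_{m,K}^2 + 2|v_h-\Pi^Kv_h|_{m,K}^2.
\]
For the first term, $|\Pi^Kv_h|_{m,K}^2 = (\nabla^m\Pi^Kv_h,\nabla^m\Pi^Kv_h)_K$, which is exactly the consistent part of $a_{h,K}(v_h,v_h)$. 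For the second term, I apply the lower bound in the norm equivalence \eqref{eq:SKequiv} to $v_h-\Pi^Kv_h\in\ker(\Pi^K)$, obtaining $|v_h-\Pi^Kv_h|_{m,K}^2\lesssim S_K(v_h-\Pi^Kv_h,v_h-\Pi^Kv_h)$, which is the stabilization part of $a_{h,K}(v_h,v_h)$.

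Combining these two estimates gives
\[
|v_h|_{m,K}^2 \lesssim (\nabla^m\Pi^Kv_h,\nabla^m\Pi^Kv_h)_K + S_K(v_h-\Pi^Kv_h,v_h-\Pi^Kv_h) = a_{h,K}(v_h,v_h).
\]
Summing over all $K\in\mathcal T_h$ then yields $|v_h|_{m,h}^2 = \sum_{K\in\mathcal T_h}|v_h|_{m,K}^2 \lesssim \sum_{K\in\mathcal T_h}a_{h,K}(v_h,v_h) = a_h(v_h,v_h)$, which is precisely \eqref{eq:ahcoercivity}. One should note that the implied constant here depends only on the quantities allowed in \eqref{eq:SKequiv}, hence is independent of the mesh size $h$.

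I expect no serious obstacle in this argument: all the heavy lifting has already been done in Theorem~\ref{th:normequivalence}, whose proof is deferred to the appendix. The only point requiring a moment's care is that $v_h-\Pi^Kv_h$ genuinely lies in $\ker(\Pi^K)$; this follows because $\Pi^K(v_h-\Pi^Kv_h) = \Pi^Kv_h - \Pi^K\Pi^Kv_h = \Pi^Kv_h - \Pi^Kv_h = 0$ by \eqref{eq:projectPk}, using that $\Pi^Kv_h\in\mathbb P_k(K)$. For the case $v_h\in\mathbb P_k(\mathcal T_h)$ (not just $V_h$) the argument is identical, noting $W_k(K)\supseteq\mathbb P_k(K)$ and the projection and stabilization are both defined on $V_k(K)\supseteq\mathbb P_k(K)$, so the splitting still makes sense locally. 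Together with the continuity bound \eqref{eq:ahbound} already established, this completes the well-posedness of the discrete problem \eqref{polyharmonicNoncfmVEM} by the Lax--Milgram lemma.
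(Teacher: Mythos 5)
Your proposal is correct and follows essentially the same route as the paper: split $v_h$ locally into $\Pi^K v_h$ plus the remainder in $\ker(\Pi^K)$, bound the remainder by the stabilization via the norm equivalence \eqref{eq:SKequiv}, and sum over elements. The only cosmetic difference is that the paper exploits the $H^m$-orthogonality of $\Pi^K$ to get the exact Pythagorean identity where you use the triangle inequality with a factor $2$, which is immaterial for the $\lesssim$ bound.
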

\begin{proof}
Since $\Pi^K$ is the $H^m$-orthogonal projection,
\[
|v_h|_{m, K}^2 = \left|\Pi^K(v_h|_K)\right|_{m, K}^2 + \left|v_h-\Pi^K(v_h|_K)\right|_{m, K}^2.
\]
Applying~\eqref{eq:SKequiv}, we have
\begin{align}
|v_h|_{m, K}^2 &\lesssim  \left|\Pi^K(v_h|_K)\right|_{m, K}^2 + S_K(v_h-\Pi^K(v_h|_K), v_h-\Pi^K(v_h|_K))\notag\\
&=a_{h,K}(v_h, v_h), \label{eq:20181012-3}
\end{align}
which implies~\eqref{eq:ahcoercivity}.
\end{proof}

Therefore the nonconforming virtual element method~\eqref{polyharmonicNoncfmVEM} is uniquely solvable by the Lax-Milgram lemma.

\section{Error Analysis}\label{sec:erroranalysis}
In this section, we will develop the error analysis of the nonconforming virtual element method~\eqref{polyharmonicNoncfmVEM} for $H^m$-problem.

\subsection{Interpolation error estimate}
We first explore a discrete Galerkin orthogonality of $u-I_hu$ for the nonconforming element, where $I_h$ defined on $H^m(\Omega)$ is the global canonical interpolation operator based on the degrees of freedom in~\eqref{Hmdof1}-\eqref{Hmdof2}, i.e., $(I_hv)|_K:=I_K(v|_K)$ for any $v\in H^m(\Omega)$ and $K\in\mathcal T_h$. A similar result was given in~\cite[(3.3)]{HuShiXu2012}.
\begin{lemma}\label{lem:20190605}
For each $K\in\mathcal T_h$, any $v\in H^m(\Omega)$ and $w \in H^m(K)$, it holds
\begin{equation}\label{eq:20181012-2}
a_{h,K}(v-I_hv, w)=0.
\end{equation}
\end{lemma}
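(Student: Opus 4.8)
The plan is to unwind the definition of the local bilinear form and show that each of its two pieces separately annihilates $v - I_hv$ against any $w \in H^m(K)$. Recall that
\[
a_{h,K}(v - I_hv, w) = (\nabla^m \Pi^K(v - I_hv), \nabla^m \Pi^K w)_K + S_K\big((v - I_hv) - \Pi^K(v - I_hv),\, w - \Pi^K w\big).
\]
First I would handle the consistency term. By the corollary to Lemma~\ref{lm:Pi}, namely \eqref{eq:20181012-1}, we have $\Pi^K(v) = \Pi^K(I_K v) = \Pi^K(I_h v)$ on $K$. Since $\Pi^K$ is linear, $\Pi^K(v - I_hv) = 0$, so the first term vanishes identically, and moreover the argument $(v - I_hv) - \Pi^K(v - I_hv)$ of $S_K$ reduces to $v - I_hv$.

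Next I would handle the stabilization term $S_K(v - I_hv,\, w - \Pi^K w)$. Here the key point is that $I_h$ is the canonical interpolation based precisely on the degrees of freedom $\mathcal N_k(K)$, so $\chi_i(v - I_hv) = \chi_i(v) - \chi_i(I_K v) = 0$ for every $i = 1, \dots, N_K$. By the explicit formula \eqref{eq:stabilization} for $S_K$ as an $\ell^2$-type sum $h_K^{n-2m}\sum_i \chi_i(\cdot)\chi_i(\cdot)$ over the local degrees of freedom, it follows that $S_K(v - I_hv, \cdot) = 0$ as a functional on $W_k(K)$, hence in particular $S_K(v - I_hv,\, w - \Pi^K w) = 0$. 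Adding the two vanishing contributions gives \eqref{eq:20181012-2}.

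I do not anticipate a serious obstacle: the statement is essentially a bookkeeping consequence of two facts established earlier — that $\Pi^K$ factors through $I_K$ on $H^m(K)$, and that $I_h$ matches all local degrees of freedom by construction. The only mild subtlety worth a sentence is that $w \in H^m(K)$ need not lie in $W_k(K)$, so one should note that $\Pi^K$ and hence $a_{h,K}(\cdot, w)$ still make sense for such $w$ because $\Pi^K \colon H^m(K) \to \mathbb{P}_k(K)$ is defined on all of $H^m(K)$, and $S_K(v - I_hv, w - \Pi^K w)$ vanishes from the left slot alone regardless of whether the right slot is a virtual function or merely in $H^m(K)$ — indeed one may simply observe $S_K$ is used here only through its left argument. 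Care should be taken to phrase the stabilization argument so that it does not implicitly require $w - \Pi^K w \in \ker(\Pi^K)$, since that is not needed: vanishing of all $\chi_i$ on the first argument is enough.
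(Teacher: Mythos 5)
Your proposal is correct and follows essentially the same route as the paper: use \eqref{eq:20181012-1} to conclude $\Pi^K(v-I_hv)=0$, which kills the projection term, and then observe that $v$ and $I_hv$ share all local degrees of freedom so that the stabilization \eqref{eq:stabilization} vanishes through its first argument. Your extra remark that the argument does not require $w-\Pi^Kw\in\ker(\Pi^K)$ is a fine (if implicit in the paper) clarification, not a deviation.
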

\begin{proof}
It follows from \eqref{eq:20181012-1} and the definition of $S_K(\cdot, \cdot)$
\begin{align*}
a_{h,K}(v-I_hv, w)&=(\nabla^m\Pi^K(v-I_hv), \nabla^m\Pi^Kw)_K \\
&\quad+S_K(v-I_hv-\Pi^K(v-I_hv), w-\Pi^Kw) \\
&=S_K(v-I_hv, w-\Pi^Kw)=0,
\end{align*}
in the last step we use the fact that $v$ and $I_hv$ share the same degrees of freedom, and thus the stabilization $S_K(v-I_hv, w-\Pi^Kw)$ using d.o.f. vanishes, cf. \eqref{eq:stabilization}.
\end{proof}

\begin{remark}\rm
Lemma~\ref{lem:20190605} holds true in virtue of the choice \eqref{eq:stabilization}; indeed, any stabilization
equivalent to \eqref{eq:stabilization} which annihilates if all the degrees of freedom are zero would be fine.
%for instance the lemma is false employing a stabilization based on boundary degrees of freedom only
\end{remark}

%By techniques in~\cite{ChenHuang2018,MoraRiveraRodriguez2015,BrennerScott2008}, the following interpolation error estimates can be established.
With the help of the discrete Galerkin orthogonality, we present the following interpolation error estimate.
\begin{lemma}
For each $K\in\mathcal T_h$ and any $v\in H^{k+1}(K)$, we have
\begin{equation}\label{eq:Iherror}
|v-I_hv|_{m,K}\lesssim h_K^{k+1-m}|v|_{k+1, K}.
\end{equation}
\end{lemma}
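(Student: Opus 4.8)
The plan is to combine the standard Bramble--Hilbert approximation estimate with the discrete Galerkin orthogonality from Lemma~\ref{lem:20190605} and the $H^m$-stability of the local projection. The key point is that $I_K$ is only characterized abstractly through the degrees of freedom, so we cannot estimate $v - I_Kv$ directly; instead we pass through a polynomial. First I would take $p := T_k^K v \in \mathbb P_k(K)$, the Bramble--Hilbert polynomial from~\eqref{eq:Bramble-Hilbert}, so that $|v-p|_{j,K}\lesssim h_K^{k+1-j}|v|_{k+1,K}$ for $0\le j\le k+1$, in particular for $j=m$. Since $p\in\mathbb P_k(K)\subseteq W_k(K)$, we have $I_Kp = p$.

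Next I would write $v - I_Kv = (v-p) + (p - I_Kv) = (v-p) - I_K(v-p)$, so it suffices to bound $|I_K(v-p)|_{m,K}$. Here I invoke the discrete Galerkin orthogonality: apply Lemma~\ref{lem:20190605} with $w = I_K(v-p)\in W_k(K)\subset H^m(K)$ to get $a_{h,K}\big((v-p) - I_K(v-p),\, I_K(v-p)\big)=0$, hence
\begin{equation*}
a_{h,K}\big(I_K(v-p),\, I_K(v-p)\big) = a_{h,K}\big(v-p,\, I_K(v-p)\big).
\end{equation*}
By the coercivity~\eqref{eq:ahcoercivity} and continuity~\eqref{eq:ahbound} of $a_{h,K}$ on $W_k(K)$ (the element-local versions of these, which follow from~\eqref{eq:20181012-3} and~\eqref{eq:SKbound}),
\begin{equation*}
|I_K(v-p)|_{m,K}^2 \lesssim a_{h,K}\big(I_K(v-p), I_K(v-p)\big) = a_{h,K}\big(v-p, I_K(v-p)\big) \lesssim |v-p|_{m,K}\,|I_K(v-p)|_{m,K},
\end{equation*}
so $|I_K(v-p)|_{m,K}\lesssim |v-p|_{m,K}\lesssim h_K^{k+1-m}|v|_{k+1,K}$. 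Combining with the triangle inequality $|v-I_Kv|_{m,K}\le |v-p|_{m,K} + |I_K(v-p)|_{m,K}$ yields~\eqref{eq:Iherror}.

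The main obstacle is justifying the continuity and coercivity of $a_{h,K}$ on all of $W_k(K)$ (not just on $\ker(\Pi^K)$): one needs the orthogonal splitting $|w|_{m,K}^2 = |\Pi^Kw|_{m,K}^2 + |w-\Pi^Kw|_{m,K}^2$ together with the norm equivalence~\eqref{eq:SKequiv} applied to $w-\Pi^Kw\in\ker(\Pi^K)$, exactly as in~\eqref{eq:20181012-3}; the continuity bound $a_{h,K}(w,w')\lesssim |w|_{m,K}|w'|_{m,K}$ follows similarly from $|\Pi^Kw|_{m,K}\le |w|_{m,K}$ and~\eqref{eq:SKbound}. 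A secondary technical point is that $w = I_K(v-p)$ must lie in $H^m(K)$ so that Lemma~\ref{lem:20190605} applies with this choice, which holds since $W_k(K)\subset H^m(K)$ by construction. Everything else is a routine application of Bramble--Hilbert and the triangle inequality.
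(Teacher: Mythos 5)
Your proposal is correct and follows essentially the same route as the paper: with $p=T_k^Kv$ one has $I_K(v-p)=I_Kv-T_hv|_K$, so your chain of coercivity~\eqref{eq:20181012-3}, the discrete Galerkin orthogonality~\eqref{eq:20181012-2}, continuity of $a_{h,K}$, and the Bramble--Hilbert bound~\eqref{eq:Bramble-Hilbert} reproduces, up to a sign, the paper's estimate of $|T_hv-I_hv|_{m,K}$. The one subtlety you flag---continuity of $a_{h,K}$ when the first argument $v-p$ is a general $H^m(K)$ function rather than an element of $W_k(K)$---is present in the paper's own proof as well (there with $T_hv-v$) and is handled the same way, via $|\Pi^K w|_{m,K}\le|w|_{m,K}$ and the bound on the stabilization term.
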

\begin{proof}
Applying \eqref{eq:20181012-3} and \eqref{eq:20181012-2} with $w=(T_hv-I_hv)|_K$, we have
\begin{align*}
|T_hv-I_hv|_{m,K}^2&\lesssim a_{h,K}(T_hv-I_hv, T_hv-I_hv)=a_{h,K}(T_hv-v, T_hv-I_hv)\\
&\lesssim |v-T_hv|_{m,K}|T_hv-I_hv|_{m,K},
\end{align*}
which indicates
\[
|T_hv-I_hv|_{m,K}\lesssim |v-T_hv|_{m,K}.
\]
Hence
\[
|v-I_hv|_{m,K}\leq |v-T_hv|_{m,K}+|T_hv-I_hv|_{m,K}\lesssim |v-T_hv|_{m,K}.
\]
Therefore \eqref{eq:Iherror} follows from \eqref{eq:Bramble-Hilbert}.
\end{proof}

\subsection{Consistency error estimate}\label{sec:consistency}
Due to~\eqref{eq:projectPk} and~\eqref{eq:H2projlocal1}, we have the following $k$-consistency.
\begin{lemma}
For any $p\in \mathbb P_k(K)$ and any $v\in W_k^K$, it holds
\begin{equation}\label{eq:kconsistency}
a_{h,K}(p, v)=(\nabla^mp, \nabla^mv)_K.
\end{equation}
\end{lemma}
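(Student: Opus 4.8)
The plan is to unwind the definition of the local bilinear form $a_{h,K}(p,v)$ and exploit that $\Pi^K$ fixes polynomials of degree $k$. Recall
\[
a_{h,K}(p,v)=(\nabla^m\Pi^Kp,\nabla^m\Pi^Kv)_K+S_K(p-\Pi^Kp,\,v-\Pi^Kv).
\]
Since $p\in\mathbb P_k(K)$, Lemma~\ref{lm:Pi} (specifically the projector property \eqref{eq:projectPk}) gives $\Pi^Kp=p$, so the stabilization argument $p-\Pi^Kp$ is zero and the second term vanishes by bilinearity of $S_K$. It remains to show that the first term equals $(\nabla^mp,\nabla^mv)_K$.

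For the first term, I would use the defining property \eqref{eq:H2projlocal1} of the local $H^m$ projection. We have $(\nabla^m\Pi^Kp,\nabla^m\Pi^Kv)_K=(\nabla^mp,\nabla^m\Pi^Kv)_K$ since $\Pi^Kp=p$. Now $\Pi^Kv\in\mathbb P_k(K)$, so it is a legitimate test function $q$ in \eqref{eq:H2projlocal1}; applying that identity with $q=\Pi^Kv$ yields $(\nabla^m\Pi^Kv,\nabla^mq')_K=(\nabla^mv,\nabla^mq')_K$ for all $q'\in\mathbb P_k(K)$, and in particular taking $q'=p\in\mathbb P_k(K)$ gives $(\nabla^mp,\nabla^m\Pi^Kv)_K=(\nabla^m\Pi^Kv,\nabla^mp)_K=(\nabla^mv,\nabla^mp)_K=(\nabla^mp,\nabla^mv)_K$. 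Chaining these equalities establishes \eqref{eq:kconsistency}.

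There is really no main obstacle here: this is a direct consequence of $\Pi^K$ being the $H^m$-orthogonal projection onto $\mathbb P_k(K)$ together with the fact that it is idempotent on $\mathbb P_k(K)$. The only point requiring a moment's care is making sure $\Pi^Kv$ is an admissible test polynomial in \eqref{eq:H2projlocal1}, which is immediate since $\Pi^K$ maps into $\mathbb P_k(K)$ by construction, and noting that $v\in W_k(K)\subset H^m(K)$ so that $\Pi^Kv$ is well-defined.
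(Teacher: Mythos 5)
Your proposal is correct and follows exactly the argument the paper intends, which cites precisely \eqref{eq:projectPk} (so $\Pi^Kp=p$ and the stabilization term vanishes) together with the defining relation \eqref{eq:H2projlocal1} applied with test polynomial $q=p$. The only cosmetic remark is that the intermediate step "applying that identity with $q=\Pi^Kv$" is unnecessary phrasing: \eqref{eq:H2projlocal1} already states $(\nabla^m\Pi^Kv,\nabla^mq)_K=(\nabla^mv,\nabla^mq)_K$ for all $q\in\mathbb P_k(K)$, and choosing $q=p$ finishes the proof directly.
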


To estimate the consistency error of the discretization, we split it into two cases, i.e. $k\geq 2m-1$ and $m\leq k< 2m-1$.
For the first case,
the weak continuity \eqref{eq:weakcontinuitygradient}, that is the projection $Q_{k-(m+i)}^F(\nabla_h^{m-(i+1)}v_h)$ is continuous across $F\in\mathcal{F}_h^1$ for $i=0,\cdots,m-1$, is sufficient to derive the optimal consistency error estimate.

\begin{lemma}\label{lem:consistencyerror1}
Let $u\in H_0^m(\Omega)\cap H^{k+1}(\Omega)$ be the solution of the polyharmonic equation~\eqref{eq:polyharmonic}. Assume $k\geq 2m-1$. Then it holds
\begin{equation}\label{eq:consistencyerror1}
(\nabla^mu, \nabla_h^mv_h)-(f, v_h)\lesssim h^{k+1-m}|u|_{k+1}|v_h|_{m,h}\quad\forall~v_h\in V_h.
\end{equation}
\end{lemma}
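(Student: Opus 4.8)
The plan is to integrate by parts elementwise to pull $(\nabla^m u, \nabla_h^m v_h)$ apart, use the strong equation $(-\Delta)^m u = f$ to cancel the volume term against $(f,v_h)$, and then bound the surface terms that remain using the weak continuity of $V_h$ together with the jump estimate \eqref{eq:weakcontinuityestimate}. Concretely, on each $K\in\mathcal T_h$ the generalized Green's identity \eqref{eq:HmGreen} gives
\[
(\nabla^m u, \nabla^m v_h)_K = ((-\Delta)^m u, v_h)_K + \sum_{j=1}^m\sum_{F\in\mathcal F^j(K)}\sum_{\alpha\in A_j,\,|\alpha|\le m-j}\Big(D^{2m-j-|\alpha|}_{F,\alpha}u, \tfrac{\partial^{|\alpha|}v_h}{\partial\nu_F^\alpha}\Big)_F .
\]
Summing over $K$ and using $(-\Delta)^m u = f$, the volume contributions cancel $(f,v_h)$ exactly, leaving only a sum of face integrals. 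For an interior $(n-1)$-dimensional face $F$, the two neighbouring elements contribute boundary terms with opposite outward normals; reorganizing, one wants to express everything as integrals over faces of codimension $1$ against jumps $\llbracket \nabla_h^s v_h\rrbracket$ for $0\le s\le m-1$, plus lower-dimensional contributions which, by the Stokes-theorem reduction discussed after Lemma~\ref{lem:20190605-1}, can likewise be folded back onto codimension-one faces. Boundary faces contribute zero because $u\in H_0^m(\Omega)$ forces $D^{2m-j-|\alpha|}_{F,\alpha}u$ to pair only with normal derivatives of $v_h$ that vanish there by the definition of $V_h$.

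Next I would exploit the weak continuity. On each $F\in\mathcal F_h^1$ the term $(D^{2m-j-|\alpha|}_{F,\alpha}u,\,\tfrac{\partial^{|\alpha|}}{\partial\nu_F^\alpha}\llbracket\nabla_h^{\bullet}v_h\rrbracket)_F$ is unchanged if we subtract from $D^{2m-j-|\alpha|}_{F,\alpha}u$ its $L^2$-projection onto the appropriate polynomial space on $F$, because the jump of the matching normal derivative of $v_h$ is $L^2$-orthogonal to that polynomial space — this is precisely \eqref{eq:weakcontinuitygradient} in the regime $k\ge 2m-1$, which guarantees that the relevant derivative order $s = m-(i+1)$ satisfies $k-(2m-1-s)\ge 0$ so a genuine mean-zero cancellation against $\mathbb P_{k-(2m-1-s)}(F)$ is available. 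Thus each face term becomes $\big(D^{2m-j-|\alpha|}_{F,\alpha}u - Q^F(\cdots),\ \llbracket\text{derivative of }v_h\rrbracket\big)_F$. Applying Cauchy–Schwarz on $F$, the approximation estimate \eqref{eq:PFerror} for the $L^2$ projection of $u$'s derivatives (which are of order $2m-j-|\alpha|\le 2m-1 \le k$, so $u\in H^{k+1}$ supplies enough regularity), and the jump bound \eqref{eq:weakcontinuityestimate}, one gets for each $F$ a bound of the form $h_K^{k+1-m}|u|_{k+1,\omega_F}|v_h|_{m,\omega_F}$ where $\omega_F$ is the patch of elements touching $F$. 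Summing over faces and using finite overlap of the patches together with the discrete Cauchy–Schwarz inequality yields \eqref{eq:consistencyerror1}.

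The main obstacle, and the step deserving the most care, is the bookkeeping in the first paragraph: correctly collecting the elementwise boundary contributions of all orders $j=1,\dots,m$ and all multi-indices $\alpha$, matching each $D^{2m-j-|\alpha|}_{F,\alpha}u$ term with the correct jump of a normal derivative of $v_h$, and verifying that the lower-codimensional pieces (faces of codimension $>1$) either telescope away by Stokes' theorem or land, after the reduction, on codimension-one faces where the weak continuity \eqref{eq:weakcontinuitygradient2} applies. One must also check the exponent arithmetic: a derivative of $u$ of order $2m-j-|\alpha|$ paired with a normal derivative of $v_h$ of order $|\alpha|$, with the $L^2(F)$ projection error scaling like $h_K^{(2m-j-|\alpha|)-(m-j-|\alpha|)+1/2}=h_K^{m+1/2}$ times $|u|_{\text{order}+1}$... actually the gain needed is $h_K^{k+1-m}$, which comes from projecting $u$'s derivative of order $\le k$ to degree $k$-minus-that-order, giving $h_K^{(k+1) - (2m-j-|\alpha|) - 1/2}$ from \eqref{eq:PFerror}, combined with $h_K^{m-s-1/2}$ from \eqref{eq:weakcontinuityestimate} with $s=|\alpha|$; since $2m-j-|\alpha| = m + (m-j) - |\alpha| \ge m + s$ is not quite the identity, one checks $(k+1-(2m-j-|\alpha|)-1/2)+(m-|\alpha|-1/2) = k+1-m + (m-j) \ge k+1-m$, so the bound is at least of the claimed order. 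Assembling these estimates and the combinatorial reduction cleanly is where the real work lies; the rest is routine.
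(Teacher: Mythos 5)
Your route is genuinely different from the paper's, and it has a real gap exactly at the step you flag as ``the main obstacle.'' The paper does \emph{not} invoke the full generalized Green's identity \eqref{eq:HmGreen} here. Instead it writes the consistency error as the telescoping sum
\[
(\nabla^mu, \nabla_h^mv_h)-(f, v_h)
=\sum_{i=0}^{m-1}(-1)^i\Big ((\div^i\nabla^mu, \nabla_h^{m-i}v_h)+(\div^{i+1}\nabla^mu, \nabla_h^{m-(i+1)}v_h)\Big ),
\]
where the last term produces $-(f,v_h)$ via $(-\Delta)^mu=f$. Each bracket is a \emph{single} integration by parts per element, so the only surface terms ever generated are codimension-one integrals $\sum_{F\in\mathcal F_h^1}((\div^i\nabla^mu)\nu_{F,1},\llbracket\nabla_h^{m-(i+1)}v_h\rrbracket)_F$; these are handled by subtracting $Q^F_{k-(m+i)}$ (legitimate by \eqref{eq:weakcontinuitygradient}, whose polynomial space is nonempty for all $i\le m-1$ precisely because $k\ge 2m-1$), then Cauchy--Schwarz with \eqref{eq:PFerror} and \eqref{eq:weakcontinuityestimate}, giving $h_K^{k-m-i+1/2}\cdot h_K^{i+1/2}=h_K^{k+1-m}$.

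By contrast, applying \eqref{eq:HmGreen} elementwise, as you propose, produces terms on faces of every codimension $j=1,\dots,m$ involving the only implicitly defined operators $D^{2m-j-|\alpha|}_{F,\alpha}$. For $j\ge 2$ a face is shared by more than two elements and the contributions do not organize into a two-sided jump; your assertion that these pieces ``either telescope away by Stokes' theorem or land, after the reduction, on codimension-one faces where \eqref{eq:weakcontinuitygradient2} applies'' is precisely the unproven step, and \eqref{eq:weakcontinuitygradient2} only gives orthogonality to constants on a lower-dimensional face, which cannot by itself produce the gain $h^{k+1-m}$ for general $k\ge 2m-1$. (It is exactly in the low-order regime $m\le k<2m-1$, treated in the \emph{next} lemma, that the paper resorts to \eqref{eq:weakcontinuitygradient2}, and there it pays a price in the achievable order and needs the Lagrange-space bridge of Lemma~\ref{lem:connectionerror}.) Your exponent bookkeeping also quotes \eqref{eq:weakcontinuityestimate} for lower-codimensional faces where it is not stated (the relevant bounds \eqref{eq:20181013-1}--\eqref{eq:20181013-2} carry an $r$-dependent power), and the final sum you compute, $k+1-m+(m-j)$, should be $k-m+j$; it happens to equal $k+1-m$ only for $j=1$. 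The fix is simply to abandon the all-at-once Green's identity and use the one-derivative-at-a-time telescoping above, which never creates the problematic lower-dimensional terms.
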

\begin{proof}
First we notice that
\begin{align}
&(\nabla^mu, \nabla_h^mv_h)-(f, v_h)\notag\\
=&\sum_{i=0}^{m-1}(-1)^i\Big ((\div^i\nabla^mu, \nabla_h^{m-i}v_h)+(\div^{i+1}\nabla^mu, \nabla_h^{m-(i+1)}v_h)\Big ).\label{eq:temp20180808-1}
\end{align}
For each term in the right hand side of~\eqref{eq:temp20180808-1}, applying integration by parts,~\eqref{eq:weakcontinuitygradient} with $s=m-(i+1)$,~\eqref{eq:PFerror} and~\eqref{eq:weakcontinuityestimate}, we get
\begin{align*}
&(-1)^i\Big ((\div^i\nabla^mu, \nabla_h^{m-i}v_h)+(\div^{i+1}\nabla^mu, \nabla_h^{m-(i+1)}v_h)\Big ) \\
=&(-1)^i\sum_{K\in\mathcal T_h}((\div^i\nabla^mu)\nu, \nabla_h^{m-(i+1)}v_h)_{\partial K} \\
=&(-1)^i\sum_{F\in\mathcal{F}_h^1}((\div^i\nabla^mu)\nu_{F,1}, \llbracket\nabla_h^{m-(i+1)}v_h\rrbracket)_{F} \\
=&(-1)^i\sum_{F\in\mathcal{F}_h^1}((\div^i\nabla^mu)\nu_{F,1}-Q_{k-(m+i)}^F((\div^i\nabla^mu)\nu_{F,1}), \llbracket\nabla_h^{m-(i+1)}v_h\rrbracket)_{F} \\
\lesssim & h^{k+1-m}|u|_{k+1}|v_h|_{m,h},
\end{align*}
as required.
\end{proof}

When the order $k$ is not high enough, the mean value of $\nabla_h^sv_h$ is only continuous over some low-dimensional face of $F$ for $s<2m-1-k$. In this case, we divide the consistency error into two parts. The first part is estimated by using the weak continuity \eqref{eq:weakcontinuitygradient} as \textcolor[rgb]{1.00,0.00,0.00}{in} the proof of \eqref{eq:consistencyerror1}, while the second part is estimated by using  the weak continuity \eqref{eq:weakcontinuitygradient2} through employing the Lagrange element space as a bridge. %$H^1$-conforming element as a bridge.

\begin{lemma}
Let $u\in H_0^m(\Omega)\cap H^{2m-1}(\Omega)$ be the solution of the polyharmonic equation~\eqref{eq:polyharmonic}. Assume $m\leq k< 2m-1$. Then it holds
\begin{equation}\label{eq:consistencyerror2}
(\nabla^mu, \nabla_h^mv_h)-(f, v_h)\lesssim \Big (\sum_{i=k+1-m}^{m-1}h^i|u|_{m+i} + h^{m}\|f\|_{0}\Big )|v_h|_{m,h} \;\;\forall~v_h\in V_h.
\end{equation}
\end{lemma}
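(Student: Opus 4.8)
The plan is to combine the two mechanisms described in the paragraph preceding the lemma: the high-order part of the integrand is handled exactly as in Lemma~\ref{lem:consistencyerror1} via the weak continuity \eqref{eq:weakcontinuitygradient}, while the low-order part, where \eqref{eq:weakcontinuitygradient} no longer controls enough moments, is handled through the Lagrange bridge space $S_h^{m,r}$ from Lemma~\ref{lem:connectionerror}. As in \eqref{eq:temp20180808-1}, I would first write
\[
(\nabla^mu,\nabla_h^mv_h)-(f,v_h)=\sum_{i=0}^{m-1}(-1)^i\Big((\div^i\nabla^mu,\nabla_h^{m-i}v_h)+(\div^{i+1}\nabla^mu,\nabla_h^{m-(i+1)}v_h)\Big),
\]
and then integrate by parts term by term to obtain, for each $i$, a sum over faces $F\in\mathcal F_h^1$ of the form $(-1)^i\sum_F((\div^i\nabla^mu)\nu_{F,1},\llbracket\nabla_h^{m-(i+1)}v_h\rrbracket)_F$. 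Here $s:=m-(i+1)$ runs from $m-1$ down to $0$.

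For the indices $i$ with $s=m-(i+1)\ge 2m-1-k$, i.e. $i\le k-m$, the weak continuity \eqref{eq:weakcontinuitygradient} permits subtracting the projection $Q^F_{k-(2m-1-s)}$ from $(\div^i\nabla^mu)\nu_{F,1}$ (note $(\div^i\nabla^mu)\nu_{F,1}$ is a tensor of order $s$), and exactly as in Lemma~\ref{lem:consistencyerror1} the combination of \eqref{eq:PFerror} and the weak-continuity jump bound \eqref{eq:weakcontinuityestimate} yields a contribution bounded by $h^{k+1-m}|u|_{k+1}|v_h|_{m,h}$, which is dominated by the $i=k+1-m$ term $h^{k+1-m}|u|_{m+(k+1-m)}|v_h|_{m,h}$ appearing on the right-hand side of \eqref{eq:consistencyerror2} (using $u\in H^{2m-1}$ so $|u|_{m+i}$ makes sense for $i\le m-1$). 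For the remaining indices $i$ with $k-m< i\le m-1$, i.e. $s=m-(i+1)<2m-1-k$, only \eqref{eq:weakcontinuitygradient2} is available: the mean of $\llbracket\nabla_h^s v_h\rrbracket$ vanishes on faces $e\in\mathcal F^{m-s-1}(F)$ of codimension $m-s-1$ in $F$, not over $F$ itself. For these terms I would not subtract a face-projection of the data but instead insert the Lagrange bridge: by Lemma~\ref{lem:connectionerror} there is $\tau_s=\tau_s(v_h)\in S_h^{m,s}\subset H_0^1(\Omega;\mathbb T_n(s))$ with $|\nabla_h^s v_h-\tau_s|_{j,h}\lesssim h^{m-s-j}|v_h|_{m,h}$. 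Writing
\[
\sum_{F\in\mathcal F_h^1}((\div^i\nabla^mu)\nu_{F,1},\llbracket\nabla_h^{s}v_h\rrbracket)_F=\sum_{F}((\div^i\nabla^mu)\nu_{F,1},\llbracket\nabla_h^{s}v_h-\tau_s\rrbracket)_F,
\]
since $\tau_s$ is globally $H^1$-conforming its jump across any interior $F$ is zero and it vanishes on $\partial\Omega$; then I would reverse the element-wise integration by parts (move the boundary term back into $\nabla_h^s v_h-\tau_s$ inside each $K$) to get $\sum_{K}\big((\div^{i+1}\nabla^mu,\nabla_h^{s}v_h-\tau_s)_K+(\div^i\nabla^mu,\nabla_h^{s+1}v_h-\nabla\tau_s)_K\big)$, and bound this by $\big(\|\div^{i+1}\nabla^mu\|_0 h^{m-s}+\|\div^i\nabla^mu\|_0 h^{m-s-1}\big)|v_h|_{m,h}$ via the approximation estimate of Lemma~\ref{lem:connectionerror}. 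Recalling $\div^{i+1}\nabla^m u$ has order $2m-i-1$ and $\div^i\nabla^m u$ has order $2m-i$, and that $s+1=m-i$, both terms are $\lesssim h^{\,i}|u|_{\,2m-i}\,|v_h|_{m,h}$ when $i\le m-1$; for the top index $i=m-1$ one uses $\div^m\nabla^m u=(-1)^m(-\Delta)^m u=(-1)^m f$ so that term becomes $\lesssim h^{m}\|f\|_0|v_h|_{m,h}$, matching the last term in \eqref{eq:consistencyerror2}. Reindexing $i\mapsto i$ with $k+1-m\le i\le m-1$ on this low-order block gives precisely the sum $\sum_{i=k+1-m}^{m-1}h^i|u|_{m+i}$ together with the $h^m\|f\|_0$ term, and summing the two blocks finishes the proof.

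\textbf{Main obstacle.} The delicate point is the bookkeeping at the interface between the two regimes—checking that the range $s=m-(i+1)$ with $2m-1-k\le s\le m-1$ exactly corresponds to the part already covered by the Lemma~\ref{lem:consistencyerror1} argument, while $0\le s< 2m-1-k$ needs the bridge, and that the exponents of $h$ produced by the bridge argument ($h^{m-s-1}=h^{\,i}$) line up with the claimed powers $h^i$ for $i\ge k+1-m$. One must also be careful that the tensor $(\div^i\nabla^m u)\nu_{F,1}$ really is of tensor order $s=m-i-1$ so that \eqref{eq:weakcontinuitygradient}–\eqref{eq:weakcontinuitygradient2} and Lemma~\ref{lem:connectionerror} apply with the correct index, and that the regularity $u\in H^{2m-1}$ is exactly enough to make every norm $|u|_{2m-i}$ with $0\le i\le m-1$ finite. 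No single step is deep; the risk is purely in getting all the indices consistent.
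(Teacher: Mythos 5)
Your outline follows the paper's proof essentially step by step: the same telescoping decomposition \eqref{eq:temp20180808-1}, the same split at $i=k-m$ (the paper's $E_1$ handled exactly as in Lemma~\ref{lem:consistencyerror1}, the paper's $E_2$ and $E_3$ handled via the conforming companion $\tau_{m-(i+1)}\in S_h^{m,m-(i+1)}$ of Lemma~\ref{lem:connectionerror}), and the same use of the equation $f=(-\Delta)^m u$ at the top index $i=m-1$ to trade $h^m|u|_{2m}$ (unavailable under $u\in H^{2m-1}$) for $h^m\|f\|_0$. Two slips should be fixed, though. First, the derivative bookkeeping: $\div^{i}\nabla^m u$ consists of $(m+i)$-th derivatives of $u$, not $(2m-i)$-th, so the correct per-term bound in the low block is $\big(h^{i}|u|_{m+i}+h^{i+1}|u|_{m+i+1}\big)|v_h|_{m,h}$, exactly as in the paper; your intermediate claim ``both terms are $\lesssim h^{i}|u|_{2m-i}|v_h|_{m,h}$'' is false as stated, is inconsistent with your own final sum $\sum_{i=k+1-m}^{m-1}h^i|u|_{m+i}$, and in general $h^{i}|u|_{2m-i}$ is not dominated by the right-hand side of \eqref{eq:consistencyerror2}.

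Second, the detour through face jumps in the low block breaks down at $i=m-1$: under the assumed regularity $u\in H^{2m-1}(\Omega)$ one only has $\div^{m-1}\nabla^m u\in L^2(\Omega)$, so its trace on $(n-1)$-dimensional faces is not defined and the intermediate expression $\sum_F\big((\div^{m-1}\nabla^m u)\nu_{F,1},\llbracket v_h\rrbracket\big)_F$ (and the elementwise integration by parts producing it) is not justified. The paper avoids this entirely by never passing to faces for these indices: since $\tau_{m-(i+1)}\in H_0^1(\Omega;\mathbb T_n(m-(i+1)))$, it simply subtracts the global volume identities \eqref{eq:temp20180809-2} and \eqref{eq:temp20180809-3}, which is precisely the expression you arrive at after ``reversing'' the integration by parts. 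So write that subtraction directly, and with the corrected exponents the rest of your argument coincides with the paper's.
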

\begin{proof}
Similarly as~\eqref{eq:temp20180808-1}, we have
\begin{equation}\label{eq:temp20180808-2}
(\nabla^mu, \nabla_h^mv_h)-(f, v_h)=E_1+E_2+E_3,
\end{equation}
where
\begin{align*}
E_1&:=\sum_{i=0}^{k-m}(-1)^i\Big ((\div^i\nabla^mu, \nabla_h^{m-i}v_h)+(\div^{i+1}\nabla^mu, \nabla_h^{m-(i+1)}v_h)\Big ),
\\
E_2&:=\sum_{i=k-m+1}^{m-2}(-1)^i\Big ((\div^i\nabla^mu, \nabla_h^{m-i}v_h)+(\div^{i+1}\nabla^mu, \nabla_h^{m-(i+1)}v_h)\Big ),
\\
E_3&:=((-\div)^{m-1}\nabla^mu, \nabla_hv_h)-(f, v_h).
\end{align*}
By the same argument as in the proof of Lemma~\ref{lem:consistencyerror1}, we have
\begin{equation}\label{eq:E1}
E_1\lesssim h^{k+1-m}|u|_{k+1}|v_h|_{m,h}.
\end{equation}
Next let us estimate $E_2$ and $E_3$. By~\eqref{eq:connectionerror}, for each $k-m+1\leq i\leq m-1$, there exists $\tau_{m-(i+1)}\in S_h^{m, m-(i+1)}$ such that
\begin{equation}\label{eq:temp20180809-1}
|\nabla_h^{m-(i+1)}v_h-\tau_{m-(i+1)}|_{j,h}\lesssim h^{i+1-j}|v_h|_{m,h}
\quad \textrm{ for }\; j=0, 1.
\end{equation}
Since $\tau_{m-(i+1)}\in H_0^1(\Omega; \mathbb T_{n}(m-(i+1)))$, we get for $i=k-m+1,\cdots, m-2$
\begin{equation}\label{eq:temp20180809-2}
(\div^i\nabla^mu, \nabla\tau_{m-(i+1)})+(\div^{i+1}\nabla^mu, \tau_{m-(i+1)})=0,
\end{equation}
\begin{equation}\label{eq:temp20180809-3}
((-\div)^{m-1}\nabla^mu, \nabla\tau_{0})-(f, \tau_{0})=0.
\end{equation}
For $i=k-m+1,\cdots, m-2$, it follows from~\eqref{eq:temp20180809-1}-\eqref{eq:temp20180809-2}
\begin{align*}
&\,(-1)^i\Big ((\div^i\nabla^mu, \nabla_h^{m-i}v_h)+(\div^{i+1}\nabla^mu, \nabla_h^{m-(i+1)}v_h)\Big ) \\
=&\, (-1)^i(\div^i\nabla^mu, \nabla_h(\nabla_h^{m-(i+1)}v_h-\tau_{m-(i+1)})) \\
&\,+(-1)^i(\div^{i+1}\nabla^mu, \nabla_h^{m-(i+1)}v_h-\tau_{m-(i+1)})  \\
\lesssim &\, h^i|u|_{m+i}|v_h|_{m,h} + h^{i+1}|u|_{m+i+1}|v_h|_{m,h}.
\end{align*}
Thus we obtain
\begin{equation}\label{eq:E2}
E_2\lesssim \sum_{i=k+1-m}^{m-1}h^i|u|_{m+i} |v_h|_{m,h}.
\end{equation}
Similarly, employing~\eqref{eq:temp20180809-3} and~\eqref{eq:temp20180809-1}, we get
\begin{align*}
E_3&=((-\div)^{m-1}\nabla^mu, \nabla_hv_h)-(f, v_h)\\
&=((-\div)^{m-1}\nabla^mu, \nabla_h(v_h-\tau_{0}))-(f, v_h-\tau_{0})\\
&\lesssim h^{m-1}|u|_{2m-1}|v_h|_{m,h} + h^{m}\|f\|_{0}|v_h|_{m,h},
\end{align*}
which together with~\eqref{eq:temp20180808-2}-\eqref{eq:E1} and~\eqref{eq:E2} ends the proof.
\end{proof}

We then consider the perturbation of the right hand side. Namely replace the $L^2$-inner product $(f, v_h)$ by an approximated one $\langle f, v_h \rangle$ defined in \eqref{eq:fvh}.
\begin{lemma}\label{lem:consistencyerror}
Let $u\in H_0^m(\Omega)\cap H^{r}(\Omega)$ with $r=\max\{k+1, 2m-1\}$ be the solution of the polyharmonic equation~\eqref{eq:polyharmonic}.
Assume $f\in H^{\ell}(\mathcal T_h)$ with $\ell=\max\{0,k+1-2m\}$. Then it holds for any $v_h\in V_h$
\begin{align}
&(\nabla^mu, \nabla_h^mv_h)-\langle f, v_h \rangle\notag\\
\lesssim& h^{k+1-m}(\|u\|_{r}+h\|f\|_0+h^{\max\{0,2m-k-1\}}|f|_{\ell,h})|v_h|_{m,h}.\label{eq:consistencyerror}
\end{align}
\end{lemma}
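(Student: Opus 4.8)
The plan is to split the consistency error
$$(\nabla^mu, \nabla_h^mv_h)-\langle f, v_h \rangle = \big[(\nabla^mu, \nabla_h^mv_h)-(f, v_h)\big] + \big[(f, v_h) - \langle f, v_h\rangle\big],$$
so that the first bracket is controlled by the two previous lemmas (Lemma~\ref{lem:consistencyerror1} when $k\geq 2m-1$, and \eqref{eq:consistencyerror2} when $m\leq k<2m-1$), and only the data perturbation term $(f,v_h)-\langle f,v_h\rangle$ requires new work. In the first regime $k\geq 2m-1$, Lemma~\ref{lem:consistencyerror1} already gives the bound $h^{k+1-m}|u|_{k+1}|v_h|_{m,h}$, which fits under $\|u\|_r$ with $r=k+1$; in the second regime, \eqref{eq:consistencyerror2} gives $\big(\sum_{i=k+1-m}^{m-1}h^i|u|_{m+i}+h^m\|f\|_0\big)|v_h|_{m,h}$. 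The leading term there is $h^{k+1-m}|u|_{k+1}$ (the $i=k+1-m$ term), and all higher-$i$ terms are of even higher order in $h$ and absorbed into $h^{k+1-m}\|u\|_r$ with $r=2m-1$; the $h^m\|f\|_0$ term is exactly $h^{k+1-m}\cdot h^{(2m-k-1)}\|f\|_0$, which is $\lesssim h^{k+1-m}\cdot h\|f\|_0$ since $2m-k-1\geq 1$ in that regime. So the $(\nabla^mu,\nabla_h^mv_h)-(f,v_h)$ contribution is already in the desired form in both cases.

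For the data perturbation term I would go case by case according to the definition \eqref{eq:fvh}. When $m\leq k\leq 2m-1$, $\langle f,v_h\rangle=(f,\Pi_hv_h)$, so $(f,v_h)-\langle f,v_h\rangle=(f,v_h-\Pi_hv_h)$; I would write this elementwise as $\sum_K (f-Q_0^Kf, v_h-\Pi^Kv_h)_K$ using that $\Pi^K$ preserves constants (so $Q_0^K(v_h-\Pi^Kv_h)$ need not vanish, but $(Q_0^Kf, v_h-\Pi^Kv_h)_K$ can be written against $v_h-\Pi^Kv_h$ which has zero... actually more carefully: subtract $T_h u$ or use that $v_h-\Pi^Kv_h$ is $L^2$-orthogonal to nothing in particular), then bound via Cauchy--Schwarz, $\|f-Q_0^Kf\|_{0,K}\lesssim h_K\|f\|_{1,K}$ is too strong since only $f\in L^2$ — instead use the Poincaré-type estimate $\|v_h-\Pi^Kv_h\|_{0,K}\lesssim h_K^m|v_h-\Pi^Kv_h|_{m,K}\lesssim h_K^m|v_h|_{m,K}$ (from a Bramble--Hilbert argument on $v_h-\Pi^Kv_h\in H^m(K)$ together with the control of lower-order moments built into $\Pi^K$ via \eqref{eq:H2projlocal2}), giving $(f,v_h-\Pi_hv_h)\lesssim h^m\|f\|_0|v_h|_{m,h}$; since $k\leq 2m-1$ means $h^m\leq h^{k+1-m}\cdot h^{2m-k-1}\leq h^{k+1-m}\cdot h$, this is $\lesssim h^{k+1-m}\cdot h\|f\|_0|v_h|_{m,h}$. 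When $2m\leq k\leq 3m-2$, $\langle f,v_h\rangle=(f,Q_h^{m-1}v_h)$ and $(f,v_h)-\langle f,v_h\rangle=(f-Q_h^{m-1}f, v_h-Q_h^{m-1}v_h)$ (inserting $Q_h^{m-1}$ on $f$ by self-adjointness of $Q_h^{m-1}$); bounding $\|v_h-Q_h^{m-1}v_h\|_{0,K}\lesssim h_K^m|v_h|_{m,K}$ and $\|f-Q_h^{m-1}f\|_{0,K}\lesssim\|f\|_0$ gives again $\lesssim h^m\|f\|_0|v_h|_{m,h}\lesssim h^{k+1-m}h\|f\|_0|v_h|_{m,h}$ since $k\leq 3m-2$ yields $m\geq k+2-2m$, hence $h^m\lesssim h^{k+1-m}\cdot h^{2m-k-1}\lesssim h^{k+1-m}h$. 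When $k\geq 3m-1$, $\langle f,v_h\rangle=(f,Q_h^{k-2m}v_h)$ so $(f,v_h)-\langle f,v_h\rangle=(f-Q_h^{k-2m}f, v_h-Q_h^{k-2m}v_h)$; here $\ell=k+1-2m\geq m-1\geq 0$, so $\|f-Q_h^{k-2m}f\|_{0,K}\lesssim h_K^{k+1-2m}|f|_{\ell,K}$ by \eqref{eq:PKerror}, and $\|v_h-Q_h^{k-2m}v_h\|_{0,K}\lesssim h_K^m|v_h|_{m,K}$ (standard approximation, using $k-2m\geq m-1$ so the projection has enough degree), giving $\lesssim h^{k+1-m}|f|_{\ell,h}|v_h|_{m,h}$, which is the $h^{\max\{0,2m-k-1\}}|f|_{\ell,h}$ term with the exponent $=0$.

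Collecting the three cases, the data perturbation is in every case bounded by $h^{k+1-m}\big(h\|f\|_0 + h^{\max\{0,2m-k-1\}}|f|_{\ell,h}\big)|v_h|_{m,h}$, and combined with the first bracket bounded by $h^{k+1-m}\|u\|_r|v_h|_{m,h}$ this yields \eqref{eq:consistencyerror}. The main obstacle I anticipate is not any single estimate but getting the bookkeeping of the $h$-powers exactly right across the regime boundaries $k=2m-1$, $k=2m$, $k=3m-2$, $k=3m-1$ — in particular verifying that $h^m$ (which is what the crude estimate on $v_h-\Pi^Kv_h$ or $v_h-Q_h^{\bullet}v_h$ naturally produces) is always dominated by $h^{k+1-m}$ times the advertised factor, which forces the appearance of the $\max\{0,2m-k-1\}$ exponent and the separate $h\|f\|_0$ term; a secondary technical point is justifying $\|v_h-\Pi^Kv_h\|_{0,K}\lesssim h_K^m|v_h|_{m,K}$, i.e. a Poincaré inequality on $\ker$-type subspaces, which follows from Bramble--Hilbert \eqref{eq:Bramble-Hilbert} once one notes the moment conditions \eqref{eq:H2projlocal2} pin down the $\mathbb P_{m-1}$ component of $v_h-\Pi^Kv_h$ up to controllable terms.
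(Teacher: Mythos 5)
Your overall structure (splitting off $(f,v_h)-\langle f,v_h\rangle$, treating it case by case according to \eqref{eq:fvh}, and using a Poincar\'e-type bound $\|v_h-\Pi^Kv_h\|_{0,K}\lesssim h_K^m|v_h|_{m,K}$ for $m\leq k\leq 2m-1$) is the same as the paper's, and the first and third cases are essentially right. But there is a genuine gap in the middle regime $2m\leq k\leq 3m-2$. There you bound $\|f-Q_h^{m-1}f\|_{0}\lesssim\|f\|_0$ and arrive at $h^{m}\|f\|_0|v_h|_{m,h}$, then claim $h^m\lesssim h^{k+1-m}\cdot h^{2m-k-1}\lesssim h^{k+1-m}h$. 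The last step requires $2m-k-1\geq 1$, i.e.\ $k\leq 2m-2$, which contradicts $k\geq 2m$ (your inequality ``$m\geq k+2-2m$'' is not the one you need, which is $m\geq k+2-m$). In fact for $k\geq 2m$ one has $h^m\gg h^{k+1-m}$, so the bound $h^m\|f\|_0$ is strictly too weak: e.g.\ for $m=2$, $k=4$ it gives $O(h^2)$ while the lemma asserts $O(h^3)$, and it cannot be absorbed into $h^{k+1-m}|f|_{\ell,h}$ either, since that term carries the stronger seminorm $|f|_{\ell,h}$ with $\ell=k+1-2m\geq 1$.

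The missing ingredient is precisely the regularity $f\in H^{\ell}(\mathcal T_h)$, which must be spent in this regime. The paper's proof inserts $Q_h^{k-2m}f$ rather than $Q_h^{m-1}f$: since $k-2m\leq m-1$, the function $v_h-Q_h^{m-1}v_h$ is $L^2$-orthogonal to $\mathbb P_{k-2m}(\mathcal T_h)$, so $(f,v_h)-\langle f,v_h\rangle=(f-Q_h^{k-2m}f,\,v_h-Q_h^{m-1}v_h)$, and then \eqref{eq:PKerror} gives $\|f-Q_h^{k-2m}f\|_0\lesssim h^{k+1-2m}|f|_{\ell,h}$ and $\|v_h-Q_h^{m-1}v_h\|_0\lesssim h^{m}|v_h|_{m,h}$, producing the required $h^{k+1-m}|f|_{\ell,h}|v_h|_{m,h}$ (this is exactly the term with exponent $\max\{0,2m-k-1\}=0$). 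Alternatively, your own insertion of $Q_h^{m-1}f$ can be repaired by using its approximation property, $\|f-Q_h^{m-1}f\|_0\lesssim h^{\ell}|f|_{\ell,h}$ (valid since $\ell\leq m-1$ here), which again yields $h^{\ell+m}=h^{k+1-m}$. A further minor point: in the case $m\leq k\leq 2m-1$ your bound $h^m\|f\|_0$ is fine, but at $k=2m-1$ it equals $h^{k+1-m}\|f\|_0$, so it lands under the $h^{\max\{0,2m-k-1\}}|f|_{\ell,h}$ term (with $\ell=0$), not under $h^{k+1-m}\cdot h\|f\|_0$ as you state; this does not affect the conclusion.
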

\begin{proof}
%\LC{Use the face $\Pi u = \Pi I_h u$ as the projection is uniquely determined by the d.o.f. and $u$ and $I_h u$ share the same d.o.f. by definition. So we can change $a_h(I_hu, v_h)$ to $a_h(u, v_h)$ and estimate $u - Q_h u$ only!}
It follows from~\eqref{eq:consistencyerror1} and~\eqref{eq:consistencyerror2}
\[
(\nabla^mu, \nabla_h^mv_h)-(f, v_h)\lesssim h^{k+1-m}(\|u\|_{r}+h\|f\|_0)|v_h|_{m,h}.
\]
For $m\leq k\leq 2m-1$, we get from the local Poincar\'e inequality \eqref{eq:poincare}
\[
(f, v_h)-\langle f, v_h \rangle=(f, v_h-\Pi_hv_h)\lesssim h^{m}\|f\|_{0}|v_h|_{m,h}.
\]
For $k\geq 2m$, it holds from \eqref{eq:PKerror}
\begin{align*}
(f, v_h)-\langle f, v_h \rangle&=\Big(f, v_h-Q_h^{\max\{m-1, k-2m\}}v_h\Big)\\
&=\Big(f-Q_h^{k-2m}f, v_h-Q_h^{\max\{m-1, k-2m\}}v_h\Big)\\
&\leq\|f-Q_h^{k-2m}f\|_0\|v_h-Q_h^{m-1}v_h\|_0\\
&\lesssim h^{k+1-m}|f|_{k+1-2m,h}|v_h|_{m,h}.
\end{align*}
Thus we conclude~\eqref{eq:consistencyerror} from the last three inequalities.
\end{proof}

\subsection{Error estimate}
Now we are in a position to present the optimal order convergence of our nonconforming virtual element method.
\begin{theorem}
Let $u\in H_0^m(\Omega)\cap H^{r}(\Omega)$ with $r=\max\{k+1, 2m-1\}$ be the solution of the polyharmonic equation~\eqref{eq:polyharmonic}, and $u_h\in V_h$ be the nonconforming virtual element method~\eqref{polyharmonicNoncfmVEM}. Assume the mesh $\mathcal T_h$ satisfies conditions (A1) and (A2). Assume $f\in H^{\ell}(\mathcal T_h)$ with $\ell=\max\{0,k+1-2m\}$.
Then it holds
\begin{equation}\label{eq:energyerror}
|u-u_h|_{m,h}\lesssim h^{k+1-m}(\|u\|_{r}+h\|f\|_0+h^{\max\{0,2m-k-1\}}|f|_{\ell,h}).
\end{equation}
\end{theorem}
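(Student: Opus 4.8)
The plan is to run a standard second Strang lemma argument for nonconforming methods, combining the interpolation error estimate \eqref{eq:Iherror}, the discrete Galerkin orthogonality \eqref{eq:20181012-2}, the $k$-consistency \eqref{eq:kconsistency}, and the consistency error estimate \eqref{eq:consistencyerror}. First I would set $e_h := u_h - I_h u \in V_h$ and use coercivity \eqref{eq:ahcoercivity} to write $|e_h|_{m,h}^2 \lesssim a_h(e_h, e_h) = a_h(u_h, e_h) - a_h(I_h u, e_h)$. The term $a_h(u_h, e_h)$ equals $\langle f, e_h\rangle$ by the discrete problem \eqref{polyharmonicNoncfmVEM}. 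For $a_h(I_h u, e_h)$, I would insert $\pm a_h(T_h u, e_h)$ on each element (or more directly exploit \eqref{eq:20181012-2}): by Lemma~\ref{lem:20190605}, $a_{h,K}(u - I_h u, e_h) = 0$, hence $a_{h,K}(I_h u, e_h) = a_{h,K}(u, e_h)$ — but $u$ need not lie in $W_k(K)$, so instead I would write $a_{h,K}(I_h u, e_h) = a_{h,K}(I_h u - T_h u, e_h) + a_{h,K}(T_h u, e_h)$, bound the first summand by continuity \eqref{eq:ahbound} and the interpolation/Bramble--Hilbert estimates, and on the second summand use $k$-consistency \eqref{eq:kconsistency} to replace $a_{h,K}(T_h u, e_h)$ by $(\nabla^m T_h u, \nabla^m e_h)_K$, which is close to $(\nabla^m u, \nabla^m_h e_h)$ up to a term controlled by $|u - T_h u|_{m,h} |e_h|_{m,h} \lesssim h^{k+1-m}|u|_{k+1}|e_h|_{m,h}$.

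Collecting these, I arrive at an estimate of the form
\begin{equation*}
|e_h|_{m,h}^2 \lesssim \big[(\nabla^m u, \nabla^m_h e_h) - \langle f, e_h\rangle\big] + h^{k+1-m}\big(\|u\|_r + h\|f\|_0 + h^{\max\{0,2m-k-1\}}|f|_{\ell,h}\big)|e_h|_{m,h},
\end{equation*}
where the consistency-type bracket is exactly what Lemma~\ref{lem:consistencyerror} controls by $h^{k+1-m}(\|u\|_r + h\|f\|_0 + h^{\max\{0,2m-k-1\}}|f|_{\ell,h})|e_h|_{m,h}$. Dividing through by $|e_h|_{m,h}$ yields the bound $|u_h - I_h u|_{m,h} \lesssim h^{k+1-m}(\|u\|_r + h\|f\|_0 + h^{\max\{0,2m-k-1\}}|f|_{\ell,h})$.

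Finally I would close with the triangle inequality $|u - u_h|_{m,h} \le |u - I_h u|_{m,h} + |I_h u - u_h|_{m,h}$, using the interpolation error estimate \eqref{eq:Iherror} summed over $K\in\mathcal T_h$ (noting $\|u\|_{k+1} \le \|u\|_r$) for the first term and the bound just derived for the second. One subtlety worth a careful line: $I_h$ maps into $W_k(K)$ which may differ from $V_k(K)$ when $2m\le k<3m-1$, but $\mathbb P_k(K)\subset W_k(K)$ and the $k$-consistency \eqref{eq:kconsistency} is stated on $W_k(K)$, so the argument goes through unchanged; likewise $e_h\in V_h$ guarantees the weak-continuity relations \eqref{eq:weakcontinuitygradient}--\eqref{eq:weakcontinuitygradient2} needed inside Lemma~\ref{lem:consistencyerror}.

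The main obstacle is bookkeeping rather than any single deep step: one must track the two regimes $k\ge 2m-1$ and $m\le k<2m-1$ through the consistency estimate, handle the three cases of the discrete right-hand side \eqref{eq:fvh} consistently, and make sure that the replacement of $u$ by $T_h u$ on each element (needed because $u\notin W_k(K)$) does not destroy the orthogonality one is exploiting. Once Lemma~\ref{lem:consistencyerror} is invoked as a black box, the remaining work is the routine Strang-lemma assembly described above.
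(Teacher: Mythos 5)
Your proposal is correct and follows essentially the same route as the paper's proof: coercivity applied to $I_hu-u_h$, the discrete equation to replace $a_h(u_h,\cdot)$ by $\langle f,\cdot\rangle$, insertion of $T_hu$ elementwise so that the $k$-consistency \eqref{eq:kconsistency} and the bounds \eqref{eq:ahbound}, \eqref{eq:Iherror}, \eqref{eq:Bramble-Hilbert} reduce everything to the consistency term $(\nabla^mu,\nabla_h^mv_h)-\langle f,v_h\rangle$ handled by Lemma~\ref{lem:consistencyerror}, and a final triangle inequality with \eqref{eq:Iherror}. The bookkeeping points you flag (the $W_k$ versus $V_k$ distinction and the three cases in \eqref{eq:fvh}) are indeed the only care needed, and they are resolved exactly as you indicate.
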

\begin{proof}
Let $v_h=I_hu-u_h$. From~\eqref{eq:ahbound},~\eqref{eq:Iherror} and~\eqref{eq:Bramble-Hilbert}, it holds
\begin{align}
&a_h(I_hu-T_hu, v_h)+(\nabla_h^m(T_hu-u), \nabla_h^mv_h) \notag\\
\lesssim & |I_hu-T_hu|_{m,h}|v_h|_{m,h}+|u-T_hu|_{m,h}|v_h|_{m,h} \notag\\
\lesssim & (|u-I_hu|_{m,h}+|u-T_hu|_{m,h})|v_h|_{m,h} \lesssim  h^{k+1-m}|u|_{k+1}|v_h|_{m,h}. \label{eq:temp201806181}
\end{align}
Employing~\eqref{eq:ahcoercivity},~\eqref{polyharmonicNoncfmVEM} and~\eqref{eq:kconsistency}, we have
\begin{align*}
|I_hu-u_h|_{m,h}^2&\lesssim a_h(I_hu-u_h, v_h)=a_h(I_hu, v_h)- \langle f, v_h \rangle \\
&=a_h(I_hu-T_hu, v_h)+a_h(T_hu, v_h)-\langle f, v_h \rangle \\
&=a_h(I_hu-T_hu, v_h)+(\nabla_h^mT_hu, \nabla_h^mv_h)-\langle f, v_h \rangle \\
&=a_h(I_hu-T_hu, v_h)+(\nabla_h^m(T_hu-u), \nabla_h^mv_h) \\
&\quad\;+(\nabla^mu, \nabla_h^mv_h)-\langle f, v_h \rangle.
\end{align*}
Then we get from~\eqref{eq:temp201806181} and~\eqref{eq:consistencyerror}
\[
|I_hu-u_h|_{m,h}\lesssim h^{k+1-m}(\|u\|_{r}+h\|f\|_0+h^{\max\{0,2m-k-1\}}|f|_{\ell,h}).
\]
Finally we derive~\eqref{eq:energyerror} by combining the last inequality and~\eqref{eq:Iherror}.
\end{proof}

\section{Conclusion}

In view of a generalized Green's identity for $H^m$ inner product, we construct the $H^m$-nonconforming virtual elements of any order $k$ on any shape of polytope in $\mathbb R^n$ with constraints $m\leq n$ and $k\geq m$ in a unified way in this paper.
A rigorous and detailed convergence analysis is developed for the $H^m$-nonconforming virtual element methods, and the optimal error estimates are achieved.
When $m>n$, the generalized Green's identity for $H^m$ inner product, the key tool in this paper, will involve the derivative terms on zero-dimensional subsurfaces, i.e., nodes of the mesh. We will postpone the case $m>n$ in future works.

This paper is motivated by the theoretical purposes.
The numerical investigation of the virtual element method proposed in this paper is postponed to future works.
%, that the choice of the stabilization has certainly an effect on the numerical performance of the method.}
%, and the reason why you picked (4.2) is that it allows for the elegant analysis contained in the appendix and it mimics standard VEM stabilizations.

\section*{Acknowledgements}

The authors would like to thank Dr. Shuhao Cao for the insightful discussion and the anonymous referees for their incisive comments.

 \appendix
 \section{Norm equivalence}\label{sec:appendixA}

As mentioned before, it is difficult to derive the norm equivalence~\eqref{eq:SKequiv} directly from the norm equivalence of the finite-dimensional space due to the absence of an affine-equivalent reference polytope. We shall prove the norm equivalence~\eqref{eq:SKequiv} in this appendix by assuming that the mesh $\mathcal T_h$ satisfies conditions (A1) and (A2). For $m=1$, proofs on the norm equivalence for $H^1$ conforming VEM can be found in~\cite{Beirao-da-Veiga;Lovadina;Russo:2017Stability,BrennerSung2018,ChenHuang2018}.

With the help of the virtual triangulation $\mathcal T_h^*$, we can prove the inverse inequality of polynomial spaces on $K$ following the proof in \cite[Lemma 3.1]{ChenHuang2018}
\begin{equation}\label{eq:polyinverse}
 \| g \|_{0,K} \lesssim h_K^{-i}\| g \|_{-i,K} \quad \forall~g\in \mathbb P_k(K), \;i = 1,2,\ldots, m,
\end{equation}
where the constant depends only on the degree of polynomials $k$, the order of differentiation $m$, the dimension of space $n$, and the shape regularity and quasi-uniformity of the virtual triangulation $\mathcal T_h^*(K)$. %\mnote{ also on $k,m,n$.}

On the polynomial space, we have the normal equivalence of the $L^2$-norm of $g$ and $l^2$-norm of its d.o.f. Let $g = \sum_{i} g_{i}m_{i}$ be a polynomial on $F$, where $F\in\mathcal{F}^j(K)$ with $j\geq 1$. Denote by $\boldsymbol  g = (g_{i})$ the coefficient vector. Then the following norm equivalence holds (cf. \cite[Lemma~4.1]{ChenHuang2018})
\begin{equation}\label{eq:polynorm}
h_{F}^{(n-j)/2}\|\boldsymbol  g\|_{l^2} \lesssim \|g\|_{0,F}\lesssim h_F^{(n-j)/2}\|\boldsymbol  g\|_{l^2}.
\end{equation}
%\LC{Check Lemma 4.1 in Chen and Huang. Now the disk can be $B_K$ as $K$ is star-shaped. Change $K$ to general face $F$ and put a correct scaling.}

%We do have the inverse inequality for polynomials and the trace inequality, which are crucial tools in proving the norm equivalence~\eqref{eq:SKequiv}, although the inverse inequality for the virtual element space has not been established.

Take an element $K\in\mathcal T_h$. For any $F\in\mathcal{F}^j(K)$ with $j\geq 1$, let $\mathbb R_F^{n-j}$ be the $(n-j)$-dimensional affine space passing through $F$, $\mathcal{F}_F^j(K):=\{e\in\mathcal{F}^j(K): e\subset\mathbb R_F^{n-j}\}$, and
$$
\lambda_{F,i}:=\nu_{F,i}^{\intercal}\frac{\boldsymbol x-\boldsymbol x_F}{h_K},\quad i=1,\cdots, j.
$$
Apparently $\lambda_{F,i}|_F=0$, i.e. the $(n-1)$-dimensional face $\lambda_{F,i}=0$ passes through $F$.
If $K$ is a simplex and $F\in \mathcal F^{1}(K)$, $\lambda_{F,1}$ is just the barycenter coordinate when $h_F$ represents the height of $K$ corresponding to the base $F$.
And $\mathcal{F}_F^j(K)=\{F\}$ if $K$ is strictly convex.
For any $F\in\mathcal{F}^j(K)$ with $j\geq 1$, and $F'\in\mathcal{F}^j(K)\backslash\mathcal{F}_F^j(K)$, let $\nu_{F,F'}$ be some unit normal vector of $F'$ such that the hyperplane $\nu_{F,F'}^{\intercal}(\bs x-\bs x_{F'})=0$ does not pass through $F$.
Define bubble functions
\[
b_K:=\prod_{F\in\mathcal F^{1}(K)}\lambda_{F,1},
\]
$$
b_F:=\bigg(\prod_{F'\in\mathcal{F}^j(K)\backslash\mathcal{F}_F^j(K)}\nu_{F,F'}^{\intercal}\frac{\boldsymbol x-\boldsymbol x_{F'}}{h_K}\bigg)\bigg(\prod_{F'\in\mathcal{F}_F^j(K)}\prod_{e\in\mathcal F^{1}(F')}\nu_{F',e}^{\intercal}\frac{\boldsymbol x-\boldsymbol x_{e}}{h_K}\bigg),
$$
for each $F\in \mathcal F^{j}(K)$ with $1\leq j\leq n$. Notice that both $b_K$ and $b_F$ are polynomials.

\begin{lemma}
Let $K\in\mathcal T_h$. It holds
\begin{equation}\label{eq:inverseeq1}
h_K^m\|(-\Delta)^mv\|_{0,K}\lesssim \|\nabla^mv\|_{0,K}\quad\forall~v\in V_k(K)\cup W_k(K).
\end{equation}
\end{lemma}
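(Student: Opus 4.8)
The plan is to localize the polynomial $g := (-\Delta)^m v$ — which, by the definition of $V_k(K)$ (resp. $W_k(K)$), is a polynomial of degree at most $\max\{k-2m,\,m-1\}$ — by feeding a bubble‑weighted copy of $g$ into the generalized Green's identity \eqref{eq:HmGreen}, and then to invoke the norm equivalence and inverse inequality for polynomials on $K$. Since $v$ satisfies the hypotheses of Theorem~\ref{thm:HmGreen} (both $(-\Delta)^m v$ and the $D^{2m-j-|\alpha|}_{F,\alpha}(v)$ are polynomials, hence square integrable on $K$ and on the faces), identity \eqref{eq:HmGreen} is available for the pair $(v,w)$ with any $w\in H^m(K)$.

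Concretely, first I would set $w := b_K^{2m}\,g$. Because $b_K=\prod_{F\in\mathcal F^1(K)}\lambda_{F,1}$ is divisible by $\lambda_{F,1}$ for each $(n-1)$-face $F$, the polynomial $b_K^{2m}$ — and hence $w$ — vanishes to order $2m$ on $\partial K$ and to even higher order along the lower-dimensional faces, so that $w\in H_0^m(K)$ and, more precisely, $\partial^{|\alpha|}w/\partial\nu_F^\alpha=0$ on every $F\in\mathcal F^j(K)$ for all $|\alpha|\le m-j$. Consequently every boundary term on the right-hand side of \eqref{eq:HmGreen} drops out, leaving
\begin{equation*}
(\nabla^m v,\nabla^m w)_K=((-\Delta)^m v,\,w)_K=(g,\,b_K^{2m}g)_K=\|b_K^{m}g\|_{0,K}^2 .
\end{equation*}

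Next I would use two facts about polynomials on $K$, both obtained by the generalized scaling argument $\hat{\boldsymbol x}=(\boldsymbol x-\boldsymbol x_K)/h_K$ together with the norm equivalence on the finite-dimensional space $\mathbb P_N(\hat K)$ (with $N$ a fixed degree depending on $k,m,n$), exactly in the spirit of \eqref{eq:polyinverse}: the bubble lower bound $\|g\|_{0,K}^2\lesssim\|b_K^{m}g\|_{0,K}^2$, valid because $b_K^m g$ vanishes only where $b_K$ does, and the inverse inequality $\|\nabla^m w\|_{0,K}\lesssim h_K^{-m}\|w\|_{0,K}\le h_K^{-m}\|g\|_{0,K}$, where the last step uses $|b_K|\lesssim 1$ on $K$. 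Combining these with the Cauchy--Schwarz inequality,
\begin{equation*}
\|g\|_{0,K}^2\lesssim\|b_K^{m}g\|_{0,K}^2=(\nabla^m v,\nabla^m w)_K\le\|\nabla^m v\|_{0,K}\,\|\nabla^m w\|_{0,K}\lesssim h_K^{-m}\|\nabla^m v\|_{0,K}\,\|g\|_{0,K},
\end{equation*}
and dividing by $\|g\|_{0,K}$ (the estimate being trivial when $g=0$) yields $h_K^{m}\|(-\Delta)^m v\|_{0,K}=h_K^m\|g\|_{0,K}\lesssim\|\nabla^m v\|_{0,K}$, which is \eqref{eq:inverseeq1}.

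The Green's-identity bookkeeping is routine; the real obstacle is establishing the two polynomial estimates — the bubble lower bound and the $m$-th order inverse inequality — with constants depending only on the chunkiness parameter $\rho_K$, on $k,m,n$, and on the virtual triangulation $\mathcal T_h^*$, since there is no affine-equivalent reference polytope. This is precisely what the bubble functions $b_K$ (and, for the face versions needed in the remainder of the appendix, $b_F$) together with the sub-triangulation $\mathcal T_h^*$ of assumption (A2) are designed to handle: one transfers the classical simplicial bubble-function estimates through the simplexes of $\mathcal T_h^*$ and patches them over $K$, with extra care when $K$ is non-convex so that the sign changes of $b_K$ inside $K$ are absorbed by the polynomial norm equivalence on $K$.
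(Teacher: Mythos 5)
Your proposal is correct and follows essentially the same route as the paper's own proof: you test the generalized Green's identity with the bubble-weighted polynomial $b_K^{2m}(-\Delta)^m v\in H_0^m(K)$ (the paper's $\phi_K$), use the vanishing traces to kill all boundary terms, and conclude via Cauchy--Schwarz, the polynomial inverse inequality \eqref{eq:polyinverse}, and the scaling/norm-equivalence bound $\|(-\Delta)^m v\|_{0,K}^2\lesssim((-\Delta)^m v,\,b_K^{2m}(-\Delta)^m v)_K$. The only cosmetic difference is that you phrase the lower bound through $\|b_K^m g\|_{0,K}$ rather than directly through the pairing, which is the same estimate.
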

\begin{proof}
Let $\phi_K:=b_K^{2m}(-\Delta)^mv\in H_0^m(K)$, then $\|\phi_K\|_{0,K}\eqsim \|(-\Delta)^mv\|_{0,K}$. Using the scaling argument, the integration by parts and the inverse inequality for polynomials \eqref{eq:polyinverse}, we get
\begin{align*}
\|(-\Delta)^mv\|_{0,K}^2&\lesssim ((-\Delta)^mv, \phi_K)_K= (\nabla^mv, \nabla^m\phi_K)_K \\
&\leq \|\nabla^mv\|_{0,K}\|\nabla^m\phi_K\|_{0,K}\lesssim h_K^{-m}\|\nabla^mv\|_{0,K}\|\phi_K\|_{0,K} \\
&\lesssim h_K^{-m}\|\nabla^mv\|_{0,K}\|(-\Delta)^mv\|_{0,K},
\end{align*}
which induces the required inequality.
\end{proof}

\begin{lemma}
Let $K\in\mathcal T_h$.
For any positive integer $j\leq m$, $F\in\mathcal F^{j}(K)$, and $\alpha\in A_{j}$ with $|\alpha|\leq m-j$, we have
\begin{align}
&\sum_{F'\in\mathcal{F}_F^j(K)}h_K^{m-|\alpha|-j/2}\|D^{2m-j-|\alpha|}_{F', \alpha}(v)\|_{0,F'}\notag\\
\lesssim &\|\nabla^mv\|_{0,K} + h_K^{m}\|(-\Delta)^mv\|_{0,K}\notag\\
&+ \sum_{\ell=1}^{j-1}\sum_{e\in\mathcal F^{\ell}(K)}\sum_{\beta\in A_{\ell}\atop|\beta|\leq m-\ell}h_K^{m-|\beta|-\ell/2}\Big \|D^{2m-\ell-|\beta|}_{e, \beta}(v)\Big\|_{0,e} \notag\\
&+ \sum_{F'\in\mathcal{F}_F^j(K)}\sum_{\beta\in A_{j}\atop|\alpha|<|\beta|\leq m-j}h_K^{m-|\beta|-\ell/2}\Big \|D^{2m-j-|\beta|}_{F', \beta}(v)\Big\|_{0,F'}\label{eq:inverseeq2}
\end{align}
for all $v\in V_k(K)\cup W_k(K)$.
\end{lemma}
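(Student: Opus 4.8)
The plan is to run the bubble-function argument that underlies the efficiency proofs of a posteriori error estimators, now driven by the generalized Green's identity~\eqref{eq:HmGreen} and the inverse estimate~\eqref{eq:inverseeq1}. Fix $j$, $F\in\mathcal F^j(K)$ and $\alpha\in A_j$ with $|\alpha|\le m-j$, and write $g_{F'}:=D^{2m-j-|\alpha|}_{F',\alpha}(v)$ for $F'\in\mathcal F_F^j(K)$; by the definition of $V_k(K)\cup W_k(K)$ each $g_{F'}$ is a polynomial on $F'$. I would construct a polynomial test function $w$ on $K$ of the form $w=b_F^{2m}\big(\prod_{i=1}^{j}\lambda_{F,i}^{\alpha_i}/\alpha_i!\big)\,G$, where $G$ is a polynomial with $G|_{F'}=g_{F'}$ for every $F'\in\mathcal F_F^j(K)$ — the constant normal extension of $g_{F'}$ when $K$ is strictly convex, so that $\mathcal F_F^j(K)=\{F\}$, and in the non-convex case a polynomial lift separating the finitely many components of $\mathbb R_F^{n-j}\cap\partial K$, whose degree and norm are controlled by (A1)--(A2). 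Two features of this $w$ make the argument work. First, since $\lambda_{F,i}|_{F'}=0$, any normal derivative $\partial^{|\beta|}w/\partial\nu_{F'}^{\beta}$ with $|\beta|\le|\alpha|$ and $\beta\neq\alpha$ vanishes on $F'$, while $\partial^{|\alpha|}w/\partial\nu_{F'}^{\alpha}|_{F'}=c\,h_K^{-|\alpha|}b_F^{2m}|_{F'}\,g_{F'}$ for a fixed positive constant $c$. Second, $b_F^{2m}$ vanishes to order $2m$ on every hyperplane carrying a face of $\mathcal F^j(K)\setminus\mathcal F_F^j(K)$ and on every lower-dimensional face of the $F'$, so that all boundary integrals in~\eqref{eq:HmGreen} attached to faces outside the collection $\mathcal F_F^j(K)$, as well as all of codimension $>j$, disappear.

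I would then substitute $u=v$ and this $w$ into~\eqref{eq:HmGreen}. The surviving codimension-$j$ boundary terms attached to $\alpha$ give $c\,h_K^{-|\alpha|}\sum_{F'\in\mathcal F_F^j(K)}\|b_F^m g_{F'}\|_{0,F'}^2$, which by the polynomial norm equivalence~\eqref{eq:polynorm} and the fact that $b_F$ is a fixed, scaling-invariant bubble is $\eqsim h_K^{-|\alpha|}\sum_{F'}\|g_{F'}\|_{0,F'}^2$; this is the quantity I want to bound from above. The remaining terms on the right of~\eqref{eq:HmGreen} are exactly $(\nabla^m v,\nabla^m w)_K$; $((-\Delta)^m v,w)_K$; the codimension-$\ell$ terms with $\ell<j$ and $|\beta|\le m-\ell$; and the codimension-$j$, same-hyperplane terms with $|\alpha|<|\beta|\le m-j$ (those with $|\beta|\le|\alpha|$ and $\beta\neq\alpha$ vanish by the first feature above). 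Each of these four families matches, term for term, one of the four families on the right of~\eqref{eq:inverseeq2}, so it remains only to estimate, in each of them, the $w$-factor against $\big(\sum_{F'}\|g_{F'}\|_{0,F'}^2\big)^{1/2}$.

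For that I would use a scaling argument together with three tools: the polynomial norm equivalence~\eqref{eq:polynorm}, applied once on $F'$ and once on $K$, to get $\|w\|_{0,K}\lesssim h_K^{j/2}\big(\sum_{F'}\|g_{F'}\|_{0,F'}^2\big)^{1/2}$; the polynomial inverse inequality~\eqref{eq:polyinverse} to get $\|\nabla^i w\|_{0,K}\lesssim h_K^{-i}\|w\|_{0,K}$ for $0\le i\le m$; and the trace inequality~\eqref{L2trace} iterated along a chain of faces $K\supset F_1\supset\cdots\supset F_\ell$, all of diameter $\eqsim h_K$ by (A1)--(A2), to pass the relevant derivatives of $w$ from $K$ to a codimension-$\ell$ face while losing only the expected powers of $h_K$. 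Cauchy--Schwarz on each term and a count of powers of $h_K$ then turn the $(\nabla^m v,\nabla^m w)_K$ and $((-\Delta)^m v,w)_K$ contributions into $\|\nabla^m v\|_{0,K}$ and $h_K^{m}\|(-\Delta)^m v\|_{0,K}$, the codimension-$\ell$ contributions into $h_K^{m-|\beta|-\ell/2}\|D^{2m-\ell-|\beta|}_{e,\beta}(v)\|_{0,e}$, and the $|\beta|>|\alpha|$ contributions into $h_K^{m-|\beta|-j/2}\|D^{2m-j-|\beta|}_{F',\beta}(v)\|_{0,F'}$, each multiplied by $\big(\sum_{F'}\|g_{F'}\|_{0,F'}^2\big)^{1/2}$. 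Dividing through by that factor, multiplying by $h_K^{m-|\alpha|-j/2}$, and using $\#\mathcal F_F^j(K)\lesssim 1$ to pass from the $\ell^2$-sum back to the $\ell^1$-sum yields~\eqref{eq:inverseeq2}. The main obstacle is the construction of $w$ and the careful accounting of the boundary integrals in~\eqref{eq:HmGreen} — deciding which vanish, which is the principal term, and which are the admissible remainders — while keeping the $h_K$-exponents produced by the three scaling tools aligned with the precise weights in~\eqref{eq:inverseeq2}; the non-strictly-convex case, in which $\mathcal F_F^j(K)$ has several members, requires a little extra bookkeeping but is handled uniformly because $b_F^{2m}$ vanishes to high order on the faces shared between its members.
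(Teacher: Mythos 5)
Your overall strategy is the paper's: test the generalized Green's identity \eqref{eq:HmGreen} with a bubble-weighted extension of $D^{2m-j-|\alpha|}_{F',\alpha}(v)$ multiplied by $\prod_i\lambda^{\alpha_i}_{F,i}$, use the vanishing of the bubble on all faces outside the collection $\mathcal{F}_F^j(K)$ and of the $\lambda$-factor for the multi-indices $\beta$ with $|\beta|\leq|\alpha|$, $\beta\neq\alpha$, and then close with Cauchy--Schwarz, the polynomial inverse/trace estimates and the norm equivalence \eqref{eq:polynorm}. However, there is a genuine gap at the one point where the set $\mathcal{F}_F^j(K)$ actually matters: you require a \emph{single polynomial} $G$ on $K$ with $G|_{F'}=D^{2m-j-|\alpha|}_{F',\alpha}(v)|_{F'}$ for \emph{every} $F'\in\mathcal{F}_F^j(K)$. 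All faces $F'\in\mathcal{F}_F^j(K)$ lie in the same affine subspace $\mathbb R_F^{n-j}$, and each $F'$ is relatively open there, so the restriction of any polynomial $G$ to $\mathbb R_F^{n-j}$ is one fixed polynomial; it can coincide with the (in general unrelated) polynomials $D^{2m-j-|\alpha|}_{F',\alpha}(v)|_{F'}$ on several such faces only if they all happen to be restrictions of a common polynomial. Likewise no polynomial ``separates'' the components of $\mathbb R_F^{n-j}\cap\partial K$: a polynomial vanishing on one relatively open piece of that subspace vanishes on the whole subspace. So the proposed ``polynomial lift controlled by (A1)--(A2)'' does not exist when $\#\mathcal{F}_F^j(K)>1$, and with any admissible polynomial $G$ the codimension-$j$, index-$\alpha$ terms on the other faces $F''\in\mathcal{F}_F^j(K)$ produce cross terms $(D^{2m-j-|\alpha|}_{F'',\alpha}(v),\,b_F^{2m}G)_{F''}$ with no sign control, so the left-hand side of \eqref{eq:inverseeq2} is not recovered as a sum of squares.

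The paper resolves exactly this by giving up on a global polynomial: $\phi_F$ is defined \emph{piecewise}, on each cylinder $\mathbb R^n_{F'}=\{\bs x:\bs x_F^P\in F'\}$ as the constant normal extension of that face's own polynomial times $\frac{1}{\alpha!}h_K^{|\alpha|}b_{F'}^{2m}\prod_i\lambda_{F',i}^{\alpha_i}$, and as zero outside the union of the cylinders; the factor $b_{F'}^{2m}$ (which vanishes to order $2m$ on the hyperplanes through the faces of $\mathcal F^1(F')$, hence on the lateral cylinder boundaries) is what makes this piecewise function admissible in \eqref{eq:HmGreen} and kills all boundary terms except the principal one and the remainders listed in \eqref{eq:inverseeq2}. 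If you replace your global $G$ by this piecewise cylinder construction (and check the resulting function is smooth enough across the cylinder interfaces), the rest of your argument -- the vanishing bookkeeping, the scaling $\|\phi_F\|_{0,K}\lesssim\sum_{F'}h_K^{|\alpha|+j/2}\|D^{2m-j-|\alpha|}_{F',\alpha}(v)\|_{0,F'}$, and the Cauchy--Schwarz/inverse-estimate accounting of the $h_K$-powers -- goes through as you describe and coincides with the paper's proof.
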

\begin{proof}
Since $D^{2m-j-|\alpha|}_{F', \alpha}(v)|_{F'}$ is a polynomial for each $F'\in \mathcal{F}_F^j(K)$, we can regard $D^{2m-j-|\alpha|}_{F', \alpha}(v)|_{F'}$ as the function on the $(n-j)$-dimensional affine space $\mathbb R_F^{n-j}$. Then we extend the polynomial $D^{2m-j-|\alpha|}_{F', \alpha}(v)|_{F'}$ to $\mathbb R^n$. For any $\boldsymbol x\in \mathbb R^n$, let $\boldsymbol x_F^P$ be the projection of $\boldsymbol x$ on $\mathbb R_F^{n-j}$. Define
$$
E_K(D^{2m-j-|\alpha|}_{F', \alpha}(v))(\boldsymbol x):=D^{2m-j-|\alpha|}_{F', \alpha}(v)(\boldsymbol x_F^P).
$$
Let $\mathbb R_{F'}^{n}:=\{\bs x\in\mathbb R^n:\boldsymbol x_F^P\in F'\}$, and $\phi_{F}$ be a piecewise polynomial defined as
$$
\phi_{F}(\bs x)=\begin{cases}
\frac{1}{\alpha!}h_{K}^{|\alpha|}b_{F'}^{2m}E_K(D^{2m-j-|\alpha|}_{F', \alpha}(v))\prod\limits_{i=1}^j\lambda_{F',i}^{\alpha_i}, & \bs x\in\mathbb R_{F'}^{n}, F'\in\mathcal{F}_F^j(K), \\
0, & \bs x\in \mathbb R^{n}\backslash\bigcup\limits_{F'\in\mathcal{F}_F^j(K)}\mathbb R_{F'}^{n},
\end{cases}
$$
%$$
%\phi_{F'}=\frac{1}{\alpha!}h_{F'}^{|\alpha|}b_{F'}^{2m}E_K(D^{2m-j-|\alpha|}_{F', \alpha}(v))\prod_{i=1}^j\lambda_{F',i}^{\alpha_i},
%$$
where $\alpha!=\alpha_1!\cdots\alpha_j!$,
then we have
\begin{equation}\label{eq:20181025-2}
\|\phi_{F}\|_{0,K}\lesssim \sum_{F'\in\mathcal{F}_F^j(K)}h_{K}^{|\alpha|+j/2}\|D^{2m-j-|\alpha|}_{F', \alpha}(v)\|_{0,F'},
\end{equation}
\begin{align*}
\frac{\partial^{|\alpha|}\phi_{F}}{\partial\nu_{F'}^{\alpha}}\Bigg|_{F'}&=\frac{1}{\alpha!}h_{K}^{|\alpha|}b_{F'}^{2m}E_K(D^{2m-j-|\alpha|}_{F', \alpha}(v))\prod_{i=1}^j\frac{\partial^{|\alpha|}(\lambda_{F',i}^{\alpha_i})}{\partial\nu_{F'}^{\alpha}}\\
&=b_{F'}^{2m}E_K(D^{2m-j-|\alpha|}_{F', \alpha}(v)).
\end{align*}
Hence
\begin{equation}\label{eq:20181025-1}
\|D^{2m-j-|\alpha|}_{F', \alpha}(v)\|_{0,F'}^2\eqsim\Bigg(D^{2m-j-|\alpha|}_{F', \alpha}(v), \frac{\partial^{|\alpha|}\phi_{F}}{\partial\nu_{F'}^{\alpha}}\Bigg)_{F'}.
\end{equation}
For each $e\in\mathcal F^{\ell}(K)$ with $\ell=j+1,\cdots,m$, it follows from the fact $b_{F'}|_e=0$ that
$$
\frac{\partial^{|\beta|}\phi_{F}}{\partial\nu_{e}^{\beta}}\bigg|_e=0\quad\forall~\beta\in A_{\ell} \;\textrm{ with }\; |\beta|\leq m-\ell.
$$
Similarly we have for each $e\in\mathcal F^{j}(K)\backslash\mathcal{F}_F^j(K)$
$$
\frac{\partial^{|\beta|}\phi_{F}}{\partial\nu_{e}^{\beta}}\bigg|_e=0\quad\forall~\beta\in A_{j} \;\textrm{ with }\; |\beta|\leq m-j.
$$
For any $\beta\in A_{j}$, $|\beta|<|\alpha|$, since $\frac{\partial^{|\beta|}}{\partial\nu_{F'}^{\beta}}\bigg(\prod\limits_{i=1}^j\lambda_{F',i}^{\alpha_i}\bigg)\bigg|_{F'}=0$, it yields
$\displaystyle
\frac{\partial^{|\beta|}\phi_{F}}{\partial\nu_{F'}^{\beta}}\bigg|_{F'}=0.
$
For any $\beta\in A_{j}$, $|\beta|=|\alpha|$, but $\beta\neq\alpha$, noting that $\frac{\partial\lambda_{F',i}}{\partial\nu_{F',\ell}}=0$ for $i\neq\ell$, we also have
$\displaystyle
\frac{\partial^{|\beta|}\phi_{F}}{\partial\nu_{F'}^{\beta}}\bigg|_{F'}=0.
$
Based on the previous discussion, we obtain from \eqref{eq:20181025-1}, the generalized Green's identity \eqref{eq:HmGreen} and the density argument
\begin{align*}
\sum_{F'\in\mathcal{F}_F^j(K)}\|D^{2m-j-|\alpha|}_{F', \alpha}(v)\|_{0,F'}^2&\simeq(\nabla^mv, \nabla^m\phi_F)_K-((-\Delta)^mv, \phi_F)_K \\
&\quad- \sum_{\ell=1}^{j-1}\sum_{e\in\mathcal F^{\ell}(K)}\sum_{\beta\in A_{\ell}\atop|\beta|\leq m-\ell}\Big (D^{2m-\ell-|\beta|}_{e, \beta}(v), \frac{\partial^{|\beta|}\phi_F}{\partial\nu_{e}^{\beta}}\Big )_e\\
&\quad- \sum_{F'\in\mathcal{F}_F^j(K)}\sum_{\beta\in A_{j}\atop|\alpha|<|\beta|\leq m-j}\Big (D^{2m-j-|\beta|}_{F', \beta}(v), \frac{\partial^{|\beta|}\phi_F}{\partial\nu_{F'}^{\beta}}\Big )_{F'}.
\end{align*}
Employing the Cauchy-Schwarz inequality and the inverse inequality for polynomials, it follows
\begin{align*}
&\sum_{F'\in\mathcal{F}_F^j(K)}\|D^{2m-j-|\alpha|}_{F', \alpha}(v)\|_{0,F'}^2\\
\lesssim &h_K^{-m}\|\nabla^mv\|_{0,K}\|\phi_F\|_{0,K} + \|(-\Delta)^mv\|_{0,K}\|\phi_F\|_{0,K}\\
&+ \sum_{\ell=1}^{j-1}\sum_{e\in\mathcal F^{\ell}(K)}\sum_{\beta\in A_{\ell}\atop|\beta|\leq m-\ell}h_K^{-|\beta|-\ell/2}\Big \|D^{2m-\ell-|\beta|}_{e, \beta}(v)\Big\|_{0,e}\|\phi_F\|_{0,K} \\
&+ \sum_{F'\in\mathcal{F}_F^j(K)}\sum_{\beta\in A_{j}\atop|\alpha|<|\beta|\leq m-j}h_K^{-|\beta|-j/2}\Big \|D^{2m-j-|\beta|}_{F', \beta}(v)\Big\|_{0,F'}\|\phi_F\|_{0,K},
\end{align*}
which combined with \eqref{eq:20181025-2} implies \eqref{eq:inverseeq2}.
\end{proof}

\begin{lemma}
For any $K\in\mathcal T_h$, it holds
\begin{equation}\label{eq:20190605-1}
((-\Delta)^mv, v)_K\lesssim h_K^{m}\|(-\Delta)^mv\|_{0,K}S_K^{1/2}(v,v)\quad\forall~v\in\ker(\Pi^K).
\end{equation}
\end{lemma}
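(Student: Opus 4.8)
The plan is to express $((-\Delta)^mv,v)_K$ through the interior degrees of freedom \eqref{Hmdof1} and then to balance the powers of $h_K$ by a scaling argument. Write $p:=(-\Delta)^mv$, which is a polynomial because $v\in\ker(\Pi^K)\subset W_k(K)$. If $m\le k\le 2m-1$ then $p=0$ and the estimate is trivial. If $W_k(K)=V_k(K)$ (i.e. $k\ge 3m-1$), then $p\in\mathbb P_{k-2m}(K)$ directly. In the remaining case $2m\le k<3m-1$ with $v\in W_k(K)$ we only know $p\in\mathbb P_{m-1}(K)$, so I would first discard its high-degree part: since $\Pi^Kv=0$, the defining relation of $W_k(K)$ gives $(v,r)_K=0$ for every $r\in\mathbb P_{k-2m}^{\perp}(K)$, hence $(p-Q_{k-2m}^Kp,v)_K=0$ and $((-\Delta)^mv,v)_K=(Q_{k-2m}^Kp,v)_K$ with $\|Q_{k-2m}^Kp\|_{0,K}\le\|p\|_{0,K}$. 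In every case it therefore suffices to bound $|(q,v)_K|$ for some $q\in\mathbb P_{k-2m}(K)$ with $\|q\|_{0,K}\le\|(-\Delta)^mv\|_{0,K}$.

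Next I would expand $q=\sum_iq_im_i$ in the scaled monomial basis $\mathbb M_{k-2m}(K)$. By the definition $\chi_i(v)=|K|^{-1}(v,m_i)_K$ of the interior degrees of freedom in \eqref{Hmdof1}, this yields $(q,v)_K=|K|\sum_iq_i\chi_i(v)$, and the Cauchy--Schwarz inequality in $\ell^2$ gives $|(q,v)_K|\le|K|\,\|\boldsymbol q\|_{\ell^2}\big(\sum_i\chi_i(v)^2\big)^{1/2}$, where $\boldsymbol q=(q_i)$. Now three ingredients combine: first, $|K|\eqsim h_K^n$ by conditions (A1)--(A2); second, the coefficient/$L^2$-norm equivalence on $K$, namely $\|\boldsymbol q\|_{\ell^2}\eqsim h_K^{-n/2}\|q\|_{0,K}$, which is the $j=0$ instance of \eqref{eq:polynorm} and is proved by exactly the same affine scaling as in \cite[Lemma~4.1]{ChenHuang2018}; and third, the elementary bound $\sum_i\chi_i(v)^2\le\sum_{i=1}^{N_K}\chi_i(v)^2=h_K^{2m-n}S_K(v,v)$, since the interior degrees of freedom form a subset of all $N_K$ of them and in view of the definition \eqref{eq:stabilization} of $S_K$. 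Multiplying these, the powers of $h_K$ collapse as $h_K^{n}\cdot h_K^{-n/2}\cdot h_K^{(2m-n)/2}=h_K^m$, so that $|(q,v)_K|\lesssim h_K^m\|q\|_{0,K}S_K^{1/2}(v,v)\le h_K^m\|(-\Delta)^mv\|_{0,K}S_K^{1/2}(v,v)$, which is the assertion.

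There is no genuinely hard step here; the proof is essentially a bookkeeping of scaling powers. The one place that requires care is the modified space $W_k(K)$ for $2m\le k<3m-1$: there $(-\Delta)^mv$ need not lie in $\mathbb P_{k-2m}(K)$, and one must use the defining property of $W_k(K)$ together with $\Pi^Kv=0$ to project away the part of degree between $k-2m$ and $m-1$ before the monomial expansion against the interior degrees of freedom is available. A secondary, purely notational point is that the norm equivalence \eqref{eq:polynorm} is stated only for faces $F\in\mathcal F^j(K)$ with $j\ge1$, whereas it is used here on $K$ itself; the same scaling proof applies verbatim with $n-j$ replaced by $n$.
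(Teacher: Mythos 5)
Your proof is correct and follows essentially the same route as the paper: the same case split in $k$, reduction to a pairing of $v$ against a polynomial in $\mathbb P_{k-2m}(K)$, expansion in the scaled monomial basis so that the interior degrees of freedom \eqref{Hmdof1} appear, and the coefficient norm equivalence \eqref{eq:polynorm} (applied with $j=0$, as the paper also does) to collect the powers $h_K^{n}\cdot h_K^{-n/2}\cdot h_K^{(2m-n)/2}=h_K^m$. The only cosmetic difference is that for $2m\le k<3m-1$ you invoke the defining orthogonality of $W_k(K)$ directly to replace $(-\Delta)^mv$ by $Q_{k-2m}^K((-\Delta)^mv)$, whereas the paper routes the same fact through the identity \eqref{eq:20190605}; these are the same mechanism.
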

\begin{proof}
If $m\leq k\leq 2m-1$, by the definition of $W_k(K)=V_k(K)$, $(-\Delta)^mv=0$, thus \eqref{eq:20190605-1} is obvious.
Now let us prove \eqref{eq:20190605-1} for $k\geq 2m$.
When $2m\leq k<3m-1$,
since $v\in\ker(\Pi^K)$, it follows from \eqref{eq:20190605}
\begin{align*}
((-\Delta)^mv, v)_K&=((-\Delta)^mv, Q_{m-1}^Kv)_K\\
&=((-\Delta)^mv, Q_{k-2m}^Kv)_K=\left(Q_{k-2m}^K((-\Delta)^mv), v\right)_K.
\end{align*}
If $k\geq 3m-1$, then $W_k(K)=V_k(K)$, and we also have
\[
((-\Delta)^mv, v)_K=\left(Q_{k-2m}^K((-\Delta)^mv), v\right)_K.
\]
Therefore, to derive \eqref{eq:20190605-1} for $k\geq 2m$, it is sufficient to prove
\begin{equation}\label{eq:20190605-2}
\left(Q_{k-2m}^K((-\Delta)^mv), v\right)_K\lesssim h_K^{m}\|(-\Delta)^mv\|_{0,K}S_K^{1/2}(v,v)\quad\forall~v\in\ker(\Pi^K).
\end{equation}
Let $N$ be the dimension of the space $\mathbb P_{k-2m}(K)$. Then there exist constants $c_i$, $i=1,\cdots, N$ such that
\[
Q_{k-2m}^K((-\Delta)^mv)=\sum_{i=1}^{N}c_iq_i,
\]
where $\mathbb M_{k-2m}(K):=\{q_1, \cdots, q_{N}\}$, thus
$$
\left(Q_{k-2m}^K((-\Delta)^mv), v\right)_K=|K|\sum_{i=1}^{N}c_i\chi_i(v).
$$
Applying the norm equivalence on the polynomial space $\mathbb P_{k-2m}(K)$, cf. \eqref{eq:polynorm}, we get
$$
\|Q_{k-2m}^K((-\Delta)^mv)\|_{0,K}^2\eqsim h_K^{n}\sum_{i=1}^{N}c_i^2.
$$
Hence
\begin{align*}
\left(Q_{k-2m}^K((-\Delta)^mv), v\right)_K&\lesssim h_K^{n/2}\|Q_{k-2m}^K((-\Delta)^mv)\|_{0,K}\left (\sum_{i=1}^{N}\chi_i^2(v)\right )^{1/2} \\
&\lesssim h_K^{m}\|Q_{k-2m}^K((-\Delta)^mv)\|_{0,K}S_K^{1/2}(v,v),
\end{align*}
which implies \eqref{eq:20190605-2}.
\end{proof}

\begin{lemma}
For any $K\in\mathcal T_h$, it holds
\begin{equation}\label{eq:SKequiv1}
\|\nabla^m v\|_{0,K}^2\lesssim S_K(v,v)\quad\forall~v\in\ker(\Pi^K).
\end{equation}
\end{lemma}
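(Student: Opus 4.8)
The plan is to insert $u=v$ into the generalized Green's identity \eqref{eq:HmGreen}, obtaining
\begin{equation*}
\|\nabla^m v\|_{0,K}^2 = ((-\Delta)^m v, v)_K + \sum_{j=1}^m\sum_{F\in\mathcal F^j(K)}\sum_{\alpha\in A_j,\,|\alpha|\le m-j}\Big(D^{2m-j-|\alpha|}_{F,\alpha}v,\ \frac{\partial^{|\alpha|}v}{\partial\nu_{F}^{\alpha}}\Big)_F,
\end{equation*}
and then to show that every term on the right is $\lesssim \|\nabla^m v\|_{0,K}\,S_K^{1/2}(v,v)$; dividing by $\|\nabla^m v\|_{0,K}$ (the case $\nabla^m v=0$ being trivial, since then $v\in\mathbb P_{m-1}(K)\cap\ker(\Pi^K)=\{0\}$ by \eqref{eq:projectPk}) and squaring yields \eqref{eq:SKequiv1}. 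For the volume term I would simply chain the already-established bounds \eqref{eq:20190605-1} and \eqref{eq:inverseeq1}:
\begin{equation*}
((-\Delta)^m v, v)_K \lesssim h_K^m\|(-\Delta)^m v\|_{0,K}\,S_K^{1/2}(v,v) \lesssim \|\nabla^m v\|_{0,K}\,S_K^{1/2}(v,v).
\end{equation*}

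For each boundary term I would apply the Cauchy--Schwarz inequality on $F$ and estimate the two factors separately. The factor $\frac{\partial^{|\alpha|}v}{\partial\nu_{F}^{\alpha}}\big|_F$ lies in $\mathbb P_{k-(2m-j-|\alpha|)}(F)$, and its $L^2(F)$-moments against the scaled monomials $\mathbb M_{k-(2m-j-|\alpha|)}(F)$ are, up to the fixed factor $|F|^{-(n-j-|\alpha|)/(n-j)}\eqsim h_K^{-(n-j-|\alpha|)}$, exactly the degrees of freedom \eqref{Hmdof2} attached to the pair $(F,\alpha)$; denoting these by $\chi_1,\dots,\chi_{N_{F,\alpha}}$ and using the norm equivalence \eqref{eq:polynorm} between $\|\cdot\|_{0,F}$ and the $\ell^2$-norm of the coefficient vector (equivalently the uniform well-conditioning of the scaled mass matrix on $F$), one gets
\begin{equation*}
\Big\|\frac{\partial^{|\alpha|}v}{\partial\nu_{F}^{\alpha}}\Big\|_{0,F}^2 \eqsim h_K^{\,n-j-2|\alpha|}\sum_{i=1}^{N_{F,\alpha}}\chi_i(v)^2 \le h_K^{\,2m-j-2|\alpha|}\,S_K(v,v),
\end{equation*}
so $\big\|\frac{\partial^{|\alpha|}v}{\partial\nu_{F}^{\alpha}}\big\|_{0,F}\lesssim h_K^{\,m-j/2-|\alpha|}\,S_K^{1/2}(v,v)$. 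For the factor $\|D^{2m-j-|\alpha|}_{F,\alpha}v\|_{0,F}$ I would run a double induction on \eqref{eq:inverseeq2}: an outer induction on the co-dimension $j=1,\dots,m$, and, for fixed $j$, an inner induction on $|\alpha|$ decreasing from $m-j$ to $0$. The right-hand side of \eqref{eq:inverseeq2} involves only $\|\nabla^m v\|_{0,K}$, $h_K^m\|(-\Delta)^m v\|_{0,K}$, terms of strictly smaller co-dimension, and terms of the same co-dimension with strictly larger $|\beta|$, so the recursion is triangular and closes; together with \eqref{eq:inverseeq1} it gives $h_K^{\,m-|\alpha|-j/2}\|D^{2m-j-|\alpha|}_{F,\alpha}v\|_{0,F}\lesssim \|\nabla^m v\|_{0,K}$ for every admissible $(j,F,\alpha)$. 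Multiplying the two factor bounds, the powers of $h_K$ cancel exactly ($h_K^{\,|\alpha|+j/2-m}\cdot h_K^{\,m-j/2-|\alpha|}=1$), so each boundary term is $\lesssim \|\nabla^m v\|_{0,K}\,S_K^{1/2}(v,v)$; summing over the finitely many faces and multi-indices (the number of faces of $K$ being bounded by a constant thanks to (A2)) completes the estimate.

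The main obstacle is the derivative-factor bound: one must verify that the recursion encoded in \eqref{eq:inverseeq2} is genuinely triangular (outer on $j$, inner on decreasing $|\alpha|$) so that the double induction terminates, and keep meticulous track of the $h_K$-powers so that they cancel against those of the normal-derivative factor --- this bookkeeping is where an exponent slip would be easy to make. The remaining ingredients (Cauchy--Schwarz, the polynomial norm equivalence \eqref{eq:polynorm}, and the final absorption) are routine.
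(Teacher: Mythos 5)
Your overall route is the paper's: expand $\|\nabla^m v\|_{0,K}^2$ via the Green's identity \eqref{eq:HmGreen}, bound the volume term by chaining \eqref{eq:20190605-1} with \eqref{eq:inverseeq1}, and bound each face term by combining a d.o.f.-based estimate of the normal-derivative factor with an estimate of the derivative factor obtained from the triangular recursion on \eqref{eq:inverseeq2} (outer on the co-dimension $j$, inner on decreasing $|\alpha|$ — this is exactly how the paper's ``applying \eqref{eq:inverseeq2} recursively'' unwinds), the powers of $h_K$ cancelling as you note.

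There is, however, one genuine flaw in the face-term estimate as you state it: for $v\in\ker(\Pi^K)\subset W_k(K)$ the trace $\frac{\partial^{|\alpha|}v}{\partial\nu_F^{\alpha}}\big|_F$ is \emph{not} a polynomial. The definition of $V_k(K)$ and $W_k(K)$ only forces the operators $D^{2m-j-|\alpha|}_{F,\alpha}(v)$ — derivatives of order $2m-j-|\alpha|\geq m$ — to be polynomial on faces; the low-order traces $\partial^{|\alpha|}v/\partial\nu_F^{\alpha}$ with $|\alpha|\leq m-j$ are genuinely virtual (already for $m=1$ the trace $v|_F$ of a virtual function is not polynomial). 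Hence \eqref{eq:polynorm} cannot be applied to this factor, and your claimed equivalence $\|\partial^{|\alpha|}v/\partial\nu_F^{\alpha}\|_{0,F}^2\eqsim h_K^{n-j-2|\alpha|}\sum_i\chi_i(v)^2$ is unjustified: the d.o.f. only see the $L^2(F)$-projection of the normal derivative onto $\mathbb P_{k-(2m-j-|\alpha|)}(F)$, not its full $L^2(F)$-norm. The repair is short. Since the other Cauchy--Schwarz factor $D^{2m-j-|\alpha|}_{F,\alpha}(v)$ does lie in $\mathbb P_{k-(2m-j-|\alpha|)}(F)$, you may insert $Q^F_{k-(2m-j-|\alpha|)}$ in front of the normal derivative without changing the face term, and the projection's norm is controlled by the d.o.f. through the well-conditioning of the scaled mass matrix, i.e. \eqref{eq:polynorm}; equivalently, argue as the paper does: expand $D^{2m-j-|\alpha|}_{F,\alpha}(v)=\sum_i c_i q_i$ in $\mathbb M_{k-(2m-j-|\alpha|)}(F)$, so that the face term equals $|F|^{(n-j-|\alpha|)/(n-j)}\sum_i c_i\chi_i(v)\eqsim h_K^{n-j-|\alpha|}\sum_i c_i\chi_i(v)$, apply the discrete Cauchy--Schwarz inequality, and invoke \eqref{eq:polynorm} only for the polynomial factor $D^{2m-j-|\alpha|}_{F,\alpha}(v)$. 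With this correction your argument coincides with the paper's proof.
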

\begin{proof}
By the generalized Green's identity \eqref{eq:HmGreen},
\begin{equation}\label{eq:20181025-4}
\|\nabla^m v\|_{0,K}^2 = ((-\Delta)^mv, v)_K + \sum_{j=1}^m\sum_{F\in\mathcal F^j(K)}\sum_{\alpha\in A_{j}\atop|\alpha|\leq m-j}\Big (D^{2m-j-|\alpha|}_{F, \alpha}(v), \frac{\partial^{|\alpha|}v}{\partial\nu_{F}^{\alpha}}\Big )_F.\!
\end{equation}
Since $v\in W_k(K)$, we have $D^{2m-j-|\alpha|}_{F, \alpha}(v)|_F\in\mathbb P_{k-(2m-j-|\alpha|)}(F)$ for any $F\in\mathcal F^{j}(K)$. Let $N_F$ be the dimension of the space $\mathbb P_{k-(2m-j-|\alpha|)}(F)$. Then there exist constants $c_i$, $i=1,\cdots, N_F$ such that
$$
\Big (D^{2m-j-|\alpha|}_{F, \alpha}(v), \frac{\partial^{|\alpha|}v}{\partial\nu_{F}^{\alpha}}\Big )_F=h_K^{n-j-|\alpha|}\sum_{i=1}^{N_F}c_i\chi_i(v).
$$
Applying the norm equivalence on the polynomial space $\mathbb P_{k-(2m-j-|\alpha|)}(F)$, cf. \eqref{eq:polynorm}, we get
$$
\|D^{2m-j-|\alpha|}_{F, \alpha}(v)\|_{0,F}^2\eqsim h_K^{n-j}\sum_{i=1}^{N_F}c_i^2.
$$
Hence
\begin{align*}
\Big (D^{2m-j-|\alpha|}_{F, \alpha}(v), \frac{\partial^{|\alpha|}v}{\partial\nu_{F}^{\alpha}}\Big )_F&\lesssim h_K^{(n-j)/2-|\alpha|}\|D^{2m-j-|\alpha|}_{F, \alpha}(v)\|_{0,F}\left (\sum_{i=1}^{N_F}\chi_i^2(v)\right )^{1/2} \\
&\lesssim h_K^{m-|\alpha|-j/2}\|D^{2m-j-|\alpha|}_{F, \alpha}(v)\|_{0,F}S_K^{1/2}(v,v).
\end{align*}
Applying \eqref{eq:inverseeq2} recursively, it follows
\begin{align*}
\Big (D^{2m-j-|\alpha|}_{F, \alpha}(v), \frac{\partial^{|\alpha|}v}{\partial\nu_{F}^{\alpha}}\Big )_F
&\lesssim (\|\nabla^mv\|_{0,K} + h_K^{m}\|(-\Delta)^mv\|_{0,K})S_K^{1/2}(v,v).
\end{align*}
Then we derive from  \eqref{eq:20181025-4}, \eqref{eq:20190605-1} and \eqref{eq:inverseeq1}
\[
\|\nabla^m v\|_{0,K}^2 \lesssim (\|\nabla^mv\|_{0,K} + h_K^{m}\|(-\Delta)^mv\|_{0,K})S_K^{1/2}(v,v)\lesssim \|\nabla^m v\|_{0,K}S_K^{1/2}(v,v),
\]
which induce \eqref{eq:SKequiv1}.
\end{proof}

We then prove another side of the norm equivalence~\eqref{eq:SKequiv}.
\begin{lemma}
For any $K\in\mathcal T_h$ and nonnegative integer $s\leq m$, we have the local Poincar\'e inequality
\begin{equation}\label{eq:poincare}
\sum_{j=0}^{m-s}\sum_{F\in\mathcal F^j(K)}h_K^{s+j/2}\|\nabla^s v\|_{0,F}\lesssim h_K^m\|\nabla^m v\|_{0,K}\quad\forall~v\in \ker(\Pi^K).
\end{equation}
\end{lemma}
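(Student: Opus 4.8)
The plan is to reduce the face-wise Poincar\'e estimate \eqref{eq:poincare} to a trace inequality together with the scaling argument, and to handle the case $s<m$ by an induction that peels off one derivative at a time, much as in Lemma~\ref{lem:20181013-1}. First I would treat the extreme case $s=m$, where \eqref{eq:poincare} reads $\sum_{F\in\mathcal F^0(K)}\|\nabla^mv\|_{0,K}\lesssim\|\nabla^mv\|_{0,K}$ (since $\mathcal F^0(K)=K$), which is trivial. For $s<m$ the right-hand side has the factor $h_K^m$, so after dividing by $h_K^s$ it suffices to bound $\sum_{j=0}^{m-s}\sum_{F\in\mathcal F^j(K)}h_K^{j/2}\|\nabla^sv\|_{0,F}$ by $h_K^{m-s}\|\nabla^mv\|_{0,K}$. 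Observe that $\nabla^sv$ is a tensor-valued $H^{m-s}(K)$ function, and for $v\in\ker(\Pi^K)$ the defining relations \eqref{eq:H2projlocal1}--\eqref{eq:H2projlocal2} of $\Pi^K$ (with $\Pi^Kv=0$) give $\sum_{F\in\mathcal F^{r}(K)}Q_0^{F}(\nabla^{m-r}v)=0$ for $r=1,\dots,m$; equivalently the average of $\nabla^{m-r}v$ over $\bigcup_{F\in\mathcal F^r(K)}F$ vanishes, which is exactly the vanishing-mean condition needed to run a Poincar\'e--Friedrichs argument on the tensor field.

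The core step is an induction on $t:=m-s$ going upward from $t=0$. For $t=0$ the claim is the trivial identity above. For the inductive step I would fix $F\in\mathcal F^j(K)$ with $0\le j\le t$ and apply the $L^2$ trace inequality \eqref{L2trace} on a chain of nested faces: pick $e_0=K\supset e_1\supset\cdots\supset e_j=F$ with $e_i\in\mathcal F^1(e_{i-1})$, and estimate $h_K^{i/2}\|\nabla^sv\|_{0,e_i}\lesssim h_K^{(i-1)/2}\|\nabla^sv\|_{0,e_{i-1}}+h_K^{(i+1)/2}\|\nabla^{s+1}v\|_{0,e_{i-1}}$, which unwinds (using $h_F\eqsim h_K$ from (A1)--(A2)) to
\[
h_K^{j/2}\|\nabla^sv\|_{0,F}\lesssim \|\nabla^sv\|_{0,K}+h_K\|\nabla^{s+1}v\|_{0,K}+\cdots+h_K^{j/2}\|\nabla^{s+j}v\|_{0,\text{lower faces}}.
\]
Rather than carry the low-dimensional terms, a cleaner route is: bound $h_K^{j/2}\|\nabla^sv\|_{0,F}\lesssim \|\nabla^sv\|_{0,K}+h_K\,\||\nabla^sv|\|_{1,K}\lesssim\|\nabla^sv\|_{0,K}+h_K\|\nabla^{s+1}v\|_{0,K}$ by iterating the trace inequality at most $j\le t$ times, each time absorbing a factor $h_K$; then apply the scalar Poincar\'e inequality to $\nabla^sv$ (legitimate since $Q_0^{\mathcal F^{t}(K)}(\nabla^{s}v)=Q_0^{\mathcal F^{m-s}(K)}(\nabla^{m-(m-s)}v)=0$ by \eqref{eq:H2projlocal2} with $r=m-s$) to get $\|\nabla^sv\|_{0,K}\lesssim h_K\|\nabla^{s+1}v\|_{0,K}$, reducing everything to the $s+1$ case, i.e. $t-1$. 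Summing over $j=0,\dots,t$ and over $F\in\mathcal F^j(K)$ (a fixed finite number depending only on $K$'s combinatorics, controlled by the chunkiness parameter and $\mathcal T_h^*$) and feeding in the inductive hypothesis $\|\nabla^{s+1}v\|_{0,K}\lesssim$ (boundary terms, already bounded) yields \eqref{eq:poincare}.

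The main obstacle I anticipate is justifying the Poincar\'e inequality for the tensor field $\nabla^sv$ with the correct vanishing-mean normalization: \eqref{eq:H2projlocal2} controls only $\sum_{F\in\mathcal F^{r}(K)}Q_0^F(\nabla^{m-r}v)$, i.e. the mean over the \emph{union} of codimension-$r$ faces, not over $K$ itself; so I must either (i) use a boundary-mean version of Poincar\'e ($\|w\|_{0,K}\lesssim h_K|w|_{1,K}$ whenever $\int_{\partial^{(r)}K}w=0$, which follows from \eqref{L2trace} plus a compactness/Bramble--Hilbert argument on the reference configuration built from $\mathcal T_h^*$), or (ii) first establish the trace bound $h_K^{j/2}\|\nabla^sv\|_{0,F}\lesssim\|\nabla^sv\|_{0,K}+h_K\|\nabla^{s+1}v\|_{0,K}$ unconditionally and then invoke the $r=m-s$ relation of \eqref{eq:H2projlocal2} to kill $\|\nabla^sv\|_{0,K}$. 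Route (ii) is cleaner and is the one I would write up; the constants depend on $\rho_K$, $k$, $m$, $n$ and the shape-regularity/quasi-uniformity of $\mathcal T_h^*$ exactly as claimed, since all trace and Poincar\'e constants used are the ones from \eqref{L2trace} and the Bramble--Hilbert lemma \eqref{eq:Bramble-Hilbert} on $\mathcal T_h^*$.
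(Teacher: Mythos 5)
Your overall strategy (downward induction peeling off one derivative, the trace inequality \eqref{L2trace}, and the kernel condition \eqref{eq:H2projlocal2} with $r=m-s$) is the same skeleton as the paper's argument, but the two simplifications you introduce to avoid ``carrying the low-dimensional terms'' are exactly where the proof breaks. First, your ``cleaner'' trace bound $h_K^{j/2}\|\nabla^s v\|_{0,F}\lesssim \|\nabla^s v\|_{0,K}+h_K\|\nabla^{s+1}v\|_{0,K}$ is false for faces of codimension $j\ge 2$: iterating \eqref{L2trace} from $K$ down to $F$ produces, already at the second step, the tangential derivative of $\nabla^s v$ on an intermediate codimension-one face, and controlling that again by \eqref{L2trace} brings in $\nabla^{s+2}v$, and so on; a trace on a codimension-$j$ set simply cannot be bounded with only one extra derivative (an $H^1$ function has no trace on a codimension-two face). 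Your own unwound chain, which keeps the terms $h_K^{1+\ell/2}\|\nabla^{s+1}v\|_{0,e}$ on intermediate faces $e$ of codimension $\ell\le j-1$, is the correct statement --- it is precisely inequality \eqref{eq:20181103} in the paper --- and those face terms must be retained and absorbed by the induction at level $s+1$, not replaced by $h_K\|\nabla^{s+1}v\|_{0,K}$.

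Second, the Poincar\'e step $\|\nabla^s v\|_{0,K}\lesssim h_K\|\nabla^{s+1}v\|_{0,K}$, justified only by $\sum_{e\in\mathcal F^{m-s}(K)}Q_0^e(\nabla^s v)=0$, fails whenever $m-s\ge 2$: the mean over a codimension-$(m-s)$ skeleton is not a bounded functional on $H^1(K)$ (already for $m=2$, $s=0$, $n=2$ the constraint involves vertex values), so no Bramble--Hilbert or compactness argument --- and there is in any case no affine reference polytope to compactify on, as the paper stresses --- can yield such an inequality with a single extra derivative; a constant plus a smooth spike concentrated near $e$, small in $H^1$ but shifting the skeleton mean, defeats it. The paper never invokes a skeleton-mean Poincar\'e: it splits $Q_0^e(\nabla^s v)=Q_0^K(\nabla^s v)+Q_0^e\bigl(\nabla^s v-Q_0^K(\nabla^s v)\bigr)$, so the only Poincar\'e-type estimate used is the volume-mean bound \eqref{eq:PKerror}, while the correction on $e$ and the term $\|\nabla^s v-Q_0^K(\nabla^s v)\|_{0,F}$ are handled by recursive applications of \eqref{L2trace}, again producing the face sums of $\nabla^{s+1}v$ in \eqref{eq:20181103}, which are then closed by iterating over $s$. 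If you restore the intermediate-face terms in your chain and replace the skeleton-mean Poincar\'e by this splitting, your induction closes and essentially reproduces the paper's proof; as written, both reductions are genuine gaps for $m-s\ge 2$ (they are harmless only at the first level $s=m-1$).
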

\begin{proof}
It is sufficient to prove
\begin{equation}\label{eq:20181103}
\sum_{j=0}^{m-s}\sum_{F\in\mathcal F^j(K)}h_K^{s+j/2}\|\nabla^s v\|_{0,F}\lesssim \sum_{\ell=0}^{m-s-1}\sum_{e\in\mathcal F^{\ell}(K)}h_K^{s+1+\ell/2}\|\nabla^{s+1} v\|_{0,e},
\end{equation}
for $s=0,1,\cdots,m-1$.
Thanks to \eqref{eq:H2projlocal2}, it follows
\begin{align*}
&~h_K^{j/2}\|\nabla^s v\|_{0,F}=h_K^{j/2}\bigg\|\nabla^s v-\frac{1}{\#\mathcal F^{m-s}(K)}\sum_{e\in\mathcal F^{m-s}(K)}Q_0^{e}(\nabla^{s}v)\bigg\|_{0,F} \\
\lesssim& ~h_K^{j/2}\sum_{e\in\mathcal F^{m-s}(K)}\big\|\nabla^s v-Q_0^{e}(\nabla^{s}v)\big\|_{0,F} \\
= &~h_K^{j/2}\sum_{e\in\mathcal F^{m-s}(K)}\big\|\nabla^s v-Q_0^{K}(\nabla^{s}v)-Q_0^{e}(\nabla^{s}v-Q_0^{K}(\nabla^{s}v))\big\|_{0,F} \\
\lesssim &~h_K^{j/2}\big\|\nabla^s v-Q_0^{K}(\nabla^{s}v)\big\|_{0,F}+\sum_{e\in\mathcal F^{m-s}(K)}h_K^{(m-s)/2}\big\|Q_0^{e}(\nabla^{s}v-Q_0^{K}(\nabla^{s}v))\big\|_{0,e} \\
\leq &~h_K^{j/2}\big\|\nabla^s v-Q_0^{K}(\nabla^{s}v)\big\|_{0,F}+\sum_{e\in\mathcal F^{m-s}(K)}h_K^{(m-s)/2}\big\|\nabla^{s}v-Q_0^{K}(\nabla^{s}v)\big\|_{0,e}.
\end{align*}
On the other hand, applying the trace inequality \eqref{L2trace} recursively, we get from \eqref{eq:PKerror}
\begin{align*}
&~h_K^{j/2}\big\|\nabla^s v-Q_0^{K}(\nabla^{s}v)\big\|_{0,F}+\sum_{e\in\mathcal F^{m-s}(K)}h_K^{(m-s)/2}\big\|\nabla^{s}v-Q_0^{K}(\nabla^{s}v)\big\|_{0,e} \\
\lesssim &~\big\|\nabla^s v-Q_0^{K}(\nabla^{s}v)\big\|_{0,K}+\sum_{\ell=0}^{m-s-1}\sum_{e\in\mathcal F^{\ell}(K)}h_K^{1+\ell/2}\|\nabla^{s+1} v\|_{0,e} \\
\lesssim &~\sum_{\ell=0}^{m-s-1}\sum_{e\in\mathcal F^{\ell}(K)}h_K^{1+\ell/2}\|\nabla^{s+1} v\|_{0,e}.
\end{align*}
Combining the last two inequalities yields
\[
h_K^{j/2}\|\nabla^s v\|_{0,F}\lesssim \sum_{\ell=0}^{m-s-1}\sum_{e\in\mathcal F^{\ell}(K)}h_K^{1+\ell/2}\|\nabla^{s+1} v\|_{0,e},
\]
which indicates \eqref{eq:20181103}. Thus the Poincar\'e inequality \eqref{eq:poincare} holds.
\end{proof}

\begin{lemma}
For any $K\in\mathcal T_h$, it holds
\begin{equation}\label{eq:SKequiv2}
S_K(v,v)\lesssim \|\nabla^m v\|_{0,K}^2\quad\forall~v\in \ker(\Pi^K).
\end{equation}
\end{lemma}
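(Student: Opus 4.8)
The plan is to estimate each degree of freedom separately against a suitably scaled $L^2$-norm of $\nabla^m v$, and then to sum the contributions; the number $N_K$ of functionals in \eqref{Hmdof1}--\eqref{Hmdof2} is bounded in terms of $k$, $m$, $n$ and the number of faces of $K$ of each dimension, the latter being controlled by the mesh conditions (A1)--(A2), so only a constant is lost in the summation. Beyond the Cauchy--Schwarz inequality, the sole analytic ingredient I would use is the local Poincar\'e inequality \eqref{eq:poincare}, which is already available for $v\in\ker(\Pi^K)$.

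First I would handle an interior degree of freedom $\chi(v)=|K|^{-1}(v,q)_K$ with $q\in\mathbb M_{k-2m}(K)$. Since $|q|\le 1$ on $K$, one has $\|q\|_{0,K}\le|K|^{1/2}$ and hence $|\chi(v)|\le|K|^{-1/2}\|v\|_{0,K}$. Combining this with $|K|\eqsim h_K^n$ and the case $s=0$, $j=0$ of \eqref{eq:poincare}, namely $\|v\|_{0,K}\lesssim h_K^m\|\nabla^m v\|_{0,K}$, gives $h_K^{n-2m}\chi(v)^2\lesssim h_K^{-2m}\|v\|_{0,K}^2\lesssim\|\nabla^m v\|_{0,K}^2$.

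Next I would treat a face degree of freedom attached to $F\in\mathcal F^j(K)$, $\alpha\in A_j$ with $|\alpha|\le m-j$ and $1\le j\le m$. With $q\in\mathbb M_{k-(2m-j-|\alpha|)}(F)$ one again has $\|q\|_{0,F}\le|F|^{1/2}$, so
\[
|\chi(v)|\lesssim |F|^{-(n-j-|\alpha|)/(n-j)}|F|^{1/2}\,\|\nabla^{|\alpha|}v\|_{0,F}\lesssim h_K^{|\alpha|-(n-j)/2}\|\nabla^{|\alpha|}v\|_{0,F},
\]
using $|F|\eqsim h_K^{n-j}$ from the mesh conditions; this is precisely the point where the scaling weight in \eqref{Hmdof2} does its job. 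Hence $h_K^{n-2m}\chi(v)^2\lesssim h_K^{j-2m+2|\alpha|}\|\nabla^{|\alpha|}v\|_{0,F}^2$. Because $|\alpha|\le m-j$, the face $F$ lies among the faces $\mathcal F^j(K)$ with $j\le m-|\alpha|$ that appear on the left-hand side of \eqref{eq:poincare} with $s=|\alpha|$, which yields $\|\nabla^{|\alpha|}v\|_{0,F}\lesssim h_K^{m-|\alpha|-j/2}\|\nabla^m v\|_{0,K}$; inserting this, all powers of $h_K$ cancel and one obtains $h_K^{n-2m}\chi(v)^2\lesssim\|\nabla^m v\|_{0,K}^2$. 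Summing over the $N_K$ functionals would then give \eqref{eq:SKequiv2}.

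I do not expect a genuine obstacle here: once \eqref{eq:poincare} is in hand the statement is essentially bookkeeping of scaling exponents, the only subtlety being to keep track of the weight $|F|^{(n-j-|\alpha|)/(n-j)}$ in \eqref{Hmdof2} and to notice that the constraint $|\alpha|\le m-j$ places $\mathcal F^j(K)$ within the scope of \eqref{eq:poincare} with $s=|\alpha|$. This is in pleasant contrast with the reverse inequality \eqref{eq:SKequiv1}, whose proof required the bubble-function machinery and the polynomial inverse estimates.
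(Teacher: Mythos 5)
Your proposal is correct and follows essentially the same route as the paper: each degree of freedom is bounded by Cauchy--Schwarz together with the scalings $|F|\eqsim h_K^{n-j}$, $\|q\|_{0,F}\lesssim |F|^{1/2}$, yielding $h_K^{n-2m}\chi(v)^2\lesssim h_K^{2|\alpha|-2m+j}\|\nabla^{|\alpha|}v\|_{0,F}^2$, and then the local Poincar\'e inequality \eqref{eq:poincare} with $s=|\alpha|$ (valid since $|\alpha|\le m-j$ puts $F\in\mathcal F^j(K)$ within its scope) cancels all powers of $h_K$. The paper's only cosmetic difference is inserting and immediately discarding the $L^2$-projection $Q^F_{k-(2m-j-|\alpha|)}$ of the normal derivative before the same Poincar\'e step.
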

\begin{proof}
Due to the definition of the degrees of freedom, we have
\begin{align}
S_K(v,v)&=h_K^{n-2m}\sum_{i=1}^{N_K}\chi_i^2(v)\notag\\
%&=\sum_{j=0}^m\sum_{F\in\mathcal F^j(K)}\sum_{\alpha\in A_{j}\atop|\alpha|\leq m-j}h_K^{2|\alpha|-2m+n}\left(Q^F_{k-(2m-j-|\alpha|)}\Big(\frac{\partial^{|\alpha|}v}{\partial\nu_{F}^{\alpha}}\Big)\right)^2\notag\\
&\lesssim\sum_{j=0}^m\sum_{F\in\mathcal F^j(K)}\sum_{\alpha\in A_{j}\atop|\alpha|\leq m-j}h_K^{2|\alpha|-2m+j}\Big\|Q^F_{k-(2m-j-|\alpha|)}\Big(\frac{\partial^{|\alpha|}v}{\partial\nu_{F}^{\alpha}}\Big)\Big\|_{0,F}^2\notag\\
&\leq\sum_{j=0}^m\sum_{F\in\mathcal F^j(K)}\sum_{\alpha\in A_{j}\atop|\alpha|\leq m-j}h_K^{2|\alpha|-2m+j}\Big\|\frac{\partial^{|\alpha|}v}{\partial\nu_{F}^{\alpha}}\Big\|_{0,F}^2\notag\\
&\leq\sum_{j=0}^m\sum_{F\in\mathcal F^j(K)}\sum_{\alpha\in A_{j}\atop|\alpha|\leq m-j}h_K^{2|\alpha|-2m+j}\|\nabla^{|\alpha|}v\|_{0,F}^2,\notag%\label{eq:20181025-3}
\end{align}
which together with the Poincar\'e inequality \eqref{eq:poincare} implies \eqref{eq:SKequiv2}.
%Take some $e_i\in \mathcal F^i(K)$ for $i=1,\cdots, j$ such that $e_i\in\mathcal F^1(e_{i-1})$ with $e_0=K$ and $e_j=F$. Using the trace inequality \eqref{L2trace}, it holds
%\begin{align*}
%h_K^{j}\|\nabla^{|\alpha|}v\|_{0,e_j}^2 & \lesssim h_K^{j-1}\|\nabla^{|\alpha|}v\|_{0,e_{j-1}}^2 + h_K^{j+1}\|\nabla^{|\alpha|+1}v\|_{0,e_{j-1}}^2 \\
%& \lesssim h_K^{j-2}\|\nabla^{|\alpha|}v\|_{0,e_{j-2}}^2 + h_K^{j}\|\nabla^{|\alpha|+1}v\|_{0,e_{j-2}}^2 + h_K^{j+2}\|\nabla^{|\alpha|+2}v\|_{0,e_{j-2}}^2.
%\end{align*}
%Continuing to apply the trace inequality \eqref{L2trace}, we get
%$$
%h_K^{j}\|\nabla^{|\alpha|}v\|_{0,e_j}^2 \lesssim \sum_{i=0}^jh_K^{2i}\|\nabla^{|\alpha|+i}v\|_{0,K}^2.
%$$
%Then it follows from the Poincar\'e inequality
%$$
%h_K^{j}\|\nabla^{|\alpha|}v\|_{0,e_j}^2 \lesssim h_K^{2(m-|\alpha|)}\|\nabla^{m}v\|_{0,K}^2,
%$$
%which together with \eqref{eq:20181025-3} indicates \eqref{eq:SKequiv2}.
\end{proof}

At last, combining \eqref{eq:SKequiv1} and \eqref{eq:SKequiv2} gives the norm equivalence \eqref{eq:SKequiv}, cf. Theorem \ref{th:normequivalence}.
%\begin{theorem}
%Assume the mesh $\mathcal T_h$ satisfies conditions (A1) and (A2). For any $K\in\mathcal T_h$, the following norm equivalence holds
%\[
%S_K(v,v)\eqsim \|\nabla^m v\|_{0,K}^2\quad\forall~v\in \ker(\Pi^K),
%\]
%where \textcolor[rgb]{1.00,0.00,0.00}{the constant is independent of $h_K$, but may depend on the chunkiness parameter $\rho_K$, the degree of polynomials $k$, the order of differentiation $m$, the dimension of space $n$, and the shape regularity and quasi-uniform constants of the virtual triangulation $\mathcal T^*_h$}.
%\end{theorem}

\section{Examples of Green's Formula}\label{sec:appendixB}

Take $K\in\mathcal T_h$.
The explicit expression of \eqref{eq:HmGreen} for $m=1$ with $n\geq m$ is no more than \eqref{eq:H1Green}, i.e.,
\begin{equation*}%\label{eq:H1Green}
(\nabla u, \nabla v)_K=-(\Delta u, v)_K +\sum_{F\in \mathcal F^{1}(K)}(\frac{\partial u}{\partial\nu_{K,F}}, v)_F\quad\forall~u\in H^2(K),\; v\in H^1(K).
\end{equation*}
And the explicit expression of \eqref{eq:HmGreen} for $m=2$ with $n\geq m$ is exactly \eqref{eq:H2Green}, i.e.,
for any $u\in H^4(K)$ and $v\in H^2(K)$, it holds
\begin{align*}
(\nabla^2u, \nabla^2v)_K &= (\Delta^2u, v)_K + \sum_{F\in\mathcal F^{1}(K)}\Big [(M_{\nu\nu}(u), \frac{\partial v}{\partial\nu_{F,1}})_{F} - (Q_{\nu}(u), v)_{F}\Big ] \notag \\
&\qquad+\sum_{e\in\mathcal F^{2}(K)}\sum_{F\in\mathcal F^{1}(K)\cap \partial^{-1}e}(\nu_{F,e}^{\intercal}M_{\nu t}(u), v)_{e}. %\label{eq:H2Green}
\end{align*}

When $m=n=3$, the explicit expression of \eqref{eq:HmGreen} is that for any $u\in H^{6}(K)$ and $v\in H^3(K)$,
\begin{align*}
&\,(\nabla^3u, \nabla^3v)_K+(\Delta^3u, v)_K\\
=&\sum_{F\in\mathcal F^1(K)}\left(\nu_{K,F}^{\intercal}\div^2(\nabla^3u)+\div_F(\div(\nabla^3u)\nu_{K,F})+\div_F\div_F((\nabla^3u)\nu_{K,F}), v\right)_F \\
&- \sum_{F\in\mathcal F^1(K)}\left(\nu_{F,1}^{\intercal}(\div_F((\nabla^3u)\nu_{K,F}))+\div_F(((\nabla^3u)\nu_{K,F})\nu_{F,1}), \frac{\partial v}{\partial\nu_{F,1}}\right)_F \\
&- \sum_{F\in\mathcal F^1(K)}\left(\nu_{F,1}^{\intercal}\div(\nabla^3u)\nu_{K,F}, \frac{\partial v}{\partial\nu_{F,1}}\right)_F  \\
&+ \sum_{F\in\mathcal F^1(K)}\left(\nu_{F,1}^{\intercal}((\nabla^3u)\nu_{K,F})\nu_{F,1}, \frac{\partial^2 v}{\partial\nu_{F,1}^2}\right)_F \\
&- \sum_{F\in\mathcal F^1(K)}\sum_{e\in\mathcal F^1(F)}\left(\nu_{F,e}^{\intercal}(\div_F((\nabla^3u)\nu_{K,F}))+\div_e(((\nabla^3u)\nu_{K,F})\nu_{F,e}), v\right)_e \\
&- \sum_{F\in\mathcal F^1(K)}\sum_{e\in\mathcal F^1(F)}(\nu_{F,e}^{\intercal}\div(\nabla^3u)\nu_{K,F}, v)_e \\
&+\sum_{F\in\mathcal F^1(K)}\sum_{e\in\mathcal F^1(F)}\sum_{i=1}^2\left(\left(\nu_{e,i}+ (\nu_{e,i}^{\intercal}\nu_{F,1})\nu_{F,1}^{\intercal}\right)((\nabla^3u)\nu_{K,F})\nu_{F,e}, \frac{\partial v}{\partial\nu_{e,i}}\right)_e \\
&+\sum_{F\in\mathcal F^1(K)}\sum_{e\in\mathcal F^1(F)}\sum_{\delta\in\mathcal F^1(e)}\left(\nu_{e,\delta}^{\intercal}((\nabla^3u)\nu_{K,F})\nu_{F,e}\right)(\delta)v(\delta).
\end{align*}
Consider the lowest order case $k=m=3$. The last identity will be reduced to
\begin{align*}
&\,(\nabla^3v, \nabla^3q)_K\\
=& \sum_{F\in\mathcal F^1(K)}\left(\frac{\partial^2 v}{\partial\nu_{F,1}^2}, \nu_{F,1}^{\intercal}((\nabla^3q)\nu_{K,F})\nu_{F,1}\right)_F \\
&+\sum_{F\in\mathcal F^1(K)}\sum_{e\in\mathcal F^1(F)}\sum_{i=1}^2\left(\frac{\partial v}{\partial\nu_{e,i}}, \left(\nu_{e,i}+ (\nu_{e,i}^{\intercal}\nu_{F,1})\nu_{F,1}^{\intercal}\right)((\nabla^3q)\nu_{K,F})\nu_{F,e}\right)_e \\
&+\sum_{F\in\mathcal F^1(K)}\sum_{e\in\mathcal F^1(F)}\sum_{\delta\in\mathcal F^1(e)}v(\delta)\left(\nu_{e,\delta}^{\intercal}((\nabla^3q)\nu_{K,F})\nu_{F,e}\right)(\delta)
\end{align*}
for any $v\in H^3(K)$ and $q\in \mathbb P_3(K)$, which will be used to compute the projector $\Pi^K: H^3(K)\to\mathbb P_3(K)$. And the degrees of freedom are
\[
\left(\frac{\partial^{2}v}{\partial\nu_{F, 1}^{2}}, 1\right)_F,\quad \left(\frac{\partial v}{\partial\nu_{e, 1}}, 1\right)_e,\quad \left(\frac{\partial v}{\partial\nu_{e, 2}}, 1\right)_e,\quad v(\delta)
\]
on each $F\in\mathcal F^{1}(K)$, $e\in\mathcal F^{2}(K)$, and $\delta\in\mathcal F^{3}(K)$.

%\bibliographystyle{abbrv}
%\bibliography{/Dropbox/Math/biblib/library,./paper}
%\bibliography{E:/work/paperLibrary/paper}
%\bibliography{./refs}

\end{document}